\numberwithin{equation}{section}
\newtheorem{theorem}[equation]{Theorem}
\newtheorem{proposition}[equation]{Proposition}
\newtheorem{lemma}[equation]{Lemma}
\newtheorem{corollary}[equation]{Corollary}
\theoremstyle{definition}
\newtheorem{rmk}[equation]{Remark}
\newenvironment{remark}[1][]{\begin{rmk}[#1] \pushQED{\qed}}{\popQED \end{rmk}}
\newtheorem{eg}[equation]{Example}
\newenvironment{example}[1][]{\begin{eg}[#1] \pushQED{\qed}}{\popQED \end{eg}}
\newtheorem{defnaux}[equation]{Definition}
\newenvironment{definition}[1][]{\begin{defnaux}[#1]\pushQED{\qed}}{\popQED \end{defnaux}}
\newcommand{\arxiv}[1]{\href{http://arxiv.org/abs/#1}{{\tiny\tt arXiv:#1}}}
\newcommand{\DOI}[1]{\href{http://doi.org/#1}{\color{purple}{\tiny\tt DOI:#1}}}
\newcommand{\sB}{\mathscr{B}}
\newcommand{\bC}{\mathbf{C}}
\newcommand{\cC}{\mathcal{C}}
\newcommand{\sE}{\mathscr{E}}
\newcommand{\bL}{\mathbf{L}}
\newcommand{\rL}{\mathrm{L}}
\newcommand{\cM}{\mathcal{M}}
\newcommand{\bN}{\mathbf{N}}
\newcommand{\bQ}{\mathbf{Q}}
\newcommand{\bR}{\mathbf{R}}
\newcommand{\bS}{\mathbf{S}}
\newcommand{\cS}{\mathcal{S}}
\newcommand{\fS}{\mathfrak{S}}
\newcommand{\sT}{\mathscr{T}}
\newcommand{\bZ}{\mathbf{Z}}
\newcommand{\ra}{\mathrm{a}}
\newcommand{\rb}{\mathrm{b}}
\newcommand{\rc}{\mathrm{c}}
\newcommand{\rd}{\mathrm{d}}
\newcommand{\rf}{\mathrm{f}}
\newcommand{\fp}{\mathfrak{p}}
\let\ol\overline
\let\ul\underline
\let\lbb\llbracket
\let\rbb\rrbracket
\newcommand{\defn}[1]{\emph{#1}}
\renewcommand{\phi}{\varphi}
\renewcommand{\emptyset}{\varnothing}
\DeclareMathOperator{\End}{End}
\DeclareMathOperator{\Aut}{Aut}
\DeclareMathOperator{\Hom}{Hom}
\DeclareMathOperator{\Rep}{Rep}
\newcommand{\id}{\mathrm{id}}
\newcommand{\bone}{\mathbf{1}}
\newcommand{\bbone}{\mathds{1}}
\newcommand{\myuline}[1]{%
  \uline{\phantom{#1}}%
  \llap{\contour{white}{#1}}%
}
\DeclareMathOperator{\uRep}{\text{\myuline{\rm Rep}}}
\DeclareMathOperator{\uPerm}{\ul{Perm}}
\title{Measures for the colored circle}
\author{Andrew Snowden}
\address{Department of Mathematics, University of Michigan, Ann Arbor, MI}
\email{\href{mailto:asnowden@umich.edu}{asnowden@umich.edu}}
\urladdr{\url{http://www-personal.umich.edu/~asnowden/}}
\date{February 16, 2023}
\begin{document}

\begin{abstract}
In recent work with Harman, we introduced a new notion of measure for oligomorphic groups, and showed how they can be used to produce interesting tensor categories. Determining the measures for an oligomorphic group is (in our view) an important and difficult combinatorial problem, which has only been solved in a handful of cases. The purpose of this paper is to solve this problem for a certain infinite family of oligomorphic groups, namely, the automorphism group of the $n$-colored circle (for each $n \ge 1$).
\end{abstract}

\maketitle
\tableofcontents

\section{Introduction}

\subsection{Background}

Suppose $G$ is an algebraic group (or supergroup) over a field $k$. One can then consider the category $\Rep(G)$ of finite dimensional algebraic representations of $G$, which comes with a tensor product. This category satisfies the following conditions:
\begin{enumerate}
\item It is abelian and every object has finite length (i.e., a finite composition series).
\item The space of maps between two objects is a finite dimensional $k$-vector space.
\item Every object has a dual (i.e., the category is rigid).
\item If $\bbone$ denotes the unit object for tensor product (i.e., the trivial representation) then $\End(\bbone)=k$.
\end{enumerate}
A \defn{pre-Tannakian category} is a $k$-linear symmetric tensor category satisfying these axioms (see \cite[\S 2.1]{ComesOstrik} for more details). An important problem within the field of tensor categories is to understand the extent to which pre-Tannakian categories go beyond classical representation categories.

Deligne \cite{Deligne} gave the first examples of pre-Tannakian categories not of the form $\Rep(G)$: he constructed a 1-parameter family of pre-Tannakian categories $\uRep(\fS_t)$ by ``interpolating'' the representation categories $\Rep(\fS_n)$ of symmetric groups. Knop \cite{Knop, Knop2} generalized Deligne's construction and interpolated other families of finite groups, such as finite linear groups. There has been much subsequent work in this direction, e.g., \cite{ComesOstrik1, ComesOstrik, CW, EntovaAizenbudHeidersdorf, Harman, Harman2}.

Recently, in joint work with Harman \cite{repst}, we gave a new construction of pre-Tannakian categories. Recall that an \defn{oligomorphic group} is a permutation group $(G, \Omega)$ such that $G$ has finitely many orbits on $\Omega^n$ for all $n \ge 0$. We introduced a notion of \defn{measure} for an oligomorphic group (reviewed in \S \ref{s:olig} below). Given a $k$-valued measure $\mu$ for $G$, we constructed a $k$-linear rigid tensor category $\uPerm(G; \mu)$ of ``permutation modules.'' Under certain hypotheses, we showed that this category admits an abelian envelope $\uRep(G; \mu)$ that is pre-Tannakian.

The simplest example of an oligomorphic group is the infinite symmetric group $\fS$. In this case, we showed in \cite{repst} that there is a 1-parameter family of measures $\mu_t$, and that the resulting category $\uRep(\fS; \mu_t)$ coincides with Deligne's interpolation category $\uRep(\fS_t)$. More generally, in all known cases of interpolation (such as those considered by Knop), the sequence of finite groups has an oligomorphic limit, and our theory yields the interpolation categories previously constructed.

In \cite{repst}, we also considered a handful of oligomorphic groups that do not arise as limits of finite groups. For example, we analyzed the the oligomorphic group $\Aut(\bR,<)$ of order-preserving self-bijections of the real line. We showed that this group admits essentially four measures, and that one of these measures leads to a pre-Tannakian category. This category was studied in detail in \cite{delannoy}, where it was named the \defn{Delannoy category}; we found that it possess several remarkable properties (e.g., the Adams operations are trivial on its Grothendieck group). A related case will be treated in the forthcoming paper \cite{delannoy2}.

The purpose of this paper is to add to the list of examples from \cite{repst}: we determine the measures for a certain infinite family of oligomorphic groups (which cannot be realized as limits of finite groups). This leads to a large number of new rigid tensor categories. We do not know if these categories have abelian envelopes.

\begin{remark}
If $\cC$ is a class of finite relational structures (such as graphs, total orders, etc.), one can sometimes form the \defn{Fra\"iss\'e limit} $\Omega$ of $\cC$, which is a countable structure that has an important homogeneity property. The automorphism group $G$ of $\Omega$ is often oligomorphic, and this construction is the main source of oligomorphic groups. We showed in \cite{repst} that a measure for $G$ is (essentially) a rule assigning to each member of $\cC$ a value in $k$ such that certain identities hold. Thus understanding measures is really a combinatorial problem, and indeed, most of the work in this paper is combinatorially in nature.
\end{remark}

\subsection{Statement of results}

Let $\bS$ be a countable set equipped with an everywhere dense cyclic order; for example, one can take $\bS$ to be the roots of unity in the complex unit circle. Let $\Sigma$ be a non-empty finite set and let $\sigma \colon \bS \to \Sigma$ be a function such that $\sigma^{-1}(a)$ is dense for each $a \in \Sigma$. We regard $\sigma$ as a coloring of $\bS$. It turns out that $\bS$ is the Fra\"iss\'e limit of the class of finite sets equipped with a cyclic order and $\Sigma$-coloring (see Proposition~\ref{prop:S-homo}); in particular, up to isomorphism, $\bS$ is independent of the choice of coloring $\sigma$. It also follows that the automorphism group $G$ of $\bS$ is oligomorphic.

The goal of this paper is to classify the measures for $G$. It is easy to see that there is a universal measure valued in a certain ring $\Theta(G)$, and the problem of classifying measures for $G$ amounts to computing the ring $\Theta(G)$. This is what our main theorem accomplishes:

\begin{theorem} \label{mainthm}
Given a directed tree $T$ with edges labeled by $\Sigma$, there is an associated $\bZ$-valued measure $\mu_T$ for $G$. The product of these measures (over isomorphism classes of trees) defines a ring isomorphism $\Theta(G) \to \prod_T \bZ$. In particular, $\Theta(G) \cong \bZ^N$, where $N=2^n \cdot (n+1)^{n-2}$ and $n=\# \Sigma$.
\end{theorem}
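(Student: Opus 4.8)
The plan is to compute the ring $\Theta(G)$ from an explicit finite presentation and then to recognize the answer combinatorially. Using the Fra\"iss\'e description of $\bS$ (Proposition~\ref{prop:S-homo}) together with the formalism of \cite{repst}, a measure for $G$ is a rule on finite $\Sigma$-colored cyclic orders satisfying the measure identities. By the homogeneity of $\bS$, the type of a one-point extension of a finite configuration $A$ depends only on which arc of $A$ the new point enters and on its color; hence a measure is determined by finitely many ``basic masses'' (roughly, the mass of each arc-type and each color-type), and the measure axioms — normalization, additivity when an arc is subdivided by a colored point, and the single ``global'' identity obtained by running once around the circle — become an explicit finite system of polynomial equations, which I would record as a presentation of $\Theta(G)$. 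I would note in particular that the global cyclic identity forces $\Theta(G)$ to be \emph{finite} over $\bZ$, in contrast to cases like the symmetric group whose $\Theta$ has infinite rank.

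\emph{Step 2 (solving the system: trees appear).} This is the combinatorial core and, I expect, the main obstacle. Over an arbitrary field $k$ I would classify all solutions of the above system. I anticipate that the subdivision relations force each arc-mass into $\{0,1\}$, that the arcs of mass $1$ must form the edge set of a tree with $n$ edges (bijectively labeled by $\Sigma$) on $n+1$ unlabeled vertices, and that the remaining color data are precisely a choice of orientation of each edge; conversely every directed $\Sigma$-labeled tree should yield a solution. The natural approach is induction on $n=\#\Sigma$, relating the system for $\Sigma$ to the one for a smaller color set — e.g.\ by merging two colors, which on the tree side corresponds to contracting or deleting a leaf edge — so that the tree can be rebuilt one edge at a time. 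The delicate point is to upgrade this to a genuine bijection between solutions and directed trees, rather than an inequality in one direction; I expect the base case and the rigidifying role of the global identity to require care.

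\emph{Step 3 (the measures $\mu_T$).} Conversely, for each directed $\Sigma$-labeled tree $T$ I would write down explicitly the assignment of basic masses read off from $T$ — an arc from a color $a$ to a color $b$ receiving a mass determined by how the unique path in $T$ between the edges labeled $a$ and $b$ sits relative to their orientations, with the color masses determined in the same spirit — and verify directly that every relation of the presentation holds. This produces the $\bZ$-valued measure $\mu_T$; by Step~2 the $\mu_T$ exhaust the $k$-points of $\Spec \Theta(G)$ for every field $k$.

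\emph{Step 4 (identification and count).} The $\mu_T$ assemble into a ring homomorphism $\Phi\colon\Theta(G)\to\prod_T\bZ$. Since they realize every geometric point of $\Spec\Theta(G)$, the kernel of $\Phi\otimes\bQ$ is the nilradical, so $\Phi$ is injective once $\Theta(G)$ is shown reduced and $\bZ$-torsion-free; I would obtain this from the presentation by checking that the Jacobian of the system is nonsingular at each of the $N$ (automatically rational) solutions, so that $\Theta(G)\otimes\bQ\cong\bQ^N$, and by ruling out torsion directly, using that the classification of Step~2 is valid over an arbitrary field. For surjectivity it suffices to hit each standard idempotent $e_T$, i.e.\ to exhibit a polynomial in the basic masses equal to $1$ at the solution attached to $T$ and to $0$ at every other solution, which exists because the solution set is finite and reduced. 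Finally, the number of directed trees with edges bijectively labeled by an $n$-element set is $2^n$ (one orientation per edge) times the number of edge-labeled trees on $n+1$ unlabeled vertices, and the latter equals $(n+1)^{n-2}$ by a standard variant of Cayley's formula; hence $\Theta(G)\cong\bZ^N$ with $N=2^n(n+1)^{n-2}$, as claimed.
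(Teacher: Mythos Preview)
Your overall architecture---reduce to a finite presentation of $\Theta(G)$, classify the $k$-points, match them with directed $\Sigma$-labeled trees, and then deduce the ring structure---is the same as the paper's. But two of your steps are substantially different from what the paper does, and in one of them you are missing the key idea.

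\textbf{Step~2 (the combinatorial core).} Your anticipation that ``arc-masses are forced into $\{0,1\}$ and form a tree'' is not quite what happens, and your proposed induction on $n$ by merging colors is not how the paper proceeds. The paper first reduces $\Sigma$-measures to \emph{$\Sigma$-symbols}: functions $\eta\colon\Sigma^3\to k$ satisfying two short identities (one multiplicative, one additive). The crucial observation is then that $-\eta_{a,b}(a)$ is an \emph{idempotent} of $k$; over a connected ring this forces $S_{a,b}:=-\eta_{a,b}(a)\in\{0,1\}$, and the multiplicative symbol identity translates into a single ternary condition on $S$ (``at most one of three equalities fails''). The paper calls such an $S$ an \emph{oriented bisection structure} and proves, by a direct argument independent of $n$, that these are exactly the structures $S^T$ coming from directed $\Sigma$-labeled trees. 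So the tree does not emerge from an induction but from recognizing that the symbol axioms encode the ``removing an edge disconnects a tree into two pieces'' relation. Without this intermediate notion your Step~2 is a plan rather than a proof; the induction you sketch would have to rediscover the bisection-structure condition in disguise.

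\textbf{Step~4 (ring identification).} Your Jacobian/torsion-free/hit-the-idempotents argument can be made to work, but it is heavier than necessary and the surjectivity part is delicate: a separating polynomial over $\bQ$ need not lie in $\Theta(G)$ (think of $\bZ[x]/(x^2-1)\hookrightarrow\bZ^2$, which is not surjective). The paper avoids all of this: once the classification is known over every \emph{connected} ring (not just fields), the map $\phi\colon\Theta(G)\to\prod_T\bZ$ induces a bijection on $\Hom(-,k)$ for every finite product of connected rings, so by Yoneda it suffices to show $\Theta(G)$ itself is such a product. That in turn follows because $\Theta(G)$ is finitely generated over $\bZ$ (via the symbol presentation) and has finitely many minimal primes---each minimal prime, having connected quotient, must equal some $\ker\mu_T$. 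This replaces your Jacobian computation and your idempotent-lifting entirely.
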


Here, a ``tree'' is a connected simple graph with no cycles, ``directed'' means each edge has been given an orientation, and the labeling means that there is a  given bijection between the edge set and $\Sigma$. An example is given in \S \ref{ss:ex}.

Fix a point $\infty \in \bS$, and let $\bL=\bS \setminus \{\infty\}$. The set $\bL$ is totally ordered and $\Sigma$-colored, and its automorphism group $H$ is oligomorphic. We also classify measures for $H$ (see \S \ref{s:line}). (We note that the group $H$ appears in \cite[\S 5.2]{Agarwal} and \cite{Laflamme}.) 

\subsection{Summary of proof} \label{ss:plan}

The proof has essentially four main steps:
\begin{enumerate}
\item Using the classification of open subgroups of $H$ (Proposition~\ref{prop:subgp}), we first show that measures for $G$ are equivalent to $\Sigma$-measures. A $\Sigma$-measure is a rule $\nu$ that assigns to each $a,b \in \Sigma$ and word $w \in \Sigma^{\star}$ a quantity $\nu_{a,b}(w)$, such that certain relations hold (see Definition~\ref{defn:Sigma-meas}). This is an important reduction since $\Sigma$-measures are purely combinatorial objects.
\item We next show that $\Sigma$-measures are determined by their values on words of length one, and the defining relations are generated by those involving words of length at most two. We phrase this result as an equivalence between $\Sigma$-measures and another notion called $\Sigma$-symbols (see Definition~\ref{defn:symbol}). This is another important reduction since $\Sigma$-symbols are far simpler than $\Sigma$-measures.
\item Next, we essentially solve the defining equations for $\Sigma$-symbols. Assuming the coefficient ring is connected, $\Sigma$-symbols correspond to functions $S \colon \Sigma^2 \to \{0,1\}$ satisfying one relatively simple condition, namely, condition $(\ast)$ in \S \ref{ss:orbi}.
\item The functions $S$ appearing above are studied in \S \ref{s:bisect}, where they are called ``oriented bisection structures.'' We show that these functions naturally correspond to directed trees with edges labeled by $\Sigma$.
\end{enumerate}

\subsection{Tensor categories}

Let $k$ be a field. Given a tree $T$ as in Theorem~\ref{mainthm}, the machinery of \cite{repst} produces a $k$-linear rigid tensor category $\uPerm(G; \mu_T)$ (which is not abelian). This is potentially a very interesting example, especially in light of the results of \cite{delannoy, delannoy2} in the case $\# \Sigma=1$. An important problem is to determine if this category has an abelian envelope, as this would yield a new pre-Tannakian category. We know of no obstruction, but when $\# \Sigma>1$ the results in \cite{repst} on abelian envelopes do not apply (the measure $\mu_T$ is not quasi-regular).

\subsection{Outline}

In \S \ref{s:bisect}, we introduce the concept of a bisection structure, and show that they are equivalent to trees. In \S \ref{s:olig}, we review oligomorphic groups and measures in general. In \S \ref{s:group}, we introduce the main groups of interest, and determine some of their group-theoretic properties. In \S \ref{s:class}, we prove Theorem~\ref{mainthm}. Finally, in \S \ref{s:line}, we treat the group $H$.

\subsection{Notation}

The following is the most important notation:
\begin{description}[align=right,labelwidth=2.25cm,leftmargin=!]
\item[ $k$ ] the coefficient ring
\item[ $\Sigma$ ] the finite set of colors
\item[ $\Sigma^{\star}$ ] the set of words in the alphabet $\Sigma$
\item[ $\bS$ ] the circle
\item[ $\bL$ ] the line, defined as $\bS \setminus \{\infty\}$
\item[ $\sigma$ ] the coloring of $\bS$
\item[ $G$ ] the automorphism group of $\bS$ (except in \S \ref{s:olig})
\item[ $H$ ] the automorphism group of $\bL$ 
\item[ $G(A)$ ] the subgroup of $G$ fixing each element of $A$
\item[ {$w[i,j]$} ] a substring of the word $w$ (see \S \ref{ss:Sigma-meas})
\end{description}

\subsection*{Acknowledgments}

We thank Nate Harman and Steven Sam for helpful discussions.

\section{Bisection structures} \label{s:bisect}

\subsection{The definition} \label{ss:bisect-defn}

Suppose $x$ is a real number. Deleting $x$ from the real line cuts it into two pieces. We thus get an equivalence relation $R_x$ on $\bR \setminus \{x\}$ by letting $R_x(y,z)$ mean ``$y$ and $z$ belong to the same connected component.'' If $x$, $y$, and $z$ are distinct real numbers then exactly one of $R_x(y,z)$, $R_y(x,z)$, and $R_z(x,y)$ is false. The following definition axiomizes this situation, but allows for a bit more flexibility.

\begin{definition} \label{defn:bisect}
Let $X$ be a set. A \defn{bisection structure} on $X$ is a rule $R$ assigning to each $x \in X$ an equivalence relation $R_x$ on $X \setminus \{x\}$ such that the following conditions hold:
\begin{enumerate}
\item The equivalence relation $R_x$ has at most two equivalence classes.
\item If $x,y,z \in X$ are distinct then at most one of $R_x(y,z)$, $R_y(x, z)$, and $R_z(x, y)$ fails to hold. \qedhere
\end{enumerate}
\end{definition}

Trees also lead to bisection structures. For the purposes of this paper, a \defn{tree} is a finite simple graph that is connected and has no cycles, and an \defn{$X$-labeled tree} is a tree with a given bijection between $X$ and the edge set (i.e., the edges are labeled by $X$). Suppose $T$ is an $X$-labeled tree, and let $x$ be an edge of $T$. Deleting $x$ from $T$ (but not the vertices in $x$) yields a forest $T_x$ with at most two components. Let $R_x(y,z)$ mean ``$y$ and $z$ belong to the same component of $T_x$.'' Equivalently, $R_x(y,z)$ means that the geodesic joining $y$ and $z$ in $T$ does not include the edge $x$. One readily verifies that the $R_x$'s define a bisection structure on $X$. We denote this bisection structure by $R^T$.

\begin{remark}
Bisection structures are closely related to the concept of \emph{betweenness}. Indeed, if $R$ is the bisection structure on the real line then $\lnot R_x(y,z)$ exactly means that $x$ is between $y$ and $z$; a similar observation holds for the bisection structures associated to trees. Bankston \cite{Bankston} has defined a general notion of betweenness, and discussed many examples. The betweenness relations on the vertices of trees appears often in the literature; however, this does not lead to a bisection structure in general.
\end{remark}

\subsection{The main result}

The following is the main result we need on bisection structures:

\begin{theorem} \label{thm:bisect}
Let $R$ be a bisection structure on a finite set $X$. Then there exists an $X$-labeled tree $T$ such that $R=R^T$, and $T$ is unique up to isomorphism.
\end{theorem}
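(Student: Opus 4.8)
The plan is to induct on $|X|$, reconstructing $T$ by peeling off and then re-attaching a single ``leaf'' edge. The base cases $|X|\le 2$ are handled directly: there is a unique tree with at most two edges, and one checks it realizes the unique bisection structure on such an $X$.

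For the inductive step, with $|X|\ge 3$, the first task is to produce an $x\in X$ for which $R_x$ has at most one class; this is the edge to remove, and in the resulting tree it will be a leaf edge. I would prove existence of such an $x$ by an extremal argument: fix $a\in X$ and, among all $b\ne a$, choose $b$ maximizing $|M(a,b)|$, where $M(a,b)=\{\,c:\lnot R_c(a,b)\,\}$ is the set of elements ``between'' $a$ and $b$. If $M(a,b)=\varnothing$ for this maximal choice, then every $R_{b'}$ with $b'\ne a$ has one class (otherwise one finds $c'$ with $\lnot R_{b'}(a,c')$, placing $b'\in M(a,c')$), and we are done; if $M(a,b)\ne\varnothing$ I claim $R_b$ itself has one class. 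The crux here is a transitivity statement — if $\lnot R_c(a,b)$ and $\lnot R_b(a,c')$ then $\lnot R_c(a,c')$ — which follows from two applications of the second defining property of a bisection structure. Granting it, if $R_b$ had two classes, then choosing $c'$ in the class not containing $a$ gives $M(a,c')\supseteq M(a,b)\cup\{b\}$, contradicting maximality.

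Having removed such an $x$, the restriction $R|_{X\setminus\{x\}}$ is again a bisection structure (both conditions of Definition~\ref{defn:bisect} are inherited), so by induction $R|_{X\setminus\{x\}}=R^{T'}$ for a tree $T'$ that is unique up to isomorphism. The main step — and, I expect, the main obstacle — is to identify the vertex $v$ of $T'$ at which to graft a new leaf edge labeled $x$. The vertex $v$ must be described purely in terms of $R$ and $T'$: it should be the vertex lying, for every edge $w$ of $T'$, on the side of $w$ containing the edges $u$ with $R_w(u,x)$ (and, when there are no such edges, one checks that $w$ must be a leaf of $T'$ and takes for that side the isolated endpoint of $w$). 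Uniqueness of such a $v$ is immediate, since two distinct vertices of a tree are separated by an edge and hence lie on opposite sides of it. For existence I would invoke the Helly property for subtrees of a tree: each candidate side $S_w$ is a subtree of $T'$, so it suffices to show $S_w\cap S_{w'}\ne\varnothing$ for all pairs $w,w'$; and analyzing the relative position of two edges in a tree, $S_w\cap S_{w'}=\varnothing$ would force simultaneously $\lnot R_w(w',x)$ and $\lnot R_{w'}(w,x)$, contradicting the second defining property applied to the triple $\{w,w',x\}$. This is precisely the point at which that axiom is essential.

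It then remains to form $T$ by attaching a leaf edge $x$ at $v$ and to verify $R^T=R$. On pairs of edges in $X\setminus\{x\}$ this is the inductive hypothesis together with the observation that grafting a leaf does not alter which component of $T'-w$ contains a given old edge; on pairs involving $x$, the edge $x$ lies in $v$'s component of $T'-w$, which by the defining property of $v$ is exactly the side $S_w$ of the edges $R_w$-equivalent to $x$. Finally, uniqueness of $T$ also follows by induction: any tree realizing $R$ must have $x$ as a leaf edge (because $R_x$ has one class), contracting that edge gives a tree realizing $R|_{X\setminus\{x\}}$ and hence isomorphic to $T'$, and any such isomorphism carries the intrinsically characterized grafting vertex to the grafting vertex, so extends across the edge $x$.
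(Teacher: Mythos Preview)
Your argument is correct and complete in all essentials; the one place I would ask you to add a sentence is in the extremal step, where you assert $M(a,c')\supseteq M(a,b)\cup\{b\}$. For $c\in M(a,b)$ with $c\ne c'$ your transitivity claim handles it, but you should note that $c'\notin M(a,b)$: otherwise $\lnot R_{c'}(a,b)$ and $\lnot R_b(a,c')$ both hold, which already violates Definition~\ref{defn:bisect}(b) on the triple $\{a,b,c'\}$. With that remark the containment is strict and the maximality contradiction goes through.

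Your route is genuinely different from the paper's. The paper gives a direct, global construction: it declares two elements $x,y$ \emph{adjacent} when $R_z(x,y)$ for all $z$, builds a vertex set as equivalence classes on $X\times\{\pm\}$ (gluing $(x,+)$ to $(y,-)$ along adjacencies), and then proves the resulting graph is a tree by showing the ``between'' sets $P(x,z)=\{y:\lnot R_y(x,z)\}$ are exactly the geodesics. Your approach is inductive: locate a leaf via the extremal argument on $|M(a,b)|$, strip it off, rebuild the smaller tree, and reattach using the Helly property for subtrees. The paper's construction has the virtue of producing the tree in one shot and giving an explicit inverse $R\mapsto T(R)$, which makes the bijection $\sT\leftrightarrow\sB$ transparent; its treatment of uniqueness, however, is a one-line ``easy to see.'' Your approach makes uniqueness fall out of the induction cleanly and is arguably more elementary, at the cost of invoking Helly (standard, but an extra ingredient) and of not yielding a closed-form description of the vertices.
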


The proof will be divided into several lemmas. Say that distinct elements $x$ and $y$ of $X$ are \defn{adjacent} if for all $z \in X \setminus \{x,y\}$ we have $R_z(x,y)$. For each $x \in X$, let $E_x^+$ and $E_x^-$ be the two equivalence classes for $R_x$, labeled in an arbitrary manner; if there are fewer than two equivalence classes, take one or both of the sets to be empty. Let $\tilde{V} = X \times \{\pm\}$. We define a relation $\sim$ on $\tilde{V}$ by $(x,a) \sim (y,b)$ if $(x,a)=(y,b)$, or $x \ne y$ are adjacent and $x \in E_y^b$ and $y \in E_x^a$.

\begin{lemma}
$\sim$ is an equivalence relation on $\tilde{V}$.
\end{lemma}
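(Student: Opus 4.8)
The plan is to verify the three axioms of an equivalence relation directly from the definition of $\sim$, with essentially all the content concentrated in transitivity.

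Reflexivity is immediate, since $(x,a)\sim(x,a)$ is the first clause of the definition. Symmetry is equally quick: the equality clause is symmetric, and the ``genuine'' clause --- requiring $x\ne y$ adjacent with $x\in E_y^b$ and $y\in E_x^a$ --- is visibly unchanged under interchanging $(x,a)$ with $(y,b)$. Note also that, by construction, $E_x^+$ and $E_x^-$ are always disjoint (if $R_x$ has fewer than two classes then at least one of them is empty), so membership in $E_x^a$ pins down the label $a$; I will use this repeatedly.

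For transitivity, suppose $(x,a)\sim(y,b)$ and $(y,b)\sim(z,c)$. If either relation holds via the equality clause, the conclusion is trivial, so I may assume both are genuine; then $x\ne y$, $y\ne z$, $x\in E_y^b$, $y\in E_x^a$, $z\in E_y^b$, $y\in E_z^c$, and both pairs $\{x,y\}$ and $\{y,z\}$ are adjacent. I would then split into two cases. If $x=z$, then $y\in E_x^a$ and $y\in E_x^c$ force $a=c$ by disjointness, so $(x,a)=(z,c)$ and we are done. If $x,y,z$ are all distinct, I claim $x$ and $z$ are adjacent with $x\in E_z^c$ and $z\in E_x^a$, which is exactly $(x,a)\sim(z,c)$. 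For the memberships: adjacency of $\{x,y\}$ gives $R_z(x,y)$ (here $z\notin\{x,y\}$), and $y\in E_z^c$ then forces $x\in E_z^c$; symmetrically, adjacency of $\{y,z\}$ gives $R_x(y,z)$, and $y\in E_x^a$ forces $z\in E_x^a$. For adjacency of $\{x,z\}$, take any $w\notin\{x,z\}$: if $w=y$, then $x,z\in E_y^b$ gives $R_y(x,z)$; if $w\ne y$, then $R_w(x,y)$ and $R_w(y,z)$ both hold by adjacency, and transitivity of the equivalence relation $R_w$ yields $R_w(x,z)$.

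The only real subtlety --- the main obstacle, such as it is --- is bookkeeping: correctly isolating the degenerate configurations (either relation being the identity, or $x=z$) and using disjointness of $E_x^\pm$ to keep the labels under control. It is worth remarking that this lemma needs only that each $R_x$ is an equivalence relation with at most two classes; axiom (2) of a bisection structure plays no role here and will enter later, when one checks that the resulting graph is a tree.
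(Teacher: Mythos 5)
Your proof is correct and follows essentially the same route as the paper: reduce to transitivity, establish adjacency of $x$ and $z$ by splitting on $w=y$ versus $w\ne y$, then transport the class memberships via $R_z(x,y)$ and $R_x(y,z)$. You are in fact slightly more careful than the paper, which silently skips the degenerate cases (equality clause, $x=z$) that you handle via disjointness of $E_x^+$ and $E_x^-$.
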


\begin{proof}
We just need to verify transitivity. Thus suppose $(x,a) \sim (y,b)$ and $(y,b) \sim (z,c)$. We show $(x,a) \sim (z,c)$.

We first claim that $x$ and $z$ are adjacent. Let $w \in X \setminus \{x,z\}$ be given. If $w=y$ then since $x$ and $z$ both belong to the equivalence class $E_y^b$, the relation $R_w(x,z)$ holds. If $w \ne y$ then $R_w(x,y)$ and $R_w(y,z)$ hold, since $(x,y)$ and $(y,z)$ are adjacent pairs, and so $R_w(x,z)$ holds, since $R_w$ is an equivalence relation. This proves the claim.

Now, since $x$ and $y$ are adjacent, the relation $R_z(x,y)$ holds. Since $y$ belongs to $E_z^c$, it follows that $x$ does as well. Similarly, $z$ belongs to $E_x^a$. Thus $(x,a) \sim (z,c)$, as required.
\end{proof}

Let $V$ to be the quotient $\tilde{V}/\sim$. We define a graph $T=T(R)$ with vertex set $V$ and edge set $X$. For $x \in X$, the two vertices of the edge $x$ are the classes of $(x,+)$ and $(x,-)$ in $V$; these two vertices are distinct by definition of the equivalence relation, and so $T$ has no loops. The following lemma shows that there are no 2-cycles, i.e., parallel edges. Thus $T$ is a simple graph. Note that if $x$ and $y$ are distinct elements of $X$ then they are adjacent (in the above sense) if and only if they share a vertex in $T$.

\begin{lemma}
$T$ has no cycles, i.e., it is a forest.
\end{lemma}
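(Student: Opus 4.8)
The plan is to show that a cycle in $T$ would violate condition (b) of a bisection structure. Suppose, for contradiction, that $T$ contains a cycle. Since $T$ is a simple graph (loops and parallel edges having already been ruled out), a shortest cycle has length at least $3$; say it is given by edges $x_1, x_2, \dots, x_m$ ($m \ge 3$) in cyclic order, where consecutive edges $x_i, x_{i+1}$ share a vertex (indices mod $m$). By the remark just before the lemma, "$x_i$ and $x_j$ share a vertex in $T$" is the same as "$x_i$ and $x_j$ are adjacent" in the combinatorial sense defined above. So consecutive $x_i, x_{i+1}$ are adjacent; since the cycle is shortest, non-consecutive $x_i, x_j$ are \emph{not} adjacent.

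First I would reduce to the triangle case $m = 3$. For $m \ge 4$, pick three of the edges, say $x_1, x_2$, and $x_j$ with $2 < j < m$. These are pairwise non-adjacent except possibly for $x_1, x_2$ — but in fact I want a configuration that lets me pump out a contradiction, so instead the cleaner route is: track a fixed vertex around the cycle. Let $v_i$ be the shared vertex of $x_i$ and $x_{i+1}$. Each edge $x_i$ has vertices $v_{i-1}$ and $v_i$. Using the equivalence relation $\sim$ on $\tilde V$, the vertex $v_i$ is (the class of) $(x_i, a_i) = (x_{i+1}, b_{i+1})$ for appropriate signs, and adjacency of $x_i, x_{i+1}$ means $x_i \in E_{x_{i+1}}^{b_{i+1}}$ and $x_{i+1} \in E_{x_i}^{a_i}$. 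The key point is that the two vertices of $x_i$ correspond to the two equivalence classes $E_{x_i}^{+}, E_{x_i}^{-}$ of $R_{x_i}$, and walking along the cycle from $v_{i-1}$ to $v_i$ across edge $x_i$ "switches sides" of $x_i$: that is, $x_{i-1} \in E_{x_i}^{\bar a_i}$ while $x_{i+1} \in E_{x_i}^{a_i}$ with $\bar a_i \neq a_i$ — equivalently, $\lnot R_{x_i}(x_{i-1}, x_{i+1})$ — \emph{provided} $x_{i-1}$ and $x_{i+1}$ are distinct and both $\neq x_i$, which holds since $m \ge 3$. (If instead $x_{i-1}$ and $x_{i+1}$ lay on the same side of $x_i$, the two classes $(x_i, +), (x_i, -)$ would coincide with a single vertex, contradicting that $x_i$ is a genuine edge with two distinct endpoints forming part of a shortest cycle; I would spell this out carefully.)

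Now I apply condition (b) to the triple $\{x_{i-1}, x_i, x_{i+1}\}$ — valid as these are three distinct elements of $X$. We have just argued $\lnot R_{x_i}(x_{i-1}, x_{i+1})$ for \emph{every} $i$. Taking $i$ and $i+1$: condition (b) says at most one of $R_{x_{i-1}}(x_i, x_{i+1})$, $R_{x_i}(x_{i-1}, x_{i+1})$, $R_{x_{i+1}}(x_{i-1}, x_i)$ fails; but when $m \ge 4$ the first and third of these are statements about non-consecutive pairs, hence... — actually the cleanest contradiction: for $m = 3$, apply (b) to $\{x_1, x_2, x_3\}$; we showed all three of $R_{x_1}(x_2,x_3)$, $R_{x_2}(x_1,x_3)$, $R_{x_3}(x_1,x_2)$ fail, contradicting (b), which allows at most one failure. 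For $m \ge 4$, the same walk shows $\lnot R_{x_i}(x_{i-1}, x_{i+1})$, and since $x_{i-1}$ and $x_{i+1}$ are non-adjacent there is some edge — in fact I claim $x_i$ — witnessing this; but then chaining three consecutive such relations, say around $\{x_1, x_2, x_3\}$ after observing $x_1$ and $x_3$ are non-adjacent so there exists $z$ with $\lnot R_z(x_1,x_3)$, forces two simultaneous failures in the triple $\{x_1, x_2, z\}$ or reduces the cycle length, contradiction. I expect the main obstacle to be exactly this reduction from a long cycle to a local violation of (b): one must be careful that the "side-switching" claim $\lnot R_{x_i}(x_{i-1},x_{i+1})$ genuinely holds and that the three elements fed into condition (b) are honestly distinct. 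Once those bookkeeping points are nailed down, the contradiction with (b) is immediate, so $T$ has no cycles.
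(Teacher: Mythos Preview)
Your proposal has the right starting observation but a genuine gap for cycles of length $m \ge 4$, and a minor misreading of the setup.

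First, the setup: you say ``loops and parallel edges having already been ruled out,'' but in fact the paper's text just before this lemma says that \emph{this} lemma is what rules out parallel edges (2-cycles). So your argument should also cover $m=2$; this is easy (a 2-cycle would force $x_2 \in E_{x_1}^{+} \cap E_{x_1}^{-}$), but you should not assume it.

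Your $m=3$ argument is correct and clean: the side-switching observation gives $\lnot R_{x_i}(x_{i-1},x_{i+1})$ for $i=1,2,3$, so all three relations in condition~(b) fail for the single triple $\{x_1,x_2,x_3\}$, a contradiction.

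The real gap is $m \ge 4$. You correctly establish $\lnot R_{x_i}(x_{i-1},x_{i+1})$ for every $i$, but these are statements about \emph{different} triples, so condition~(b) is not immediately violated. Your attempts to localize (``there exists $z$ with $\lnot R_z(x_1,x_3)$ \ldots forces two simultaneous failures \ldots or reduces the cycle length'') do not go through as written: the witness $z$ you invoke is just $x_2$ again, giving no new information, and no cycle-shortening mechanism is actually produced. You yourself flag this as ``the main obstacle,'' and it is not resolved.

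The paper closes this gap by a propagation argument rather than a local one. After normalizing so that $(x_i,+)\sim(x_{i+1},-)$ for all $i$, one has $x_2 \in E_{x_1}^{+}$ and $x_1 \in E_{x_2}^{-}$. Inductively, knowing $x_1 \in E_{x_i}^{-}$ and $x_{i+1} \in E_{x_i}^{+}$ gives $\lnot R_{x_i}(x_1,x_{i+1})$; condition~(b) on the triple $\{x_1,x_i,x_{i+1}\}$ then forces $R_{x_1}(x_i,x_{i+1})$ and $R_{x_{i+1}}(x_1,x_i)$, which propagate both hypotheses to the next index. Walking all the way around yields $x_n \in E_{x_1}^{+}$, while the cycle closing gives $x_n \in E_{x_1}^{-}$, the desired contradiction. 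This is the missing idea: track everything relative to a \emph{fixed} edge $x_1$ and use (b) to push membership in $E_{x_1}^{+}$ one step at a time, rather than trying to manufacture a triangle.
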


\begin{proof}
Suppose by way of contraction that we have a cycle. Let $x_1, \ldots, x_n$ be the edges involved, so that $x_i$ is adjacent to $x_{i+1}$ for all $i \in \bZ/n$. Since the labeling of equivalence classes was arbitrary, we may as well suppose that $(x_i, +) \sim (x_{i+1}, -)$ for all $i \in \bZ/n$. We thus have $x_2 \in E^+_{x_1}$. Now, $x_1 \in E^-_{x_2}$ and $x_3 \in E^+_{x_2}$, so $\lnot R_{x_2}(x_1, x_3)$ holds; thus by Definition~\ref{defn:bisect}(b), we have $R_{x_1}(x_2,x_3)$, and so $x_3 \in E_{x_1}^+$. Continuing in this manner, we find $x_i \in E_{x_1}^+$ for all $2 \le i \le n$. However, since $(x_n, +) \sim (x_1, -)$, we have $x_n \in E_{x_1}^-$, a contradiction.
\end{proof}

Let $x \ne z$ be elements of $X$. We say that $y \in X \setminus \{x,z\}$ is \defn{between} $x$ and $z$ if $\lnot R_y(x,z)$ holds. We let $P(x,z)$ be the set of such elements $y$.

\begin{lemma}
$T$ is connected, and thus a tree. In fact, for $x \ne z$, the set $P(x,z)$ is the collection of edges in the shortest path joining $x$ and $z$.
\end{lemma}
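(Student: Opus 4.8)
The plan is to prove, by induction on the cardinality $|P(x,z)|$, the following combined assertion: for all distinct $x, z \in X$, the edges $x$ and $z$ lie in a common connected component of $T$, and the set of edges lying strictly between them on the (then necessarily unique) path joining $x$ to $z$ equals $P(x,z)$. Quantifying over all pairs yields that $T$ is connected, hence (being a forest already) a tree, and the refinement is exactly the ``in fact'' clause, under the convention that the ``path joining the edges $x$ and $z$'' records the edges strictly between them, so that it is empty precisely when $x$ and $z$ are adjacent.

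First I would record a preliminary combinatorial fact, proved by chasing Definition~\ref{defn:bisect}(b) through triples: \emph{if $y \in P(x,z)$ then $P(x,y) \subsetneq P(x,z)$ and $P(y,z) \subsetneq P(x,z)$}. For the first inclusion, given $w \in P(x,y)$ one checks $w \notin \{x,y,z\}$ and argues by contradiction: if $R_w(x,z)$ held then, since $\lnot R_w(x,y)$, one obtains $\lnot R_w(y,z)$, which by (b) forces $R_y(w,z)$; but (b) applied to $\{x,y,w\}$ gives $R_y(x,w)$, and together these yield $R_y(x,z)$, contradicting $y \in P(x,z)$. The second inclusion is similar, this time producing two simultaneous failures $\lnot R_w(y,z)$ and $\lnot R_y(w,z)$ in the triple $\{w,y,z\}$, which (b) forbids. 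Strictness is immediate since $y$ lies in $P(x,z)$ but in neither $P(x,y)$ nor $P(y,z)$.

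The base case $P(x,z)=\emptyset$ says exactly that $x$ and $z$ are adjacent, hence share a vertex of $T$, so the path joining them has empty interior. For the inductive step, assume $P(x,z)\ne\emptyset$ and choose $y_0 \in P(x,z)$ minimizing $|P(x,y_0)|$; applying the preliminary fact inside $P(x,y_0)$ forces $P(x,y_0)=\emptyset$, i.e.\ $y_0$ is adjacent to $x$. This adjacency upgrades the inclusion $P(y_0,z) \subseteq P(x,z)$ to the equality $P(y_0,z) = P(x,z)\setminus\{y_0\}$: for any $w \in P(x,z)$ with $w \ne y_0$, adjacency gives $R_w(x,y_0)$ while $\lnot R_w(x,z)$, whence $\lnot R_w(y_0,z)$. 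Since $|P(y_0,z)| < |P(x,z)|$, the inductive hypothesis provides a path $y_0 = e_0, e_1, \dots, e_m = z$ in $T$ with interior $\{e_1,\dots,e_{m-1}\} = P(x,z)\setminus\{y_0\}$. It remains to prepend $x$. The edge $x$ meets $y_0$ at exactly one vertex $v$ (it cannot meet at both, $T$ being simple); I claim its neighbour $e_1$ on the path (which is $z$ when $m=1$) does not also meet $y_0$ at $v$. Otherwise $e_1$ would be adjacent to $x$: when $e_1=z$ this contradicts $P(x,z)\ne\emptyset$, and when $e_1 \in P(x,z)\setminus\{y_0\} = P(y_0,z)$ we would have $\lnot R_{e_1}(y_0,z)$, while the adjacency of $e_1$ and $x$ (via the displayed equality with $e_1$ in the role of $y_0$) gives $y_0 \in P(e_1,z)$, i.e.\ $\lnot R_{y_0}(e_1,z)$ --- two failures in the triple $\{y_0,e_1,z\}$, forbidden by (b). Hence $e_1$ meets $y_0$ at its other vertex, so $x, e_0, e_1, \dots, e_m$ (all distinct, since $x \notin \{z\}\cup P(x,z)$) is a non-backtracking walk; as $T$ has no cycles it is a simple path, and its interior is $\{y_0\}\cup(P(x,z)\setminus\{y_0\}) = P(x,z)$, closing the induction.

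The step demanding the most care is this prepending: one must exclude the configuration in which $x$ attaches to $y_0$ ``on the $z$-side,'' which would produce a path from $x$ to $z$ bypassing $y_0$ and so give the wrong interior. This is precisely where the strong form of axiom (b) --- that \emph{at most one} of the three relations fails --- is essential, via the forbidden double failure in $\{y_0,e_1,z\}$; together with the already-established acyclicity of $T$, it pins the path down exactly.
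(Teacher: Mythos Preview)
Your proof is correct and follows essentially the same inductive strategy as the paper's (induction on $|P(x,z)|$, reducing to a $y_0 \in P(x,z)$ adjacent to $x$, then prepending $x$ to the inductively obtained path from $y_0$ to $z$). You are in fact more careful than the paper at the prepending step: the paper simply asserts that adjoining $x$ to the path through $y$ yields a path from $x$ to $z$, whereas you explicitly rule out---via the double-failure contradiction in the triple $\{y_0,e_1,z\}$---the configuration in which $x$ would attach to $y_0$ on the $z$-side, which is exactly what is needed to ensure the concatenation is a genuine (non-backtracking) path with interior $P(x,z)$.
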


\begin{proof}
We proceed by induction on $\# P(x,z)$. If $P(x,z)$ is empty then $x$ and $z$ are adjacent, and the statement is trivial. Suppose now that $\# P(x,z)>0$, and let $y$ be an element of this set. We claim that
\begin{displaymath}
P(x,z) = P(x,y) \cup \{y\} \cup P(y,z).
\end{displaymath}
Indeed, suppose that $w \in P(x,y)$, i.e., $\lnot R_w(x,y)$ holds. We have the following implications
\begin{displaymath}
\lnot R_w(x,y) \implies R_y(w,x) \implies \lnot R_y(w,z) \implies R_w(y,z) \implies \lnot R_w(x,z).
\end{displaymath}
The first implication comes from Definition~\ref{defn:bisect}(b); the second follows since $\lnot R_y(x,z)$ holds and $R_y$ is an equivalence relation; the third comes from Definition~\ref{defn:bisect}(b); and the fourth follows since $\lnot R_w(x,y)$ holds and $R_w$ is an equivalence relation. Thus $w \in (x,z)$. We have thus show that $P(x,y) \subset P(x,z)$, and by symmetry we have $P(y,z) \subset P(x,z)$ as well. This proves one of the containments above.

We now prove the reverse containment. Thus suppose $w \in P(x,z)$ and $w \ne y$. We must show $w$ belongs to either $P(x,y)$ or $P(y,z)$. Suppose it does not belong to $P(x,y)$. Then $\lnot R_w(x,z)$ holds, since $w \in P(x,z)$, and $R_w(x,y)$ holds, since $w \not\in P(x,y)$. Since $R_w$ is an equivalence relation, it follows that $\lnot R_w(y,z)$ holds, and so $w \in P(y,z)$, as required.

Now, $P(x,y)$ and $P(y,z)$ do not contain $y$, and thus are proper subsets of $P(x,y)$. Thus, by the inductive hypothesis, $P(x,y)$ is a path from $x$ to $y$, and $P(y,z)$ is a path from $y$ to $z$. It follows that $P(x,z)$ is a path from $x$ to $z$. As for the minimality of $P(x,z)$, choose $y \in P(x,z)$ adjacent to $x$ (which must exist). Then $P(x,z) = \{y\} \sqcup P(y,z)$. Since $P(y,z)$ is a minimal path between $y$ and $z$ (by induction), and $x \not\in P(y,z)$, it follows that $P(x,z)$ is a minimal path between $x$ and $z$.
\end{proof}

\begin{lemma} \label{lem:bisect-4}
We have $R=R^T$.
\end{lemma}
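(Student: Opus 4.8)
The plan is to verify directly that $R_x$ and $R^T_x$ coincide as equivalence relations on $X \setminus \{x\}$ for each edge $x \in X$; this will be an immediate consequence of the description of betweenness obtained in the previous lemma.

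Fix $x \in X$ and take $y, z \in X \setminus \{x\}$; the goal is to show that $R_x(y,z)$ holds if and only if $R^T_x(y,z)$ holds. If $y = z$ both relations hold by reflexivity, so assume $y \neq z$. Unwinding the definition of $R^T$, the relation $R^T_x(y,z)$ holds exactly when $y$ and $z$ lie in the same component of the forest $T_x$ obtained by deleting the edge $x$ — equivalently, when the geodesic in $T$ joining the edges $y$ and $z$ does not contain $x$. By the previous lemma this geodesic has edge set $P(y,z) = \{\, w \in X \setminus \{y,z\} : \lnot R_w(y,z) \,\}$. Since $x \notin \{y,z\}$, we have $x \in P(y,z)$ if and only if $\lnot R_x(y,z)$. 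Chaining these equivalences gives $R^T_x(y,z) \iff x \notin P(y,z) \iff R_x(y,z)$. As $y$ and $z$ were arbitrary, $R_x = R^T_x$, and as $x$ was arbitrary, $R = R^T$.

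There is no genuinely hard step here: all of the combinatorial content (that $T$ is a tree, and that $P(x,z)$ is precisely its geodesic edge set) was carried out in the earlier lemmas, and what remains is a formal unwinding of the definitions of $R^T$ and of $P$. The only points requiring a little care are the degenerate cases — when $y=z$, or when one side of the cut $T_x$ is empty so that $R^T_x$ has a single equivalence class — and keeping straight which index in $R^T_x(y,z)$ plays the role of the deleted edge; neither causes any real difficulty.
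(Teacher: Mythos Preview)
Your proof is correct and follows exactly the same approach as the paper: both argue that $R^T_x(y,z)$ holds if and only if $x \notin P(y,z)$ (by the previous lemma identifying $P(y,z)$ with the geodesic), and that $x \notin P(y,z)$ is by definition equivalent to $R_x(y,z)$. The paper's version is simply terser, omitting the explicit treatment of the $y=z$ case and the closing commentary.
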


\begin{proof}
We have seen that $P(y,z)$ is the shortest path in $T$ from $y$ to $z$, and so $R^T_x(y,z)$ holds if and only if $x \not\in P(y,z)$. However, $x \not\in P(y,z)$ is equivalent to $R_x(y,z)$, by definition.
\end{proof}

\begin{proof}[Proof of Theorem~\ref{thm:bisect}]
Let $\sT$ be the set of isomorphism classes of $X$-labeled trees, and let $\sB$ be the set of bisection structures on $X$. We have a map $\Phi \colon \sT \to \sB$ by $\Phi(T)=R^T$. The construction $R \mapsto T(R)$ above yields a function $\Psi \colon \sB \to \sT$. Lemma~\ref{lem:bisect-4} shows that $\Phi \circ \Psi$ is the identity. It is easy to see directly that $\Psi \circ \Phi$ is also the identity; that is, if one starts with a tree $T$ then $T(R^T)$ is isomorphic to $T$.
\end{proof}

\subsection{Orientations} \label{ss:orbi}

We now discuss a variant of the above ideas. An \defn{oriented bisection structure} is a function
\begin{displaymath}
S \colon (X \times X) \setminus \Delta \to \{\pm \},
\end{displaymath}
where $\Delta$ denotes the diagonal, such that taking $R_x(y,z)$ to be relation defined by $S(x,y)=S(x,z)$, the collection $R=\{R_x\}$ is a bisection structure on $X$. This $R$ automatically satisfies Definition~\ref{defn:bisect}(a), so one only needs to consider Definition~\ref{defn:bisect}(b). In terms of $S$, this amounts to the following condition:
\begin{itemize}
\item[($\ast$)] Given distinct $x,y,z \in X$, at most one of the equalities
\begin{displaymath}
S(x,y)=S(x,z), \qquad S(y,x)=S(y,z), \qquad S(z,x)=S(z,y)
\end{displaymath}
fails to hold.
\end{itemize}
More informally, an oriented bisection structure is simply a bisection structure $R$ where for each $x$ we have labeled the equivalence classes of $R_x$ as $+$ and $-$. We made use of exactly this kind of structure in the proof of Theorem~\ref{thm:bisect}.

Define a \defn{directed tree} to be a tree in which each edge has been given a direction. If $T$ is an $X$-labeled directed tree then it induces an oriented bisection structure $S^T$ on $X$, as follows. If we delete edge $x$ from $T$, there are (at most) two resulting components; the edge $x$ points towards one of these components, and away from the other. We put $S^T(x,y)=+$ if $x$ points towards $y$'s component, and put $S^T(x,y)=-$ otherwise. One readily verifies that $S^T$ is an oriented bisection structure, and that the analog of Theorem~\ref{thm:bisect} holds in this setting. We give an example in \S \ref{ss:ex}.

\subsection{Enumeration}

The following proposition counts the structures we have considered.

\begin{proposition} \label{prop:enum}
Let $X$ be a finite set with $n$ elements. Put
\begin{align*}
N_1 &= \text{the number of bisection structures on $X$} \\
N_2 &= \text{the number of $X$-labeled trees up to isomorphism} \\
N_3 &= \text{the number of oriented bisection structures on $X$} \\
N_4 &= \text{the number of directed $X$-labeled trees up to isomorphism}
\end{align*}
Then
\begin{displaymath}
N_1=N_2=(n+1)^{n-2}, \qquad N_3=N_4=2^n (n+1)^{n-2}.
\end{displaymath}
The first formula is valid for $n \ge 2$, while the second is valid for $n \ge 1$.
\end{proposition}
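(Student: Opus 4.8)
The plan is to deduce everything from Theorem~\ref{thm:bisect} and its oriented analogue, plus the classical Cayley-type count of labeled trees. The bijections $N_1 = N_2$ and $N_3 = N_4$ are exactly the content of Theorem~\ref{thm:bisect} (in the unoriented and oriented versions, respectively), so the real work is the enumeration of $X$-labeled trees up to isomorphism. Here ``$X$-labeled'' means the \emph{edge} set, not the vertex set, is identified with $X$; the vertices are unlabeled and isomorphism is allowed to permute them. So I must count trees with $n$ labeled edges up to graph isomorphism fixing the edge labels.

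The cleanest route I would take is to relate edge-labeled trees to vertex-labeled trees by a standard trick: a tree with $n$ edges has $n+1$ vertices. Given an $X$-labeled tree with edges labeled by $\{1,\dots,n\}$, I would try to produce from it a tree on the vertex set $\{0,1,\dots,n\}$ (i.e.\ $n+1$ labeled vertices) in a canonical way, and check this is a bijection onto vertex-labeled trees on $n+1$ vertices. One natural attempt: direct the tree away from a distinguished leaf or use the edge labels to transfer the labeling to vertices via a ``parent edge'' assignment after rooting. Concretely, pick the unique structure that lets each non-root vertex inherit the label of the edge connecting it to its parent; rooting canonically (e.g.\ so the root is forced) then gives $n$ labeled non-root vertices and a distinguished root, i.e.\ a vertex-labeled tree on $n+1$ vertices. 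By Cayley's formula there are $(n+1)^{(n+1)-2} = (n+1)^{n-1}$ trees on $n+1$ labeled vertices; since rooting at a chosen vertex among $n+1$ multiplies by $n+1$, and here the root is the ``extra'' $(n+1)$-st label rather than a free choice, one gets $(n+1)^{n-1}/(n+1) = (n+1)^{n-2}$, matching the claim. I would make this correspondence precise and verify it is well-defined and bijective (the main thing to check is that the root can be recovered canonically, so no overcounting occurs, and that edge-label isomorphism on the tree side matches equality on the vertex-labeled side).

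Alternatively, and perhaps more robustly, I would count $N_1$ directly from Definition~\ref{defn:bisect} or count $N_3$ directly and then divide: an oriented bisection structure is a bisection structure together with, for each $x \in X$, a choice of labeling of the (at most two) equivalence classes of $R_x$ as $\pm$. When $R_x$ genuinely has two classes there are exactly two such labelings; the total number of choices over all $x$ is $2^n$ \emph{only if} every $R_x$ has two classes, which corresponds to $x$ not being a ``pendant''—but in a tree with $\ge 2$ edges every edge-deletion disconnects the tree into exactly two nonempty pieces (each containing at least one edge, once $n\ge 2$... actually each containing at least one vertex, and for the bisection structure on $X$ the classes can be empty when the component has no other edge). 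This subtlety is why the $n\ge 1$ versus $n\ge 2$ distinction appears, and I would handle the $n=1$ case (one edge, two vertices: one tree, one bisection structure which is vacuous, two orientations) separately. In general I would argue that passing from $N_1$ to $N_3$ multiplies by exactly $2^n$ because, via the tree picture from Theorem~\ref{thm:bisect}, each edge of a tree on $n+1\ge 3$ vertices separates it into two parts each containing $\ge 1$ vertex and hence the corresponding ``side'' function $S(x,-)$ has two genuinely distinguishable orientations regardless of whether a component contains further edges—wait, I need to be careful: the relation $R_x$ lives on $X\setminus\{x\}$, so a component with no other edges contributes an empty class. Still, the \emph{orientation} $S$ is a function to $\{\pm\}$ on all of $X\setminus\Delta$, so $S(x,y)$ is defined for every $y\ne x$; flipping the sign on the ``$x$ row'' always gives a distinct oriented structure inducing the same $R_x$. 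Hence the fiber of $N_3 \to N_1$ over each bisection structure has size exactly $2^n$, giving $N_3 = 2^n N_1$.

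The main obstacle I anticipate is pinning down the edge-labeled tree count $(n+1)^{n-2}$ with a clean, gap-free bijection to a known object—in particular making the canonical rooting unambiguous so that the division by $n+1$ is justified rather than asserted. If the direct bijection proves fiddly, the fallback is a generating-function or Prüfer-sequence argument: encode an edge-labeled tree on $n$ edges by adapting the Prüfer code, or count via the matrix-tree theorem applied to an appropriate weighted complete graph, both of which yield $(n+1)^{n-2}$ after the right bookkeeping. Everything else—the two bijections $N_1=N_2$, $N_3=N_4$ from Theorem~\ref{thm:bisect} and its oriented analogue, the factor $2^n$ from orientations, and the small-$n$ edge cases—is routine.
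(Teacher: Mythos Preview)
Your proposal is essentially correct and follows the same outline as the paper: $N_1=N_2$ and $N_3=N_4$ come from Theorem~\ref{thm:bisect} and its oriented analogue, and $N_2=(n+1)^{n-2}$ is the Cayley-type count of edge-labeled trees (which the paper simply cites from \cite[Proposition~2.1]{CameronTrees} rather than rederiving via your rooting bijection).

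The one point of divergence is the factor $2^n$. You argue $N_3=2^n N_1$ directly: for each $x$ the row $S(x,-)$ can be negated independently, and condition~$(\ast)$ is automatic because it is exactly Definition~\ref{defn:bisect}(b) for the underlying $R$. The paper instead proves $N_4=2^n N_2$ by invoking the fact that for $n\ge 2$ an $X$-labeled tree has no nontrivial automorphisms (again citing \cite{CameronTrees}), so the $2^n$ ways of directing its edges are pairwise non-isomorphic. Your route sidesteps the automorphism fact at this step---but notice that your own sketch for $N_2$ implicitly \emph{requires} it: the ``divide by $n+1$'' is legitimate precisely because each edge-labeled tree admits $n+1$ pairwise non-isomorphic rootings, which is exactly automorphism-triviality. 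So you have relocated that lemma rather than avoided it, and your phrase ``the root can be recovered canonically'' misdescribes the situation (the root is a genuine choice; the point is that distinct choices give non-isomorphic rooted objects).
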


\begin{proof}
The equality $N_1=N_2$ follows from Theorem~\ref{thm:bisect}. The explicit formula for $N_2$ follows from Cayley's theorem on trees; see \cite[Proposition~2.1]{CameronTrees}. The equality $N_3=N_4$ follows from the oriented analog of Theorem~\ref{thm:bisect}. If $n \ge 2$ then there are no automorphisms of an $X$-labeled tree $T$, since the group of unlabeled automorphisms acts faithfully on the edges (see the proof of \cite[Proposition~2.1]{CameronTrees}). It follows that if we direct the edges of $T$ in two different ways, the resulting directed $X$-labeled trees are non-isomorphic. Hence $N_4=2^n \cdot N_3$. The formula for $N_4$ when $n=1$ is easily verified directly. (Note that if $T$ is an $X$-labeled tree with one edge then the two ways of directing this edge yield isomorphic directed $X$-labeled trees.)
\end{proof}

\begin{remark}
The integer sequence defined by the formula $2^n \cdot (n+1)^{n-2}$ is discussed in \cite{OEIS}. We mention two other places where it occurs.
\begin{itemize}
\item Let $S_n=\bQ[x_i,y_i,z_i]_{1 \le i \le n}$, and let $R_n$ be the quotient of $S_n$ by the ideal generated by homogeneous $\fS_n$-invariants of positive degree. Here the symmetric group $\fS_n$ acts on $S_n$ by permuting each set of variables in the obvious manner. Haiman \cite[Fact~2.8.1]{Haiman} observed that the dimension of $R_n$ as a $\bQ$-vector space is $2^n (n+1)^{n-2}$ for $1 \le n \le 5$, and suggested this might be true for all $n$; as far as we know, this is still open.
\item Let $B_n(x)$ be the $n$th Morgan--Voyce polynomial. This is defined recursively by $B_0=B_1=1$ and $B_n=(x+2)B_{n-1} - B_{n-2}$. The disciminant of $B_{n+1}$ is $2^n(n+1)^{n-2}$ \cite[Table~5]{Florez}. \qedhere
\end{itemize}
\end{remark}

\section{Oligomorphic groups and measures} \label{s:olig}

\subsection{Oligomorphic groups}

An \defn{oligomorphic group} is a permutation group $(G, \Omega)$ such that $G$ has finitely many orbits on $\Omega^n$ for all $n \ge 1$. We refer to Cameron's book \cite{CameronBook} for general background on these groups.

Suppose we have an oligomorphic group $(G, \Omega)$. For a finite subset $A \subset \Omega$, let $G(A)$ be the subgroup of $G$ fixing each element of $A$. These subgroups form a neighborhood basis of the identity for a topology on $G$. This topology has three important properties: it is Hausdorff; it is non-archimedean (open subgroups form a neighborhood basis of the identity); and it is Roelcke pre-compact (if $U$ and $V$ are open subgroups then $U \backslash G/V$ is finite); see \cite[\S 2.2]{repst}. We say that a topological group is \defn{admissible} if it satisfies these three properties.

Although we ultimately care most about oligomorphic groups, our constructions only depend on the topology and not the specific permutation action, so we tend to work with admissible topological groups.

\subsection{Actions}

Let $G$ be an admissible topological group. We say that an action of $G$ on a set $X$ is \defn{smooth} if every point has open stabilizer. We use the term ``$G$-set'' to mean ``set equipped with a smooth action of $G$.'' We say that a $G$-set is \defn{finitary} if it has finitely many orbits. A product of two finitary $G$-sets is again a finitary $G$-set. See \cite[\S 2.3]{repst} for details.

A \defn{$\hat{G}$-set} is a $U$-set for some open subgroup $U$ of $G$, called a \defn{group of definition}; shrinking $U$ does not change the $\hat{G}$-set. A $\hat{G}$-set is called finitary if it is finitary with respect to some group of definition; this does not depend on the group of definition. If $f \colon X \to Y$ is a map of $G$-sets then the fiber over any point is a $\hat{G}$-set; this is one reason this concept is useful. The symbol $\hat{G}$ has no rigorous meaning on its own, but we think of it as an infinitesimal neighborhood of the identity. See \cite[\S 2.]{repst} for details.

\subsection{Measures}

Let $G$ be an admissible group. The following definition was introduced in \cite{repst}, and will be the primary concept studied in this paper:

\begin{definition} \label{defn:measure}
A \defn{measure} for $G$ valued in a commutative ring $k$ is a rule $\mu$ assigning to each finitary $\hat{G}$-set $X$ a quantity $\mu(X)$ in $k$ such that the following axioms hold (in which $X$ and $Y$ denote finitary $\hat{G}$-sets):
\begin{enumerate}
\item Isomorphism invariance: $\mu(X)=\mu(Y)$ if $X \cong Y$.
\item Normalization: $\mu(\bone)=1$, where $\bone$ is the one-point $\hat{G}$-set.
\item Conjugation invariance: $\mu(X^g)=\mu(X)$, where $X^g$ is the conjugate of the $\hat{G}$-set $X$ by $g \in G$.
\item Additivity: $\mu(X \amalg Y)=\mu(X)+\mu(Y)$.
\item Multiplicativity in fibrations: if $X \to Y$ is a map of transitive $U$-sets, for some open subgroup $U$, with fiber $F$ (over some point) then $\mu(X)=\mu(F) \cdot \mu(Y)$.
\end{enumerate}
We let $\cM_G(k)$ denote the set of $k$-valued measures for $G$.
\end{definition}

Given a measure, one obtains a theory of integration for functions on $G$-sets; see \cite[\S 3]{repst}. The construction of tensor categories in \cite{repst} is built on top of this theory of integration. For the present paper, however, we will not need integration.

There are a few concepts equivalent to the above notion of measure that we mention, simply to provide the reader with more intuition:
\begin{enumerate}
\item If $X \to Y$ is a map of $G$-sets then the fiber over any point is a $\hat{G}$-set, and every $\hat{G}$-set can be obtained in this manner. One can use this to reformulate the notion of measure as a rule that assigns to each such map $X \to Y$ (with $Y$ transitive and $X$ finitary) a quantity in $k$, such that certain conditions hold; see \cite[\S 4.5]{repst}. The advantage of this formulation is that it depends only on the category of $G$-sets.
\item A \defn{generalized index} is a rule assigning to each containment of open subgroups $U \subset V$ a quantity $\lbb U:V \rbb$ in $k$, satisfying properties similar to the usual index; see \cite[\S 3.6]{repst}. This concept is equivalent to measure, with $\mu$ corresponding to $\lbb -:- \rbb$ if $\lbb U:V \rbb=\mu(U/V)$ for all $V \subset U$.
\item Suppose that $\cC$ is a Fra\"iss\'e class with limit $\Omega$, and $G=\Aut(\Omega)$ is oligomorphic. A measure for $G$ is then equivalent to a rule assigning to each inclusion $X \subset Y$ in $\cC$ a value in $k$, such that certain conditions hold; see \cite[\S 6]{repst}. (Actually, this only gives a measure for $G$ relative to a stabilizer class.) This shows that measures are essentially combinatorial in nature.
\end{enumerate}
We will not use any of the above perspectives in this paper. However, we will give a combinatorial interpretation for our measures that is similar in spirit to (c).

There is one more concept connected to measures that we will use. Define a ring $\Theta(G)$ as follows: start with the polynomial ring in variables $[X]$, where $X$ varies over isomorphism classes of finitary $\hat{G}$-sets, and impose relations corresponding to Definition~\ref{defn:measure}(b,c,d,e). There is a measure $\mu_{\rm univ}$ valued in $\Theta(G)$ given by $\mu_{\rm univ}(X)=[X]$. This measure is universal, in the sense that if $\mu$ is a measure valued in some ring $k$ then there is a unique ring homomorphism $\phi \colon \Theta(G) \to k$ such that $\mu(X)=\phi(\mu_{\rm univ}(X))$. A complete understanding of measures for $G$ essentially amounts to computing the ring $\Theta(G)$.

\begin{example}
Let $\fS$ be the infinite symmetric group, and let $\Omega=\{1,2,3,\ldots\}$ be its domain. Given a complex number $t$, there is a unique $\bC$-valued measure $\mu_t$ for $\fS$ such that $\mu_t(\Omega)=t$. This measure satisfies $\mu_t(\Omega^{(n)})=\binom{t}{n}$, where $\Omega^{(n)}$ denotes the set of $n$-element subsets of $\Omega$. The ring $\Theta(\fS)$ is the ring of integer-valued polynomials in a single variable. These statements are proven in \cite[\S 15]{repst}.
\end{example}

\subsection{Minimal maps}

Let $G$ be an admissible group and let $\phi \colon X \to Y$ be a map of transitive $G$-sets. We say that $\phi$ is \defn{minimal} if it is not an isomorphism and does not factor non-trivially, i.e., given $\phi=\beta \circ \alpha$, where $\beta$ and $\alpha$ are maps of transitive $G$-sets, either $\alpha$ or $\beta$ is an isomorphism. The following two results show the significance of this notion.

\begin{proposition} \label{prop:min-subgp}
Given an open subgroup $V$ of $G$, there are only finitely many subgroups $U$ of $G$ containing $V$.
\end{proposition}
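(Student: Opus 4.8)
The plan is to translate the statement into a finiteness result about the action of $V$ on the coset space, using Roelcke pre-compactness. Let me set up the key correspondence. Given $V \subset U \subset G$ with $U$ open, the subgroup $U$ is determined by the coset space $G/U$ as a transitive $G$-set together with the distinguished point $eU$; equivalently, $U = \mathrm{Stab}_G(eU)$. So subgroups $U$ containing $V$ correspond bijectively to $V$-orbits in $G/U$-type data — more precisely, to the following: a subgroup $U \supseteq V$ gives the transitive $G$-set $G/U$ with a marked point, and since $V \subseteq U$, that marked point is $V$-fixed. So it suffices to bound the number of pairs (transitive $G$-set $X$, a $V$-fixed point $x \in X$) up to the obvious equivalence, where $X$ ranges over quotients of $G/V$.

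Here is the cleaner route. First I would observe that every open subgroup $U \supseteq V$ arises as the stabilizer of a point in the quotient $G$-set $G/V \twoheadrightarrow G/U$, and the fibers of this surjection are the cosets $U/V$. Now $G/V$ is a transitive $G$-set; restrict the action to $V$ itself. By Roelcke pre-compactness applied to the pair of open subgroups $V$ and $V$, the double coset space $V \backslash G / V$ is finite — that is, $V$ has finitely many orbits on $G/V$. Call this number $m$. The key claim is then: if $U \supseteq V$ is an open subgroup, then the subset $U/V \subseteq G/V$ is a union of $V$-orbits (since $V \subseteq U$, left multiplication by $V$ preserves the coset $U$, hence permutes the cosets of $V$ inside $U$), and moreover $U$ is recovered from the subset $U/V$ as $\{g \in G : gV \subseteq U/V\}$. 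Therefore the map $U \mapsto U/V$ is an injection from $\{U : V \subseteq U \subseteq G,\ U \text{ open}\}$ into the set of $V$-stable subsets of $G/V$. Since $G/V$ has only $m$ many $V$-orbits, there are at most $2^m$ such subsets, so there are finitely many such $U$.

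The steps in order: (1) note that $U$ open with $V \subseteq U$ implies $U/V$ is a well-defined subset of $G/V$ invariant under the left $V$-action, and that $U = \bigcup_{gV \in U/V} gV$, so $U \mapsto U/V$ is injective; (2) invoke Roelcke pre-compactness (admissibility) for the pair $(V,V)$ to conclude $V \backslash G/V$ is finite, say of size $m$; (3) conclude that the $V$-stable subsets of $G/V$ number at most $2^m$, bounding the subgroups $U$. One small point to verify in step (1) is that distinct $U$ give distinct subsets $U/V$ of $G/V$ — this is immediate since $U$ is the union of its $V$-cosets, so $U/V$ determines $U$.

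I do not expect a serious obstacle here; the proof is essentially a repackaging of Roelcke pre-compactness. The only thing requiring a moment's care is making sure the correspondence in step (1) is stated at the level of sets-with-$V$-action correctly and that one does not need $U$ to be among a pre-specified list — but since every open $U \supseteq V$ literally contains $V$, its image in $G/V$ is genuinely a $V$-stable subset, and nothing more is needed. (One could alternatively phrase the whole argument via minimal maps and the fact that any chain $V = U_0 \subsetneq U_1 \subsetneq \cdots$ of open subgroups refines into minimal steps, but the double-coset counting argument is more direct and self-contained.)
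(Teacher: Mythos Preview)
Your argument is correct and is essentially the same as the paper's: both observe that any open $U \supseteq V$ is a union of $V$-double cosets (equivalently, $U/V$ is a union of $V$-orbits on $G/V$), and then invoke Roelcke pre-compactness to conclude $V \backslash G / V$ is finite, giving at most $2^m$ possibilities for $U$. The paper states this in two sentences; your version spells out the injectivity of $U \mapsto U/V$ more explicitly, but the content is identical.
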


\begin{proof}
If $U$ contains $V$ then $U$ is a union of double cosets of $V$. Since $V \backslash G/V$ is finite, there are thus only finitely many possibilities for $U$.
\end{proof}

\begin{proposition}
Any map of transitive $G$-sets that is not an isomorphism can be factored into a sequence of minimal maps.
\end{proposition}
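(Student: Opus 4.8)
The plan is to translate the statement about transitive $G$-sets into a statement about chains of open subgroups, and then to invoke Proposition~\ref{prop:min-subgp}.

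First I would fix a base point. Given a non-isomorphism $\phi \colon X \to Y$ of transitive $G$-sets, choose $x \in X$, set $y = \phi(x)$, and let $U = G_x$ and $V = G_y$ be the (open) stabilizers. Since $\phi$ carries $x$ to $y$ we have $U \subseteq V$, and the usual dictionary for transitive smooth actions identifies $\phi$ with the natural projection $G/U \to G/V$; the hypothesis that $\phi$ is not an isomorphism says exactly that $U \ne V$. Next I would record the elementary observation that factorizations of $\phi$ through a transitive $G$-set correspond to subgroups $W$ with $U \subseteq W \subseteq V$: a factorization $\phi = \beta \circ \alpha$ yields $W = G_{\alpha(x)}$, and conversely such a $W$ gives the evident factorization $G/U \to G/W \to G/V$. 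Under this correspondence, the factorization is trivial (one of the two maps an isomorphism) precisely when $W \in \{U, V\}$; hence $\phi$ is minimal if and only if there is no subgroup of $G$ strictly between $U$ and $V$.

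Then I would construct the factorization. By Proposition~\ref{prop:min-subgp}, the set of subgroups $W$ with $U \subseteq W \subseteq V$ is finite. Starting from $U_0 = U$ and repeatedly letting $U_i$ be a minimal element of the nonempty finite set of subgroups that properly contain $U_{i-1}$ and are contained in $V$, one obtains a strictly increasing chain $U = U_0 \subsetneq U_1 \subsetneq \cdots \subsetneq U_m = V$; it terminates by finiteness, and by construction no subgroup lies strictly between consecutive terms. By the previous paragraph the induced maps $G/U_{i-1} \to G/U_i$ are minimal, and their composite is $\phi$, as desired.

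There is essentially no hard step here; the only points requiring care are the bookkeeping in the base-point argument (verifying that every factorization really does arise from such a $W$, compatibly with the identifications $X \cong G/U$, $Y \cong G/V$) and the remark that having no subgroup strictly between $U_{i-1}$ and $U_i$ among subgroups contained in $V$ is the same as having none at all, which is immediate since any subgroup between $U_{i-1}$ and $U_i$ automatically lies between $U$ and $V$. The one genuinely substantive input is the finiteness of the interval of subgroups, which is exactly Proposition~\ref{prop:min-subgp} and ultimately rests on Roelcke pre-compactness.
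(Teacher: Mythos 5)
Your argument is correct and follows essentially the same route as the paper: reduce to a proper inclusion of open stabilizers, invoke Proposition~\ref{prop:min-subgp} to get finiteness of the interval of subgroups, refine to a chain with no strictly intermediate subgroups, and observe that the induced maps of quotients are minimal. The extra bookkeeping you supply (the dictionary between factorizations of $\phi$ and intermediate subgroups) is exactly what the paper leaves implicit.
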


\begin{proof}
It suffices to show that if $V \subset U$ is a proper inclusion of open subgroups then the natural map $G/V \to G/U$ admits such a factorization. Choose a strict chain of open subgroups
\begin{displaymath}
V=W_0 \subset W_1 \subset \cdots \subset W_n=U
\end{displaymath}
where $W_i$ is minimal over $W_{i-1}$; such a chain exists by Proposition~\ref{prop:min-subgp}. The map $G/V \to G/U$ factors as the composition of the minimal maps $G/W_{i-1} \to G/W_i$, which completes the proof.
\end{proof}

\subsection{Another view on measures} \label{ss:E-meas}

We now give a slight reformulation of the definition of measure that will be more convenient in our particular case. For an admissible group $G$, let $\Omega(G)$ be its \defn{Burnside ring}. This is the free $\bZ$-module on the set of isomorphism classes of transitive $G$-sets. For a transitive $G$-set $X$, we let $\lbb X \rbb$ denote its class in $\Omega(G)$. For a general finitary $G$-set $X$, we define $\lbb X \rbb=\sum_{i=1}^n \lbb Y_i \rbb$, where $Y_1, \ldots, Y_n$ are the $G$-orbits on $X$. As the name suggests, $\Omega(G)$ is a ring, via $\lbb X \rbb \cdot \lbb Y \rbb=\lbb X \times Y \rbb$.

Let $\sE$ be a collection of open subgroups of $G$ that is stable under conjugation, and such that every open subgroup contains some member of $\sE$. We introduce the following notion:

\begin{definition} \label{defn:E-meas}
An \defn{$\sE$-measure} valued in a ring $k$ is a rule $\mu_{\bullet}$ assigning to each $U \in \sE$ an additive map $\mu_U \colon \Omega(U) \to k$ satisfying the following axioms:
\begin{enumerate}
\item We have $\mu_U(\bone)=1$ for any $U \in \sE$.
\item Given subgroups $V \subset U$ in $\sE$ and a finitary $U$-set $X$, we have $\mu_U(X)=\mu_V(X)$.
\item Given $U \in \sE$, a finitary $U$-set $X$ and $g \in G$, we have $\mu_U(X)=\mu_{U^g}(X^g)$, where the superscript denotes conjugation.
\item Given $U \in \sE$ and a map $\pi \colon X \to Y$ of transitive $U$-sets, we have $\mu_U(X)=\mu_U(Y) \mu_V(F)$, where $F=\pi^{-1}(y)$ for some $y \in Y$, and $V \in \sE$ stabilizes $y$.
\end{enumerate}
We have written $\mu_U(X)$ in place of $\mu_U(\lbb X \rbb)$ above. Let $\cM^{\sE}_G(k)$ denote the set of $\sE$-measures for $G$ valued in $k$.
\end{definition}

The following is the main result we require on this concept.

\begin{proposition} \label{prop:meas-E-meas}
We have a natural isomorphism $\cM_G(k) \to \cM^{\sE}_G(k)$.
\end{proposition}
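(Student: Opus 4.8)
The statement asserts that the restriction-style reformulation $\cM^{\sE}_G(k)$ of measures is canonically isomorphic to $\cM_G(k)$, for any conjugation-stable collection $\sE$ of open subgroups that is cofinal among all open subgroups (every open subgroup contains a member of $\sE$). The plan is to construct maps in both directions and check they are mutually inverse. Going from a measure $\mu$ to an $\sE$-measure is the easy direction: given $U \in \sE$ and a transitive $U$-set $X$, pick $x \in X$ with open stabilizer $W = U_x$, and set $\mu_U(X) := \mu(U/W)$, where $U/W$ is viewed as a finitary $\hat G$-set with group of definition $U$. One checks this is well-defined (independent of $x$ by conjugation invariance for $\mu$, and independent of the group of definition since shrinking the group of definition does not change the $\hat G$-set), extend $\bZ$-linearly to $\Omega(U)$, and then verify axioms (a)--(d) of Definition~\ref{defn:E-meas}. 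Axioms (a) and (c) are immediate from normalization and conjugation invariance of $\mu$; axiom (d) is exactly multiplicativity in fibrations; axiom (b), compatibility under $V \subset U$, holds because the $U$-set $X$ and the $V$-set $\res^U_V X$ determine the same $\hat G$-set, and a finitary $U$-set restricts to a finitary $V$-set with the same orbit decomposition up to the ambient $\hat G$-set structure, so $\mu$ assigns the same value.

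**The reverse direction.** Given an $\sE$-measure $\mu_\bullet$, I must define $\mu(X)$ for an arbitrary finitary $\hat G$-set $X$. Choose a group of definition $U_0$ for $X$; by cofinality pick $U \in \sE$ with $U \subset U_0$, so $X$ is a finitary $U$-set, and set $\mu(X) := \mu_U(X)$. Well-definedness in $U$ follows from axiom (b) of Definition~\ref{defn:E-meas}: if $U, U' \in \sE$ are two choices, intersect a common sub-member (again using cofinality and that $\sE$ need not be closed under intersection, so pass to a member below $U \cap U'$) and apply (b) twice. Then one verifies the five axioms of Definition~\ref{defn:measure}. Isomorphism invariance and normalization are clear. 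Conjugation invariance $\mu(X^g) = \mu(X)$ follows from axiom (c) of the $\sE$-measure after noting that if $U$ is a group of definition for $X$ lying in $\sE$, then $U^g \in \sE$ is one for $X^g$. Additivity follows from $\bZ$-linearity of each $\mu_U$ on $\Omega(U)$, choosing a single $U \in \sE$ small enough to be a group of definition for both $X$ and $Y$. Multiplicativity in fibrations is axiom (d), again after descending to a common member of $\sE$ stabilizing the chosen base point and serving as a group of definition for the total space.

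**Mutual inverseness and the main obstacle.** Finally I check the two composites are identities, which is essentially bookkeeping: starting from $\mu$, the constructed $\sE$-measure evaluates on $X$ to $\mu(X)$ by unwinding the definitions; starting from $\mu_\bullet$, the constructed measure restricts on each $U \in \sE$ to $\mu_U$ by the same unwinding, using axiom (b) to handle the choice of a smaller group of definition. I expect the one genuinely delicate point to be the well-definedness arguments that thread through \emph{both} directions: specifically, verifying that all the operations — restricting a $U$-set to a $V$-set, shrinking the group of definition of a $\hat G$-set, and passing between the $\hat G$-set viewpoint and the ``$U$-set for $U \in \sE$'' viewpoint — are mutually compatible, so that every value is independent of every choice. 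This is where the hypotheses on $\sE$ (conjugation-stability and cofinality) get used, and it is worth stating cleanly as a preliminary observation: for any finitary $\hat G$-set $X$ there exists $U \in \sE$ that is a group of definition, and any two such are comparable after passing to a third in $\sE$. Once that is in hand, every axiom check is a one-line translation between Definition~\ref{defn:measure} and Definition~\ref{defn:E-meas}. I would also remark that naturality in $k$ is automatic since every construction is defined by equalities of ring elements pulled back along ring maps.
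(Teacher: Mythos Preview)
Your approach matches the paper's in outline: restrict a measure to get an $\sE$-measure, and conversely extend an $\sE$-measure by choosing a group of definition in $\sE$. The well-definedness arguments and most axiom checks are indeed routine, as you say.

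However, there is a genuine gap in your verification of axiom~(e) (multiplicativity in fibrations) for the measure $\mu$ built from an $\sE$-measure $\mu_\bullet$. You write that this ``is axiom (d), again after descending to a common member of $\sE$ stabilizing the chosen base point and serving as a group of definition for the total space.'' But axiom~(e) concerns a map $\pi \colon X \to Y$ of transitive $U$-sets for an \emph{arbitrary} open subgroup $U$, which need not lie in $\sE$. If you pass to some $V \in \sE$ with $V \subset U$, then $X$ and $Y$ are in general no longer $V$-transitive, so axiom~(d) of Definition~\ref{defn:E-meas} does not apply directly. (And if $V$ stabilizes the base point $y$, then $Y$ is certainly not $V$-transitive unless it is a single point.) So this is not a one-line translation.

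The paper fills this gap as follows. Choose $V \in \sE$ with $V \subset U$, decompose $Y = \bigsqcup_i Y_i$ into $V$-orbits, set $X_i = \pi^{-1}(Y_i)$, pick $y_i \in Y_i$ with fiber $F_i = \pi^{-1}(y_i)$, and choose $W \in \sE$ fixing all the $y_i$. Axiom~(d) (together with additivity, to handle a possibly non-transitive total space over a transitive base) gives $\mu(X_i) = \mu(Y_i)\,\mu(F_i)$. The crucial extra step is to observe that each $F_i$ is $U$-conjugate to the original fiber $F$ (since $U$ acts transitively on $Y$), so conjugation invariance of $\mu$ gives $\mu(F_i) = \mu(F)$; summing over $i$ then yields $\mu(X) = \mu(Y)\,\mu(F)$. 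This orbit-decomposition-plus-conjugation argument is the one nontrivial ingredient in the proof, and your proposal omits it.
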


\begin{proof}
Suppose that $\mu$ is a measure for $G$. For an open subgroup $U$, define $\mu_U \colon \Omega(U) \to k$ by $\mu_U(X)=\mu(X)$. It is clear that $\mu_{\bullet}$ is an $\sE$-measure, and that $\mu$ can be recovered from $\mu_{\bullet}$. We thus have an injective map $\Phi \colon \cM_G(k) \to \cM^{\sE}_G(k)$.

We now show that $\Phi$ is surjective, which will complete the proof. Let $\mu_{\bullet}$ be a given $\sE$-measure. We define a measure $\mu$, as follows. Let $X$ be a finitary $\hat{G}$-set. Choose a group of definition $U$ for $X$ that belongs to $\sE$, and put $\mu(X)=\mu_U(X)$. This is independent of the choice of $U$. Indeed, suppose $U'$ is a second group of definition belonging to $\sE$. Then $U \cap U'$ is an open subgroup of $G$, and thus contains some $V \in \sE$ by hypothesis. We then have $\mu_U(X)=\mu_V(X)=\mu_{U'}(X)$, since $\mu_{\bullet}$ is compatible with restriction. Clearly, $\mu_{\bullet}=\Phi(\mu)$, provided that $\mu$ is a measure, so it suffices to show this. One easily sees that that $\mu$ satisfies axioms (a)--(d) of Definition~\ref{defn:measure}.

We now verify axiom (e). Thus let $\pi \colon X \to Y$ be a map of transitive $U$-sets, for some open subgroup $U$, and let $F=\pi^{-1}(y)$ for some point $y \in Y$. Let $V \subset U$ be an open subgroup contained in $\sE$. Let $Y=\bigsqcup_{i=1}^n Y_i$ be the decomposition of $Y$ into orbits of $V$ and let $X_i=\pi^{-1}(Y_i)$. Let $y_i \in Y_i$ be any point, and let $F_i=\pi^{-1}(y_i)$ be the fiber over it. Let $W \subset V$ be an open subgroup in $\sE$ fixing each $y_i$. We have
\begin{displaymath}
\mu(X_i)=\mu_V(X_i)=\mu_V(Y_i) \mu_W(F_i) = \mu(Y_i) \mu(F_i) = \mu(Y_i) \mu(F).
\end{displaymath}
In the first step, we used the definition of $\mu$; in the second step, we used Definition~\ref{defn:E-meas}(d); in the third step, we again used the definition of $\mu$; and in the final step, we used that $F_i$ is conjugate to $F$ by $U$ (since $U$ acts transitively on $Y$), and that $\mu$ is conjugation invariant. Summing the above equation over $i$, we find $\mu(X)=\mu(Y) \mu(F)$, as required.
\end{proof}

We note that since measures are multiplicative, i.e., $\mu(X \times Y)=\mu(X) \cdot \mu(Y)$, the above proposition shows that if $\mu_{\bullet}$ is an $\sE$-measure then each map $\mu_U \colon \Omega(U) \to k$ is a ring homomorphism. Our next result can simplify the task of verifying Definition~\ref{defn:E-meas}(d).

\begin{proposition} \label{prop:minimal}
Suppose that $\sE$ is downwards closed, meaning that if $U \in \sE$ and $V \subset U$ is an open subgroup then $V \in \sE$. For each $U \in \sE$, let $\mu_U \colon \Omega(U) \to k$ be an additive map. Suppose $\mu_{\bullet}$ satisfies Definition~\ref{defn:E-meas}(a,b,c) as well as the following condition:
\begin{itemize}
\item[(d')] Given $U \in \sE$ and a minimal map $\pi \colon X \to Y$ of transitive $U$-sets, we have $\mu_U(X)=\mu_U(Y) \mu_V(F)$, where $F=\pi^{-1}(y)$ for some $y \in Y$, and $V \in \sE$ stabilizes $y$.
\end{itemize}
Then $\mu_{\bullet}$ also satisfies Definition~\ref{defn:E-meas}(d), and is thus an $\sE$-measure.
\end{proposition}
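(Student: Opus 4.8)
The plan is to reduce to the case of a map of transitive $U$-sets, factor it through minimal maps, apply (d$'$) to each, and then handle the resulting ``fiber bookkeeping'' by a separate induction. Write such a map as $\pi\colon U/P\to U/Q$ with $P\subseteq Q\subseteq U$; the case $P=Q$ is trivial. By the proposition that every non-isomorphism of transitive $G$-sets factors into minimal maps, choose a chain $P=P_0\subsetneq P_1\subsetneq\cdots\subsetneq P_m=Q$ with each step minimal, so that $\pi=\pi_m\circ\cdots\circ\pi_1$ where $\pi_i\colon U/P_{i-1}\to U/P_i$ is minimal with fiber $P_i/P_{i-1}$ over $eP_i$. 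Pick $V\in\sE$ with $V\subseteq P_0$; since $\sE$ is downwards closed, $V$ is a legal choice for (d$'$) at every stage (and interchangeable with any other legal choice by restriction-invariance). Applying (d$'$) to each $\pi_i$ and multiplying gives
\[
\mu_U(U/P)=\mu_U(U/Q)\cdot\prod_{i=1}^m\mu_V(P_i/P_{i-1}),
\]
so it suffices to prove the ``chain rule'' $\prod_{i=1}^m\mu_V(P_i/P_{i-1})=\mu_V(Q/P)$, all sets regarded as $V$-sets. Inducting on $m$, this reduces to the two-term statement: for nested open subgroups $V\subseteq A\subseteq B\subseteq C$ with $V\in\sE$,
\[
(\Psi)\qquad \mu_V(C/A)=\mu_V(C/B)\cdot\mu_V(B/A).
\]

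The statement $(\Psi)$ is the real content, and the delicate point is that the ambient group $C$ need not lie in $\sE$, so (d$'$) cannot be invoked with $C$ as ambient group. To get around this I would restrict the fibration $\pi\colon C/A\to C/B$ to the group $V$: decompose $C/B$ into $V$-orbits $O_g\cong V/V_g$ with $V_g=V\cap gBg^{-1}$ (which lies in $\sE$ by downward-closedness), decompose each $\pi^{-1}(O_g)$ into $V$-orbits $O'$, and observe that each $O'\to O_g$ is then a map of transitive $V$-sets with $V\in\sE$. Assuming one already knows the analogue of the present proposition for maps of transitive $V$-sets — call this statement $(\Phi)$, asserting that (d$'$) extends to all maps of transitive $W$-sets for $W\in\sE$ — one applies $(\Phi)$ to each $O'\to O_g$, sums over the $O'$ above a fixed $O_g$, and uses additivity to identify the total fiber-contribution with $\mu_{V_g}$ of the fiber of $\pi^{-1}(O_g)\to O_g$ over a point; that fiber is $B/A$ with a conjugated and restricted action, so by conjugation- and restriction-invariance its value is $\mu_V(B/A)$. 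Hence $\mu_V(\pi^{-1}(O_g))=\mu_V(O_g)\,\mu_V(B/A)$, and summing over $g$ gives $(\Psi)$.

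Thus $(\Phi)$ and $(\Psi)$ feed into one another and must be proved by a single induction. I would induct on the length $\lambda$ of a longest chain of open subgroups between the two relevant subgroups — finite by Proposition~\ref{prop:min-subgp} — using $\lambda(P,Q)$ as the complexity of $(\Phi)$ for $U/P\to U/Q$ and $\lambda(A,B)$ as the complexity of $(\Psi)$ for $V\subseteq A\subseteq B\subseteq C$, and proving the instances in the order $\Phi_1,\Psi_1,\Phi_2,\Psi_2,\dots$. Deriving $\Phi_n$ is straightforward: peel off one minimal step as above, apply $\Phi$ at complexity $<n$ to the rest, and $\Psi$ at complexity $1$ (both strictly below $n$, or absent when $n=1$). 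The main obstacle is to check that the $\Phi$-instances invoked in the proof of $\Psi_n$ sit at complexity $\le n$: each such instance is a map $O'\to O_g$ whose relevant subgroups are $V\cap hAh^{-1}\subseteq V\cap hBh^{-1}$ for a suitable $h$, so one needs $\lambda(V\cap hAh^{-1},\,V\cap hBh^{-1})\le\lambda(A,B)$; this holds because intersecting a maximal chain from $hAh^{-1}$ to $hBh^{-1}$ with $V$ yields a (possibly redundant) chain of no greater length, and conjugation is a lattice isomorphism. Since this bound is only $\le n$, placing $\Phi_n$ before $\Psi_n$ in the order is exactly what keeps the induction well-founded. Finally, an arbitrary map of transitive $U$-sets as in Definition~\ref{defn:E-meas}(d) is precisely of the form treated above, so $(\Phi)$ for $W=U$ is the desired conclusion.
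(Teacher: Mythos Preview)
Your argument has a real gap, and it stems from overlooking a simplification that the paper exploits.

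The gap is in the complexity bound. You assert $\lambda(V\cap hAh^{-1},\,V\cap hBh^{-1})\le\lambda(A,B)$ on the grounds that intersecting a maximal chain from $hAh^{-1}$ to $hBh^{-1}$ with $V$ produces a chain of no greater length. But $\lambda$ is defined as the \emph{maximum} chain length, so exhibiting one short chain does not bound it; you would need every chain between the intersected groups to arise this way. Subgroup lattices do not in general satisfy a Jordan--H\"older property (maximal chains can have different lengths already in finite groups such as $S_4$), and nothing prevents intersection with $V$ from increasing the maximal chain length. Thus your mutual $\Phi$/$\Psi$ induction is not shown to be well-founded: already $\Psi_1$ may invoke $\Phi_k$ for $k$ arbitrarily large.

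The reason you were forced into this detour is the claim that ``the ambient group $C$ need not lie in $\sE$.'' In your own application it always does: you start with $U\in\sE$, and your $A,B,C$ are among the $P_i\subset U$, so by downward closedness every $P_i\in\sE$. Once you use this, there is no need to restrict to $V$ and decompose into orbits at all. The paper simply proves, by a single induction on $\delta(W\subset V)$, that $\mu_U(U/W)=\mu_U(U/V)\,\mu_V(V/W)$ whenever $W\subset V\subset U$ lie in $\sE$. The key point is that since $V\in\sE$, condition (d$'$) may be applied with $V$ as ambient group, not only with $U$. Choosing $W'$ minimal over $W$ inside $V$, one applies (d$'$) to the minimal $U$-map $U/W\to U/W'$, the inductive hypothesis to $W'\subset V\subset U$, and (d$'$) again to the minimal $V$-map $V/W\to V/W'$; combining the three equalities gives the result. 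No orbit decomposition, no conjugation bookkeeping, and no chain-length comparison is needed.
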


\begin{proof}
Given open subgroups $V \subset U$ of $G$, define $\delta(V \subset U)$ to be the maximal $n$ for which there exists a chain $V=W_0 \subsetneq \cdots \subsetneq W_n=U$ of subgroups. This is defined by Proposition~\ref{prop:min-subgp}. We note that $\delta(V \subset U)=0$ if and only if $V=U$, and $\delta(V \subset U)=1$ if and only if $V \ne U$ but there is no subgroup strictly between $V$ and $U$.

To prove Definition~\ref{defn:E-meas}(d), it suffices to show
\begin{displaymath}
\mu_U(U/W)=\mu_U(U/V) \mu_V(V/W)
\end{displaymath}
whenever $W \subset V \subset U$ are subgroups in $\sE$. We proceed by induction on $\delta(W \subset V)$. If $\delta(W \subset V)=0$ the statement is clear. Suppose now that $\delta(W \subset V)$ is positive, and let $W'$ be a minimal subgroup over $W$ contained in $V$, so that $\delta(W' \subset V)<\delta(W \subset V)$. Note that $U/W \to U/W'$ is a minimal map of transitive $U$-sets, and $V/W \to V/W'$ is a minimal map of transitive $V$-sets. We have
\begin{align*}
\mu_U(U/W)
&= \mu_U(U/W') \mu_{W'}(W'/W) \\
&= \mu_U(U/V) \mu_V(V/W') \mu_{W'}(W'/W) \\
&= \mu_U(U/V) \mu_V(V/W)
\end{align*}
where in the first step we used (d'), in the second step the inductive hypothesis, and in the third (d') again. The result follows.
\end{proof}

\section{The colored circle and its symmetries} \label{s:group}

\subsection{The circle}

Fix a countable set $\bS$ with an everywhere dense cyclic order; for example, one can take the roots of unity in the complex unit circle. Let $\Sigma$ be a non-empty finite set and let $\sigma \colon \bS \to \Sigma$ be a function such that $\sigma^{-1}(a)$ is dense for every $a \in \Sigma$. We regard $\sigma$ as a coloring of $\bS$. Let $G$ be the automorphism group of $\bS$, i.e., the group of all self-bijections preserving the cyclic ordering and the coloring.

\begin{proposition} \label{prop:S-homo}
We have the following:
\begin{enumerate}
\item $\bS$ is a homogeneous structure: if $X$ and $Y$ are finite subsets of $\bS$ and $i \colon X \to Y$ is an isomorphism (i.e., a bijection preserving the induced cyclic orders and colorings) then there exists $g \in G$ such that $i = g \vert_X$.
\item $\bS$ is the Fra\"iss\'e limit of the class of finite sets equipped with a cyclic order and $\Sigma$-coloring; in particular, $\bS$ is independent of the choice of $\sigma$, up to isomorphism.
\item The group $G$ is oligomorphic (with respect to its action on $\bS$).
\end{enumerate}
\end{proposition}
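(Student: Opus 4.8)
The plan is to prove part (a) first—homogeneity—and then derive (b) and (c) as relatively formal consequences. For (a), suppose $X, Y \subset \bS$ are finite and $i \colon X \to Y$ is an isomorphism of cyclically-ordered $\Sigma$-colored sets. Enumerate $X = \{x_1, \dots, x_m\}$ in cyclic order, so $Y = \{i(x_1), \dots, i(x_m)\}$ is also in cyclic order with the same color sequence. The idea is to build $g \in G$ by a back-and-forth-free direct argument: the complement $\bS \setminus X$ is a disjoint union of $m$ ``open arcs'' (the points strictly between consecutive $x_j$ and $x_{j+1}$ in the cyclic order), and similarly for $\bS \setminus Y$. I would argue that each such arc, with its induced total order and coloring (inherited from the cyclic order once we cut at an endpoint), is isomorphic to $\bL = \bS \setminus \{\infty\}$—i.e., it is a countable dense total order in which each color class is dense and which has no endpoints. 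This requires checking that density of each $\sigma^{-1}(a)$ in $\bS$ forces density of each color in every arc, which follows because any subinterval of an arc is a subinterval of $\bS$. Granting this, Cantor's theorem (in the colored form: any two countable dense total orders without endpoints whose color classes are all dense are isomorphic, by back-and-forth) gives an order- and color-preserving bijection between the $j$-th arc of $\bS\setminus X$ and the $j$-th arc of $\bS\setminus Y$. Gluing these with $i$ on the finite sets yields the desired $g$; one checks $g$ preserves the cyclic order because it preserves it on each arc and respects the cyclic arrangement of the arcs.

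The main obstacle, and the step deserving the most care, is precisely this colored Cantor back-and-forth: one must verify that at each stage of the back-and-forth, when extending a finite partial isomorphism by a new point $p$ of color $a$ lying in some open interval $(u,v)$ of the source, the target has a point of color $a$ in the corresponding interval—which is exactly where density of $\sigma^{-1}(a)$ is used. This is standard but should be stated carefully as a lemma (perhaps the paper has already done so, or will in \S\ref{s:line}); I would isolate it as: \emph{any countable total order without endpoints in which each of finitely many color classes is dense is unique up to color-preserving isomorphism, and is homogeneous.} Everything else is bookkeeping about cutting a cyclic order into arcs.

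For part (b): homogeneity plus the fact that the class $\cC$ of finite cyclically-ordered $\Sigma$-colored sets has the hereditary property, the joint embedding property, and the amalgamation property (all easy to check—amalgamation of finite cyclic orders over a common subset is elementary, and colorings just come along) shows, by the uniqueness clause in Fra\"iss\'e's theorem, that $\bS$ is \emph{the} Fra\"iss\'e limit of $\cC$, provided we also observe $\bS$ is countable and realizes every member of $\cC$ as a substructure (clear, by density) and is ultrahomogeneous (part (a)). Independence from $\sigma$ up to isomorphism is then immediate since the Fra\"iss\'e limit depends only on $\cC$, which depends only on $\#\Sigma$. For part (c): $G$ is oligomorphic iff it has finitely many orbits on $\bS^n$ for each $n$; by homogeneity, two tuples $(a_1,\dots,a_n)$ and $(b_1,\dots,b_n)$ lie in the same $G$-orbit iff the map $a_j \mapsto b_j$ is a partial isomorphism, i.e., iff they have the same ``type''—the same pattern of coincidences among coordinates, the same induced cyclic order on the distinct values, and the same colors. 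There are only finitely many such types for each fixed $n$ (finitely many partitions of $\{1,\dots,n\}$, finitely many cyclic orders on the blocks, and $\#\Sigma$ choices of color per block), so $G$ has finitely many orbits on $\bS^n$, as required. $\qed$
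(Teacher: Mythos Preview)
Your proof is correct, and for parts (b) and (c) it proceeds essentially as the paper does. For part (a), however, you take a genuinely different route. You decompose $\bS \setminus X$ and $\bS \setminus Y$ into arcs, invoke a colored version of Cantor's back-and-forth theorem to produce an isomorphism between each pair of corresponding arcs, and glue these together with $i$ to obtain $g$. The paper instead establishes the one-point extension property (``f-injectivity'': every embedding of a finite structure into $\bS$ extends along any inclusion into a one-larger finite structure) by a short direct argument using density, and then cites a general result from \cite{homoten} that f-injectivity implies homogeneity via back-and-forth on $\bS$ itself. Your approach is more self-contained and hands you the automorphism $g$ explicitly; the paper's is more modular, since f-injectivity is exactly the property reused in (b) to see that every finite structure embeds. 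Both arguments ultimately rest on back-and-forth---yours applied to each arc separately, the paper's applied globally to $\bS$. One minor remark: in (b) you check the amalgamation property for $\cC$ directly, whereas the paper bypasses this by observing that any countable homogeneous structure whose age is $\cC$ is automatically the Fra\"iss\'e limit of $\cC$; your verification is correct but not strictly needed.
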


\begin{proof}
(a) For this proof, a ``structure'' means a set equipped with a cyclic order and a $\Sigma$-coloring. Suppose that $X \to Y$ is an embedding of finite structures and we have an embedding $\alpha \colon X \to \bS$. We claim that $\alpha$ extends to an embedding $\beta \colon Y \to \bS$. By an inductive argument, it suffices to treat the case where $Y$ has one more element than $X$. Thus suppose $Y=X \sqcup \{y\}$. If $\# X \le 1$, the claim is clear, so suppose $\# X \ge 2$. Write $X=\{x_1, \ldots, x_n\}$, where $x_1<x_2<\cdots<x_n<x_1$, and let $i$ be such that $x_i<y<x_{i+1}$ (where $i+1$ is taken modulo $n$). Choose a point $z \in \bS$ between $\alpha(x_i)$ and $\alpha(x_{i+1})$ having the same color at $y$. Now simply define $\beta(x_j)=\alpha(x_j)$ for all $j$, and $\beta(y)=z$. Then $\beta$ is an embedding of $Y$ extending $\alpha$.

It now follows from a standard back-and-forth argument that $\bS$ is homogeneous. To be a bit more precise, the previous paragraph shows that $\bS$ is ``f-injective'' in the terminology of \cite[\S A.4]{homoten}. By \cite[Proposition~A.7]{homoten}, any f-injective object is homogeneous.

(b) Since $\bS$ is a countable homogeneous structure into which every finite structure embeds, it is the Fra\"iss\'e limit of the class of finite structures. (To see that every structure embeds, simply note that the empty structure does and so the general case follows from f-injectivity.) The Fra\"iss\'e limit is unique up to isomorphism, which yields the uniqueness statement.

(c) Let $\bS^{(n)}$ denote the set of $n$-element subsets of $\bS$. If $x$ and $y$ are two points in $\bS^{(n)}$ that are isomorphic (with their induced structures) then the homogeneity of $\bS$ shows that they belong to the same $G$-orbit. Since there are finitely many structures of cardinality $n$, it follows that $G$ has finitely many orbits on $\bS^{(n)}$. Since this holds for all $n$, it follows that $G$ is oligomorphic.
\end{proof}

\subsection{The line}

Fix a point $\infty \in \bS$, and let $\bL = \bS \setminus \{\infty\}$. Then $\bL$ carries a total order and a $\Sigma$-coloring. As a totally ordered set, $\bL$ is isomorphic to the set of rational numbers with its standard order. An argument similar to the above shows that $\bL$ is homogeneous, and the Fra\"iss\'e limit of the class of finite sets equipped with a total order and $\Sigma$-coloring. We let $H$ be the automorphism group of $\bL$, which is also oligomorphic. Note that $H$ is simply the stabilizer of $\infty$ in $G$, and for a finite subset $A$ of $\bL$, we have $H(A)=G(A \cup \{\infty\})$. (Here $H(A)$ denotes the subgroup of $H$ fixing each element of $A$.)

\subsection{Intervals}

Given $x \ne y$ in $\bS$, consider the set $I=(x,y)$ of all points $z \in \bS$ satisfying $x<z<y$. We refer to sets of the form $(x,y)$ as \defn{proper intervals}. We refer to $x$ and $y$ as the left and right endpoints of $I$. If $J=(x',y')$ is a second proper interval then $J=gI$ for some $g \in G$ if and only if $\sigma(x)=\sigma(x')$ and $\sigma(y)=\sigma(y')$. An \defn{improper interval} is one of the form $\bS \setminus \{x\}$ for $x \in \bS$.

Let $I$ be an interval (proper or improper). Then $I$ carries a total order and a $\Sigma$-coloring, and is easily seen to be the Fra\"iss\'e limit of the class of finite sets with a total order and $\Sigma$-coloring. Thus $I$ is abstractly isomorphic to $\bL$. We let $H_I$ be the automorphism group of $I$. The pair $(H_I, I)$ is isomorphic to $(H, \bL)$; in particular, $H_I$ is oligomorphic.

Suppose that $A$ is a finite non-empty subset of $\bS$, and write $\bS \setminus A = I_1 \sqcup \cdots \sqcup I_r$, where $I_1, \ldots, I_r$ are intervals. Whenever we write such a decomposition, we assume that the indexing of the intervals is compatible with the natural cyclic order on them, i.e., $I_i$ is between $I_{i-1}$ and $I_{i+1}$ (where the indices are taken modulo $n$). An element of $G(A)$ preserves each $I_i$, and so there is a natural map
\begin{displaymath}
G(A) \to H_{I_1} \times \cdots H_{I_r}.
\end{displaymath}
One readily sees that this map is an isomorphism.

\subsection{Open subgroups}

We now classify the open subgroups of $H$.

\begin{proposition} \label{prop:subgp}
Every open subgroup of $H$ has the form $H(A)$ for some finite subset $A \subset \bL$.
\end{proposition}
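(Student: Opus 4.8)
The plan is to prove the apparently stronger statement: if $U$ is \emph{any} subgroup of $H$ with $H(A) \subseteq U$ for some finite $A \subseteq \bL$, then $U = H(B)$, where $B$ denotes the set of points of $\bL$ fixed by every element of $U$. Since the subgroups $H(A)$ ($A$ finite) form a neighborhood basis of the identity, every open subgroup contains some such $H(A)$, so this yields the proposition. I would argue by induction on $\#A$. The case $\#A = 0$ is trivial, since then $H(\emptyset) = H \subseteq U \subseteq H$, so $U = H = H(\emptyset)$ (the fixed set of $H$ being $\emptyset$, as every point of $\bL$ is moved by some automorphism).

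Two elementary observations enter. First, the homogeneity of $\bL$ (established as in Proposition~\ref{prop:S-homo}) implies that $H(A)$ fixes no point outside $A$: given $x \notin A$, density of the color classes yields $y \neq x$ of the same color in the same interval of $\bL \setminus A$, and the partial isomorphism fixing $A$ and sending $x$ to $y$ extends to $g \in H(A)$. Hence $B \subseteq A$ is finite and $U \subseteq H(B)$. Second, for $u \in U$ we have $u H(A) u^{-1} = H(uA)$, so $H(uA) \subseteq U$ as well. The inductive step now rests on the following \emph{Lemma}: for finite $A, B \subseteq \bL$ one has $\langle H(A), H(B)\rangle = H(A \cap B)$. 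Granting this, suppose $U \neq H(A)$; then some $u \in U$ has $u\vert_A \neq \id$, and since an order-preserving bijection of a finite linear order is the identity, $uA \neq A$, so $A \cap uA \subsetneq A$. As $H(A)$ and $H(uA)$ both lie in $U$, the Lemma gives $H(A \cap uA) \subseteq U$, and the inductive hypothesis applied with the smaller set $A \cap uA$ gives $U = H(B)$.

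It remains to prove the Lemma, which is the main content. The inclusion ``$\subseteq$'' is clear. For ``$\supseteq$'', set $C = A \cap B$ and use the isomorphism $H(C) \cong \prod_i H_{I_i}$, where $I_1, \dots, I_r$ are the intervals of $\bL \setminus C$ and each pair $(H_{I_i}, I_i)$ is isomorphic to $(H, \bL)$ (the line analogue of the interval decomposition in \S\ref{s:group}). Since $A \setminus C$ and $B \setminus C$ have disjoint traces on each $I_i$, we get $H(A) = \prod_i H_{I_i}(A \cap I_i)$ and $H(B) = \prod_i H_{I_i}(B \cap I_i)$, and as automorphisms supported on distinct intervals commute, $\langle H(A), H(B)\rangle \supseteq \prod_i \langle H_{I_i}(A \cap I_i), H_{I_i}(B \cap I_i)\rangle$. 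This reduces us to the case $C = \emptyset$: \emph{if $A'$ and $B'$ are disjoint finite subsets of $\bL$, then $\langle H(A'), H(B')\rangle = H$}. This subgroup contains the open subgroup $H(A')$, hence is closed, so it is enough to show it is dense, i.e.\ has the same orbits on finite tuples from $\bL$ as $H$. The crucial input is that a one-point stabilizer $H(\{x\})$ is a \emph{maximal} open subgroup of $H$: if $W \supsetneq H(\{x\})$, choose $w \in W$ with $w(x) \neq x$; homogeneity of $\bL$ shows $H(\{x\})$ is transitive on the points of any fixed color on a given side of $x$, and combining this with $w$ and $w^{-1}$ forces the $W$-orbit of $x$ to be the whole color class of $x$, whence $W = H$. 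Because $A'$ and $B'$ are disjoint, every point of $A' \cup B'$ is moved by one of $H(A')$, $H(B')$, and one transports finite tuples past these finitely many points one at a time, concluding that $\langle H(A'), H(B')\rangle$ realizes every automorphism of $\bL$ on any prescribed finite set.

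I expect the Lemma to be the main obstacle — more precisely, the passage from maximality of point stabilizers to the generation statement $\langle H(A'), H(B')\rangle = H$ for disjoint finite $A', B'$. This requires a careful (though elementary) back-and-forth/transport argument over the finitely many ``crossings'' determined by $A' \cup B'$, showing that the generated group is transitive on finite tuples of each isomorphism type; the coloring does not cause trouble because each color class is dense. Everything else — the reduction to the disjoint one-interval case, and the induction on $\#A$ — is a formal consequence of the homogeneity of $\bL$ and the interval decomposition.
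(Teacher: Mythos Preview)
The paper does not actually prove this proposition: it cites \cite[Proposition~17.1]{repst} for the case $\#\Sigma=1$ and asserts that the general case follows by the same argument. So there is no detailed proof in the paper to compare against, and your proposal is considerably more explicit than what the paper provides.

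Your overall architecture is correct and is essentially the standard argument (and presumably close to what \cite{repst} does in the uncolored case). The induction on $\#A$ is set up correctly; the observation that $u\vert_A\neq\id$ forces $uA\neq A$ (because an order-preserving self-bijection of a finite chain is the identity) is exactly the right trick; and your reduction of the Lemma $\langle H(A),H(B)\rangle=H(A\cap B)$ to the disjoint case via the interval decomposition $H(C)\cong\prod_i H_{I_i}$ is clean and valid. Your proof that one-point stabilizers $H(\{x\})$ are maximal open subgroups is also correct (the step $w(x)>x\Rightarrow w^{-1}(x)<x$ uses only that $w$ is order-preserving).

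The one place to be careful is the final ``transport'' step, which you rightly flag. Two remarks. First, transitivity of $\langle H(A'),H(B')\rangle$ on single points of each color is not by itself enough; you need transitivity on $n$-tuples of every order/color type for all $n$, since that is what gives density. Second, the transport for tuples is most cleanly organized by moving the entire tuple to one side of $A'\cup B'$: list $A'\cup B'=\{p_1<\cdots<p_m\}$ and push the tuple rightward past each $p_j$ in turn, using $H(B')$ when $p_j\in A'$ and $H(A')$ when $p_j\in B'$ (the relevant interval of $\bL\setminus B'$, respectively $\bL\setminus A'$, always extends strictly past $p_j$ because $A'\cap B'=\emptyset$). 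Once both tuples sit entirely in the rightmost interval $(\max(A'\cup B'),\infty)$, a single element of $H(A')$ carries one to the other. An alternative, slightly slicker, route is to induct on $\#A'+\#B'$: remove the largest point $p=\max(A'\cup B')$, say $p\in A'$, use maximality of $H(A')$ in $H(A'\setminus\{p\})$ (which you have already established via the interval decomposition), and exhibit an element of $\langle H(A'),H(B')\rangle$ fixing $A'\setminus\{p\}$ but moving $p$ by working in the rightmost interval. Either way the argument goes through; the coloring plays no role beyond density.
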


\begin{proof}
This is proved for $\# \Sigma=1$ in \cite[Proposition~17.1]{repst}. The general case follows from a similar argument.
\end{proof}

\begin{corollary} \label{cor:subgp}
Let $A$ be a non-empty finite subset of $\bS$. Then every open subgroup of $G(A)$ has the form $G(B)$ for some finite subset $B$ of $\bS$ containing $A$.
\end{corollary}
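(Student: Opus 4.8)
The plan is to cut $\bS$ along $A$ and reduce to Proposition~\ref{prop:subgp} on the line. Since $A \neq \varnothing$, write $\bS \setminus A = I_1 \sqcup \cdots \sqcup I_r$ as a disjoint union of intervals, so that the interval isomorphism of \S\ref{s:group} gives $G(A) \xrightarrow{\ \sim\ } H_{I_1} \times \cdots \times H_{I_r}$, with each pair $(H_{I_j}, I_j)$ isomorphic to $(H,\bL)$. Under this isomorphism, any finite $B$ with $A \subseteq B \subseteq \bS$ satisfies $B \setminus A = \bigsqcup_j (B \cap I_j)$, and $G(B)$ corresponds to $\prod_j H_{I_j}(B\cap I_j)$. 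Hence it suffices to show that every open subgroup $U$ of $H_{I_1}\times\cdots\times H_{I_r}$ is a product $U_1 \times \cdots \times U_r$ with $U_j$ open in $H_{I_j}$: then Proposition~\ref{prop:subgp}, transported along $(H_{I_j},I_j)\cong(H,\bL)$, gives $U_j = H_{I_j}(A_j)$ for some finite $A_j \subseteq I_j$, and $B := A \cup A_1 \cup \cdots \cup A_r$ works.

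Next I would isolate the feature of $H$ that makes this possible: \emph{every open subgroup of $H$ is self-normalizing}. Indeed, by Proposition~\ref{prop:subgp} such a subgroup is $H(A')$ for a finite $A' \subseteq \bL$; conjugation gives $gH(A')g^{-1} = H(gA')$, and since any point of $\bL$ outside a given finite set can be moved by an element of $H$ fixing that set (homogeneity of $\bL$ and of its intervals), $H(gA') = H(A')$ forces $gA' = A'$; but an order-preserving permutation of the finite totally ordered set $A'$ is the identity, so $g \in H(A')$. In particular, if $U_0 \trianglelefteq V_0$ are open subgroups of $H$, then $V_0 \subseteq N_H(U_0) = U_0$, so $U_0 = V_0$.

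The product statement then follows by induction on $r$ from a two-factor lemma: if $\Gamma = \Gamma_1 \times \Gamma_2$ and $\Gamma_1$ has the property that an open subgroup normal in an open overgroup must equal it, then every open subgroup $U$ of $\Gamma$ equals $(U\cap\Gamma_1)\times(U\cap\Gamma_2)$. To see this, put $U_i = U \cap \Gamma_i$ (open in $\Gamma_i$), note $U_1 \trianglelefteq U$ since $\Gamma_1 \trianglelefteq \Gamma$, and push through the projection $p_1$: the image $U_1 = p_1(U_1)$ is normal in the open subgroup $p_1(U)\subseteq\Gamma_1$, so by hypothesis $p_1(U) = U_1$; this forces $U \subseteq U_1 \times U_2$, the reverse inclusion being clear. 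Applying this with $\Gamma_1 = H_{I_1}$ and $\Gamma_2 = H_{I_2}\times\cdots\times H_{I_r}$, and invoking the self-normalizing property of the previous paragraph, peels off one factor at a time; one then finishes as in the first paragraph.

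The crux is the product step: a priori an open subgroup of $H_{I_1}\times\cdots\times H_{I_r}$ could be a diagonal-type subgroup not of product form, and it is precisely the self-normalizing property of open subgroups of $H$ that rules this out. Everything else---the interval decomposition, the dictionary between $G(B)$ and $\prod_j H_{I_j}(B\cap I_j)$, and the final bookkeeping---is routine given \S\ref{s:group} and Proposition~\ref{prop:subgp}.
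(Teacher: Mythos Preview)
Your argument is correct, but it is considerably more work than needed. The paper's proof is a single observation: since the point $\infty$ defining $H$ was an arbitrary element of $\bS$, and $A$ is non-empty, one may simply choose $\infty \in A$. Then $G(A) \subseteq G(\{\infty\}) = H$, so any open subgroup $U$ of $G(A)$ is already an open subgroup of $H$, and Proposition~\ref{prop:subgp} gives $U = H(A') = G(A' \cup \{\infty\})$ directly; the inclusion $A \subseteq A' \cup \{\infty\}$ follows because no point outside $A' \cup \{\infty\}$ is fixed by all of $G(A' \cup \{\infty\})$.

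Your route---decomposing $G(A)$ as the product $\prod_j H_{I_j}$, proving that open subgroups of $H$ are self-normalizing, and then using this to show that every open subgroup of the product splits as a product---is a genuinely different argument. It proves more along the way (the self-normalizing property and the Goursat-type splitting lemma are of independent interest, and the product form of open subgroups of $G(A)$ is exactly what gets used later in Proposition~\ref{prop:Ghat-sets}), and it would survive in a setting where the base point cannot be freely relocated. But here the paper's trick of moving $\infty$ into $A$ short-circuits all of that machinery: one invocation of Proposition~\ref{prop:subgp} suffices, with no product decomposition or normalizer computation required.
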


\begin{proof}
Since the choice of $\infty$ is arbitrary, we may as well assume it belongs to $A$. Thus $G(A) \subset H$, and the result now follows from the proposition.
\end{proof}

\begin{remark}
The $G(A)$'s do not account for all open subgroups of $G$. Let $G[A]$ denote the subgroup of $G$ that maps $A$ to itself (as a set). This can be larger than $G(A)$; for instance, if every point in $A$ has the same color, then the points of $A$ can be cyclically permuted, and $G[A] = \bZ/n \ltimes G(A)$ where $n=\# A$. One can show that $G[A]$ is the normalizer of $G(A)$, and that $G[A]/G(A)$ is a finite cyclic group. One can furthermore show that every open subgroup of $G$ sits between $G(A)$ and $G[A]$ for some $A$. We will not need this result, however.
\end{remark}

\subsection{Actions}

Let $\Sigma^{\star}$ denote the set of all words in the alphabet $\Sigma$. Given a word $w=w_1 \cdots w_n$ in $\Sigma^{\star}$ and an interval $I$, we let $I^w$ denote the subset of $I^n$ consisting of those tuples $(x_1, \ldots, x_n)$ such that $x_1<\cdots<x_n$ and $\sigma(x_i)=w_i$. The group $H_I$ clearly acts on $I^w$, and this action is transitive by the homogeneity of $I$.

\begin{proposition} \label{prop:Ghat-sets}
Let $A$ be a non-empty finite subset of $\bS$, and write $\bS \setminus A = I_1 \sqcup \cdots \sqcup I_r$. Then every transitive $G(A)$-set is isomorphic to $I_1^{w_1} \times \cdots \times I_r^{w_r}$ for some $w_1, \ldots, w_r \in \Sigma^{\star}$.
\end{proposition}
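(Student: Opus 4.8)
The plan is to combine the classification of open subgroups of $G(A)$ in Corollary~\ref{cor:subgp} with the product decomposition $G(A) \cong H_{I_1} \times \cdots \times H_{I_r}$ recorded above.

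First, any transitive $G(A)$-set has the form $G(A)/U$ for an open subgroup $U \subseteq G(A)$. Since $A$ is non-empty, Corollary~\ref{cor:subgp} gives $U = G(B)$ for some finite $B \subseteq \bS$ with $A \subseteq B$. Write $B_i = B \cap I_i$, so that $B = A \sqcup B_1 \sqcup \cdots \sqcup B_r$ and each $B_i$ is a finite subset of $I_i$. I would then check that under the isomorphism $G(A) \xrightarrow{\sim} H_{I_1} \times \cdots \times H_{I_r}$, $g \mapsto (g|_{I_1}, \ldots, g|_{I_r})$, the subgroup $G(B)$ corresponds to $H_{I_1}(B_1) \times \cdots \times H_{I_r}(B_r)$: an element of $G(A)$ already fixes $A$ pointwise, so it lies in $G(B)$ if and only if each restriction $g|_{I_i}$ fixes $B_i$ pointwise. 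Therefore
\[
G(A)/G(B) \;\cong\; \prod_{i=1}^{r} H_{I_i}/H_{I_i}(B_i)
\]
as $G(A)$-sets.

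Second, for each $i$ I would identify $H_{I_i}/H_{I_i}(B_i)$ with an $I_i^{w_i}$. List $B_i = \{b_1 < \cdots < b_m\}$ in increasing order and set $w_i = \sigma(b_1) \cdots \sigma(b_m) \in \Sigma^{\star}$ (taking $w_i$ to be the empty word when $B_i = \emptyset$). The tuple $(b_1, \ldots, b_m)$ lies in $I_i^{w_i}$; its stabilizer in $H_{I_i}$ is precisely the subgroup fixing $b_1, \ldots, b_m$, i.e.\ $H_{I_i}(B_i)$; and $H_{I_i}$ acts transitively on $I_i^{w_i}$ by the homogeneity of $I_i$ (which is isomorphic to $\bL$), as already noted in the discussion of actions. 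Orbit--stabilizer then gives an isomorphism of $H_{I_i}$-sets $H_{I_i}/H_{I_i}(B_i) \cong I_i^{w_i}$. Substituting into the displayed isomorphism yields $G(A)/G(B) \cong I_1^{w_1} \times \cdots \times I_r^{w_r}$, which proves the proposition.

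The argument is essentially bookkeeping once Corollary~\ref{cor:subgp} is available; the only points needing care are the matching of $G(B)$ with $\prod_i H_{I_i}(B_i)$ under the product decomposition, and the identification of the stabilizer of $(b_1, \ldots, b_m)$ with $H_{I_i}(B_i)$, both of which are straightforward. I do not expect a genuine obstacle here.
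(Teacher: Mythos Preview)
Your argument is correct and follows exactly the paper's approach: reduce to $G(A)/G(B)$ via Corollary~\ref{cor:subgp}, use the product decomposition $G(A)\cong\prod_i H_{I_i}$ to identify $G(B)$ with $\prod_i H_{I_i}(B_i)$, and then apply orbit--stabilizer with the homogeneity of each $I_i$ to get $H_{I_i}/H_{I_i}(B_i)\cong I_i^{w_i}$. The only difference is that you spell out a couple of verifications the paper leaves implicit.
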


\begin{proof}
Let $X$ be a transitive $G(A)$-set. Then $X$ is isomorphic to $G(A)/U$ for some open subgroup $U$ of $G(A)$. By Corollary~\ref{cor:subgp}, we have $U=G(B)$ for some finite subset $B$ of $\bS$ containing $A$. Let $B_i = B \cap I_i$. Writing $B_i=\{x_{i,1}<\cdots<x_{i,n(i)}\}$, let $w_i=\sigma(x_{i,1}) \cdots \sigma(x_{i,n(i)})$. Under the isomorphism $G(A) = H_{I_1} \times \cdots \times H_{I_r}$, we have $G(B) = H_{I_1}(B_1) \times \cdots \times H_{I_r}(B_r)$. As $H_{I_i}$ acts transitively on $I_i^{w_i}$ with stabilizer $H_{I_i}(B_i)$, it follows that $H_{I_i}/H_{I_i}(B_i) \cong I_i^{w_i}$. Thus $X \cong I_1^{w_1} \times \cdots \times I_r^{w_r}$, as required.
\end{proof}

\begin{proposition} \label{prop:minimal-class}
Let $A$ be a non-empty finite subset of $\bS$, and write $\bS \setminus A = I_1 \sqcup \cdots \sqcup I_r$.
\begin{enumerate}
\item Let $w,w',w_2, \ldots, w_r \in \Sigma^{\star}$ and let $c \in \Sigma$. Let $\pi_1 \colon I_1^{wcw'} \to I_1^{ww'}$ be the map that projects away from the $c$ coordinate, and let
\begin{displaymath}
\pi \colon I_1^{wcw'} \times I_2^{w_2} \times \cdots \times I_r^{w_r} \to I_1^{ww'} \times I_2^{w_2} \times \cdots \times I_r^{w_r}
\end{displaymath}
be the map that is $\pi_1$ on the first factor and the identity on other factors. Then $\pi$ is a minimal map of transitive $G(A)$-sets.
\item The fiber of $\pi$ over any point is isomorphic to $J^c$, where $J$ is a subinterval of $I_1$ defined as follows. If $w$ is non-empty, let $y$ be a point in $I_1$ whose color is the final letter of $w$; otherwise, let $y$ be the left endpoint of $I_1$. If $w'$ is non-empty, let $y<y'$ be a point in $I_1$ whose color is the first letter of $w'$; otherwise, let $y'$ be the right endpoint of $I_1$. Then $J=(y,y')$.
\item Any minimal map of transitive $G(A)$-sets is isomorphic to one as in (a), after possibly re-indexed the intervals.
\end{enumerate}
\end{proposition}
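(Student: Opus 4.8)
The plan is to prove the three parts in order, with part (c) — the classification statement — being the main obstacle, since (a) and (b) are essentially explicit computations with the concrete description of $G(A)$-sets furnished by Proposition~\ref{prop:Ghat-sets}.

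For part (a), I would first identify the fiber concretely. Under the identification $G(A) = H_{I_1} \times \cdots \times H_{I_r}$, the map $\pi$ is a product of the identity on all factors but the first, so it suffices to check that $\pi_1 \colon I_1^{wcw'} \to I_1^{ww'}$ is a minimal map of transitive $H_{I_1}$-sets; minimality of a product of an identity with a minimal map is immediate from the product decomposition of any intermediate factorization. Transitivity of both source and target follows from homogeneity of $I_1$ (as noted before Proposition~\ref{prop:Ghat-sets}). For minimality of $\pi_1$: a factorization $I_1^{wcw'} \to Z \to I_1^{ww'}$ corresponds, via Proposition~\ref{prop:Ghat-sets} applied inside $I_1$, to a chain of stabilizers $H_{I_1}(B) \subset H_{I_1}(C) \subset H_{I_1}(D)$ where $B$ realizes the word $wcw'$, $D$ realizes $ww'$, and $D \subset B$ with $B \setminus D$ a single point (the point carrying color $c$). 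By Proposition~\ref{prop:subgp} every open subgroup between these is of the form $H_{I_1}(C)$ with $D \subseteq C \subseteq B$, and since $B \setminus D$ is a single point, either $C = D$ or $C = B$; hence $\pi_1$ does not factor non-trivially.

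For part (b), I would compute $\pi_1^{-1}(\text{point})$. Pick a point of $I_1^{ww'}$, i.e.\ an increasing tuple realizing $ww'$; the fiber consists of all ways to insert one more point of color $c$ strictly between the last coordinate coming from $w$ and the first coming from $w'$ (or, at the ends, between the relevant endpoint of $I_1$ and the adjacent coordinate). With $y, y'$ as defined in the statement, this is exactly the set $(y,y')^c = J^c$, and the stabilizer $V$ of the chosen base point in $G(A)$ acts on it through its factor $H_J$ (the automorphism group of the open interval $J$ cut out by the fixed points), so the fiber is the transitive $V$-set $J^c$ as claimed.

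For part (c), the main work: let $\pi \colon X \to Y$ be an arbitrary minimal map of transitive $G(A)$-sets. By Proposition~\ref{prop:Ghat-sets} write $X \cong \prod_i I_i^{u_i}$ and $Y \cong \prod_i I_i^{v_i}$, with $\pi$ arising from an inclusion of stabilizers $G(C) \subset G(B)$, $B \subseteq C$ finite subsets of $\bS$ containing $A$. Factoring coordinatewise, $\pi$ is the product of maps $I_i^{u_i} \to I_i^{v_i}$ induced by $H_{I_i}(C_i) \subseteq H_{I_i}(B_i)$ where $C_i = C \cap I_i$, $B_i = B \cap I_i$, $B_i \subseteq C_i$. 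A product of maps of transitive sets is minimal only if all but one factor is an isomorphism and the remaining one is minimal (any non-trivial factorization of a non-isomorphism factor, or a further splitting of two non-isomorphism factors, yields a non-trivial factorization of the product). So after re-indexing we may assume $B_i = C_i$ for $i \ge 2$ and that $I_1^{u_1} \to I_1^{v_1}$ is minimal with $B_1 \subsetneq C_1$. It remains to show a minimal map of this shape has $C_1 = B_1 \sqcup \{\text{one point}\}$: if $\#(C_1 \setminus B_1) \ge 2$, pick a point $p \in C_1 \setminus B_1$ and interpolate $H_{I_1}(C_1) \subsetneq H_{I_1}(B_1 \cup \{p\}) \subsetneq H_{I_1}(B_1)$, both inclusions proper by homogeneity of $I_1$ (distinct finite subsets have distinct pointwise stabilizers), contradicting minimality. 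Thus $C_1 = B_1 \sqcup \{p\}$ with $p$ of some color $c$; ordering $B_1$ and letting $w$ (resp.\ $w'$) be the colors of the points of $B_1$ below (resp.\ above) $p$, we get $v_1 = ww'$ and $u_1 = wcw'$, so $\pi$ is isomorphic to the map described in (a), as desired.
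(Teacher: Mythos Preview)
Your proposal is correct and follows essentially the same approach as the paper: both arguments reduce everything to the classification of open subgroups (Proposition~\ref{prop:subgp} / Corollary~\ref{cor:subgp}), which forces any chain of stabilizers between $G(C)$ and $G(B)$ to come from a chain $B \subseteq E \subseteq C$ of finite subsets, so that minimal maps correspond exactly to adding a single point. The paper handles (a) and (c) in one stroke by working directly with the inclusion $G(C) \subset G(B)$ inside $G(A)$, whereas you first isolate the $H_{I_1}$-factor and then reassemble; the content is the same. One small remark: your sentence ``minimality of a product of an identity with a minimal map is immediate from the product decomposition of any intermediate factorization'' is doing real work---it relies on the fact (via Corollary~\ref{cor:subgp}) that every intermediate transitive $G(A)$-set again has a stabilizer of product form $\prod_i H_{I_i}(E_i)$, so the intermediate map really does split coordinatewise; you invoke exactly this ingredient a few lines later, so the argument is complete, but it would read more cleanly to cite Corollary~\ref{cor:subgp} at that point rather than call it ``immediate.''
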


\begin{proof}
Let $A \subset B \subset C$ be finite subsets of $\bS$ with $\# C=\# B + 1$. Then $G(B)$ is a minimal subgroup over $G(C)$, and so the natural map $G(A)/G(C) \to G(A)/G(B)$ is a minimal map of transitive $G(A)$-sets. Looking at the identifications in the proof of Proposition~\ref{prop:Ghat-sets}, we see that this map has the form stated in (a). Every minimal map has this form by the classification of open subgroups of $G(A)$.

We now explain statement (b). First note that the fiber of $\pi$ is isomorphic to the fiber of $\pi_1$, so we just consider this. Suppose $\ell(w)=n$ and $\ell(w')=m$. Let $p=(x_1, \ldots, x_n, z_1, \ldots, z_m)$ be a point in $I_1^{ww'}$. Let $J$ be the interval $(x_n, z_1)$, where $x_n$ is taken to be the left endpoint of $I_1$ if $w$ is empty, and $z_1$ is taken to be the right endpoint of $I_1$ if $w'$ is empty. Then $\pi_1^{-1}(p)=J^c$, and so (b) follows.
\end{proof}

\section{Classification of measures} \label{s:class}

\subsection{Combinatorial reformulation of measures} \label{ss:Sigma-meas}

In \S \ref{s:class}, we prove Theorem~\ref{mainthm} following the plan in \S \ref{ss:plan}. As a first step, we introduce $\Sigma$-measures and connect them to measures. For a word $w=w_1 \cdots w_n$ in $\Sigma^{\star}$, we let $w[i,j]$ denote the subword $w_i \cdots w_j$. We use parentheses to omit endpoints, e.g., $w[i,j)=w_i \cdots w_{j-1}$.

\begin{definition} \label{defn:Sigma-meas}
A \defn{$\Sigma$-measure} with values in $k$ is a rule $\nu$ assigning to each $a,b \in \Sigma$ and $w \in \Sigma^{\star}$ a quantity $\nu_{a,b}(w)$ in $k$ such that the following axioms hold:
\begin{enumerate}
\item $\nu_{a,b}(w)=1$ if $w$ is the empty word.
\item Let $w$ and $w'$ be words, put $r=\ell(w)$, and let $a,b,c \in \Sigma$. Then
\begin{displaymath}
\nu_{a,b}(w c w')=\nu_{a,b}(ww') \nu_{w_r,w'_1}(c).
\end{displaymath}
Here, we use the convention that $w_r=a$ if $w$ is empty, and $w'_1=b$ if $w'$ is empty.
\item Let $w$ of length $n$, and let $a,b,c \in \Sigma$. Then
\begin{displaymath}
\nu_{a,b}(w) = \sum_{i=0}^n \nu_{a,c}(w[1,i]) \nu_{c,b}(w(i,n]) + \sum_{w_i=c} \nu_{a,c}(w[1,i)) \nu_{c,b}(w(i,n]).
\end{displaymath}
\end{enumerate}
We let $\cM_{\Sigma}(k)$ denote the set of such measures.
\end{definition}

\begin{proposition} \label{prop:Sigma-meas}
We have a natural bijection $\cM_G(k) \cong \cM_{\Sigma}(k)$. Under this bijection, a measure $\mu$ for $G$ corresponds to a $\Sigma$-measure $\nu$ if and only if $\mu(I^w)=\nu_{a,b}(w)$ whenever $I$ is a proper interval with endpoints of color $a$ and $b$, and $w \in \Sigma^{\star}$.
\end{proposition}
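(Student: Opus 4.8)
The plan is to run everything through the $\sE$-measure formalism of \S\ref{ss:E-meas}. I would take $\sE$ to be the collection of subgroups $G(A)$ with $A\subseteq\bS$ finite and $\#A\ge 2$. This is stable under conjugation ($G(A)^g=G(gA)$), every open subgroup of $G$ contains a member (it contains some $G(A)$, and we may enlarge $A$), and it is downwards closed by Corollary~\ref{cor:subgp}. Hence Proposition~\ref{prop:meas-E-meas} identifies $\cM_G(k)$ with $\cM^\sE_G(k)$, and Proposition~\ref{prop:minimal} lets us replace axiom (d) of Definition~\ref{defn:E-meas} by its minimal-map version (d'). The point of insisting $\#A\ge 2$ is that then every interval of $\bS\setminus A$ is proper, so by Proposition~\ref{prop:Ghat-sets} every transitive $G(A)$-set is a product $I_1^{w_1}\times\cdots\times I_r^{w_r}$ of powers of \emph{proper} intervals.

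First I would show an $\sE$-measure $\mu_\bullet$ is the same data as a rule $\nu$ as in Definition~\ref{defn:Sigma-meas}. Each $\mu_U$ is a ring homomorphism (remark after Proposition~\ref{prop:meas-E-meas}), so $\mu_{G(A)}$ is determined by the numbers $\lambda_A(I_j,w_j):=\mu_{G(A)}(I_j^{w_j}\times\text{pts})$. Using Definition~\ref{defn:E-meas}(b) in the harmless direction --- enlarging $A$ by points lying \emph{outside} $\bar I_j$, which does not break $I_j^{w_j}$ into orbits --- together with conjugation-invariance, $\lambda_A(I_j,w_j)$ depends only on the colors $a,b$ of the endpoints of $I_j$ and on $w_j$; write it $\nu_{a,b}(w_j)$. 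Conversely any such $\nu$ gives an additive map $\mu_{G(A)}\colon\Omega(G(A))\to k$ by $\mu_{G(A)}(\prod_j I_j^{w_j})=\prod_j\nu_{a_j,b_j}(w_j)$, well defined since the tuple $(w_j)$ is an isomorphism invariant of $\prod_j I_j^{w_j}$ by homogeneity of the intervals.

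It then remains to match axioms, the one combinatorial input being: if $z\in I=(x,y)$ has color $c$, $J_1=(x,z)$, $J_2=(z,y)$, then as a $G(A\cup\{z\})$-set (for any $A$ with $I$ an interval of $\bS\setminus A$)
\begin{displaymath}
I^w\;\cong\;\bigsqcup_{i=0}^{n}\bigl(J_1^{w[1,i]}\times J_2^{w(i,n]}\bigr)\;\sqcup\;\bigsqcup_{i:\,w_i=c}\bigl(J_1^{w[1,i)}\times J_2^{w(i,n]}\bigr),
\end{displaymath}
the index $i$ recording the position of $z$ among the $n$ chosen points. Granting this, additivity of $\mu$ turns Definition~\ref{defn:E-meas}(b) (restriction along $G(A\cup\{z\})\subset G(A)$, one point at a time) into relation (c) of Definition~\ref{defn:Sigma-meas}; Definition~\ref{defn:E-meas}(a) with $\nu_{a,b}(\varnothing)=\mu(\text{pt})$ is relation (a); Definition~\ref{defn:E-meas}(c) adds nothing once $\nu$ is known to depend only on endpoint colors; and (d') becomes relation (b): by Proposition~\ref{prop:minimal-class}(c) every minimal map of transitive $G(A)$-sets is, up to reindexing, the projection $I_1^{wcw'}\to I_1^{ww'}$ (times identities), whose fiber over a point is $J^c$ for the subinterval $J$ of Proposition~\ref{prop:minimal-class}(b), with endpoint colors $w_r$ (or $a_1$ if $w$ is empty) and $w'_1$ (or $b_1$ if $w'$ is empty); so multiplicativity in this fibration reads $\nu_{a_1,b_1}(wcw')=\nu_{a_1,b_1}(ww')\,\nu_{w_r,w'_1}(c)$. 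Unwinding the identifications, a measure $\mu$ and its $\Sigma$-measure $\nu$ satisfy $\mu(I^w)=\nu_{a,b}(w)$ for proper $I$ with endpoint colors $a,b$, since such $I^w$ is the transitive $G(\{x,y\})$-set $I^w\times(\text{other interval})^{\varnothing}$.

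I expect the main obstacle to be the bookkeeping around axiom (b): one must (i) verify the displayed decomposition of $I^w$ carefully, keeping the $n+1$ ``gap'' terms separate from the terms where $z$ coincides with a chosen point, and (ii) confirm that $\lambda_A(I,w)$ really is independent of the ambient set $A$ --- i.e. that enlarging $A$ by points outside $\bar I$ leaves $I^w$ transitive --- so that the passage between $\mu_\bullet$ and $\nu$ is well defined in both directions. Everything else is a direct translation through Propositions~\ref{prop:meas-E-meas}, \ref{prop:minimal}, \ref{prop:Ghat-sets}, and \ref{prop:minimal-class}.
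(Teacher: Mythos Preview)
Your proposal is correct and follows essentially the same route as the paper: the paper also takes $\sE=\{G(A):\#A\ge 2\}$, proves the decomposition of $I^w$ you describe as Lemma~\ref{lem:Sigma-meas-1}, and matches the $\sE$-measure axioms (a), (b), (c), (d') against the $\Sigma$-measure axioms (a), (c), (trivial), (b) exactly as you outline, using Propositions~\ref{prop:meas-E-meas}, \ref{prop:minimal}, \ref{prop:Ghat-sets}, and \ref{prop:minimal-class}. The only cosmetic difference is that the paper packages the argument as constructing an explicit ring isomorphism $\Theta(G)\to R$ (with $R$ the universal ring presented by the $\Sigma$-measure relations) and its inverse, rather than arguing the bijection $\cM_G(k)\cong\cM_\Sigma(k)$ directly for each $k$.
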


The proof of the proposition will take the remainder of \S \ref{ss:Sigma-meas}. Define $R$ to be the commutative ring generated by symbols $x_{a,b}(w)$, where $a,b \in \Sigma$ and $w \in \Sigma^{\star}$, modulo the following relations:
\begin{enumerate}
\item $x_{a,b}(w)=1$ if $w$ is empty.
\item Let $w$ and $w'$ be words, put $r=\ell(w)$, and let $a,b,c \in \Sigma$. Then
\begin{displaymath}
x_{a,b}(w c w')=x_{a,b}(ww') x_{w_r,w'_1}(c),
\end{displaymath}
where we use conventions as in Definition~\ref{defn:Sigma-meas}.
\item Let $w$ of length $n$, let $a,b,c \in \Sigma$. Then
\begin{displaymath}
x_{a,b}(w) = \sum_{i=0}^n x_{a,c}(w[1,i]) x_{c,b}(w(i,n]) + \sum_{w_i=c} x_{a,c}(w[1,i)) x_{c,b}(w(i,n]).
\end{displaymath}
\end{enumerate}
Thus a $\Sigma$-measure is a homomorphism $R \to k$. With this language, we can reformulate Proposition~\ref{prop:Sigma-meas} as follows:

\begin{proposition} \label{prop:Sigma-meas-2}
There exists a ring isomorphism $\phi \colon \Theta(G) \to R$ satisfying $\phi([I^w])=x_{a,b}(w)$, where $I$ is an arbitrary proper interval, and $a$ and $b$ are the colors of the left and right endpoints of $I$.
\end{proposition}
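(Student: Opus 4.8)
The plan is to construct the ring map $\phi \colon \Theta(G) \to R$ and its inverse directly from the explicit description of transitive $\hat G$-sets in Proposition~\ref{prop:Ghat-sets} and of minimal maps in Proposition~\ref{prop:minimal-class}, using Proposition~\ref{prop:minimal} to reduce the verification of the $\sE$-measure axioms to minimal maps. The key point is that, by Proposition~\ref{prop:meas-E-meas} applied with $\sE$ the collection of all subgroups of the form $G(A)$ with $A$ a non-empty finite subset of $\bS$ (this is downwards closed by Corollary~\ref{cor:subgp} and cofinal in the open subgroups of $G$ after moving $\infty$), a measure for $G$ is the same as an $\sE$-measure, and such a thing is determined by its values on the generators $[I^w]$ of the Burnside rings $\Omega(G(A))$.

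\textbf{Construction of $\phi$.} Recall $\Theta(G)$ is the polynomial ring on symbols $[X]$, $X$ ranging over isomorphism classes of finitary $\hat G$-sets, modulo the relations from Definition~\ref{defn:measure}(b,c,d,e). I would first observe, using Proposition~\ref{prop:Ghat-sets}, that every transitive $\hat G$-set is a product $I_1^{w_1} \times \cdots \times I_r^{w_r}$, and that by multiplicativity (relation (e) with $Y$ a point, or rather the consequence noted after Proposition~\ref{prop:meas-E-meas}) the class $[I_1^{w_1}\times\cdots\times I_r^{w_r}]$ equals $\prod_i [I_i^{w_i}]$ in $\Theta(G)$. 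Moreover $[I^w]$ depends only on the colors $a,b$ of the endpoints of the proper interval $I$ and on $w$: two such intervals with matching endpoint colors have $G$-conjugate pairs (interval, $\hat G$-set), so conjugation invariance (relation (c)) identifies the classes. Hence $\Theta(G)$ is generated as a ring by the elements $x_{a,b}(w) := [I^w]$. Now I define $\phi \colon \Theta(G) \to R$ on generators by $\phi([I^w]) = x_{a,b}(w)$ and $\phi(\bone) = 1$; to see this is well-defined I must check that the relations (b,c,d,e) of Definition~\ref{defn:measure} map to relations holding in $R$. Relations (b) and (c) are immediate (normalization $\mapsto$ relation~(a) of $R$; conjugation invariance is already absorbed). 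For relation (d), additivity, the content is that the decomposition of a $G(A)$-set into orbits, combined with the product formula, is matched by an identity in $R$: deleting a point from an interval $I$ with endpoint colors $a,b$ and a point of color $c$ splits $I^w$ according to which of the new sub-intervals each coordinate lands in, and summing over where the $c$-colored point sits relative to the coordinates of a tuple in $I^w$ yields exactly relation~(c) of $R$. For relation (e), multiplicativity in fibrations, Proposition~\ref{prop:minimal} lets me restrict to minimal maps; by Proposition~\ref{prop:minimal-class}(a) every minimal map of transitive $G(A)$-sets is, up to re-indexing, $\pi \colon I_1^{wcw'}\times(\text{rest}) \to I_1^{ww'}\times(\text{rest})$, with fiber $J^c$ for the sub-interval $J=(y,y')$ of Proposition~\ref{prop:minimal-class}(b). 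The endpoints of $J$ have colors $w_r$ (the last letter of $w$, or $a$ if $w$ is empty) and $w'_1$ (or $b$ if $w'$ is empty), so the fiber class maps to $x_{w_r,w'_1}(c)$, and the assertion $[I_1^{wcw'}] = [I_1^{ww'}]\cdot x_{w_r,w'_1}(c)$ is precisely relation~(b) of $R$. Thus $\phi$ is a well-defined ring homomorphism.

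\textbf{Construction of the inverse.} Conversely I define $\psi \colon R \to \Theta(G)$ on generators by $\psi(x_{a,b}(w)) = [I^w]$ for a proper interval $I$ with endpoint colors $a,b$ (well-defined up to conjugacy as above). To check $\psi$ respects the defining relations (a),(b),(c) of $R$: relation~(a) is the normalization axiom; relation~(b) is the minimal-map case of multiplicativity just discussed, read in reverse; relation~(c) is the orbit decomposition of $I^w$ after deleting a point of color $c$, read in reverse. Since $\phi$ and $\psi$ are inverse on generators and both are ring maps, they are mutually inverse, proving the isomorphism. Finally, Proposition~\ref{prop:Sigma-meas} follows: a $k$-valued measure $\mu$ is a ring map $\Theta(G)\to k$, which transports along $\phi$ to a ring map $R\to k$, i.e.\ a $\Sigma$-measure $\nu$, with the stated matching $\mu(I^w)=\nu_{a,b}(w)$.

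\textbf{Main obstacle.} The genuinely non-formal steps are (i) verifying that additivity in $\Theta(G)$ corresponds exactly to relation~(c) of $R$ — this requires carefully writing out, for a proper interval $I$ with endpoint colors $a,b$ and a deleted point of color $c$, the decomposition of $I^w$ into the $2n+1$ pieces indexed by position of the new point relative to the $n$ coordinates (either strictly between two consecutive coordinates, giving the first sum, or equal to a coordinate $w_i=c$, giving the second sum), and matching each piece's class with $[I'^{w[1,i]}]\cdot[I''^{w(i,n]}]$ via Proposition~\ref{prop:Ghat-sets}; and (ii) pinning down the endpoint colors of the fiber interval $J$ in the minimal-map computation so that relation~(b) of $R$ comes out with the correct subscripts $w_r, w'_1$ and the stated empty-word conventions. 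Both are bookkeeping rather than deep, but the empty-word edge cases (where $w$ or $w'$ is empty, so an endpoint of $J$ is an endpoint of $I_1$ rather than one of the chosen coordinates) are exactly where the conventions in Definition~\ref{defn:Sigma-meas}(b) and relation~(b) of $R$ are forced, and must be tracked consistently.
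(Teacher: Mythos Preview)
Your approach is essentially the same as the paper's: construct an $R$-valued $\sE$-measure using the explicit description of transitive $G(A)$-sets (Proposition~\ref{prop:Ghat-sets}) and minimal maps (Proposition~\ref{prop:minimal-class}), obtain $\phi$ from the universal property, and build the inverse $\psi$ by checking the three defining relations of $R$ hold in $\Theta(G)$. The two key computations you identify---the orbit decomposition of $I^w$ after inserting a point of color $c$ (the paper isolates this as a separate lemma) and the fiber of a minimal map---are exactly the ones the paper uses, and your discussion of the endpoint-color bookkeeping in the empty-word edge cases is on target.

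Two small corrections. First, take $\sE$ to consist of $G(A)$ with $\#A \ge 2$, not merely $A$ nonempty: when $\#A=1$ the complement $\bS\setminus A$ is an \emph{improper} interval with only one endpoint, so the assignment $[I^w]\mapsto x_{a,b}(w)$ is not available there. Second, your labeling is off in one place: the decomposition of $I^w$ upon deleting a point of color $c$ is not what verifies ``additivity'' (axiom~(d) of Definition~\ref{defn:measure}, which is automatic once $\phi_{G(A)}$ is defined additively on $\Omega(G(A))$); rather, it is what verifies the restriction-compatibility axiom~(b) of Definition~\ref{defn:E-meas}, i.e.\ that $\phi_{G(A)}(X)=\phi_{G(B)}(X)$ when $B\supset A$. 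This is the step where relation~(c) of $R$ enters, and it is the only genuinely nontrivial point in showing $\phi_\bullet$ is an $\sE$-measure besides the minimal-map check.
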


We note that the classes $[I^w]$ generate $\Theta(G)$ by Proposition~\ref{prop:Ghat-sets}, so there is at most one ring isomorphism as in the proposition.

Let $\sE$ be the set of subgroups of $G$ of the form $G(A)$ where $\# A \ge 2$. This satisfies the conditions of \S \ref{ss:E-meas}. We construct an $R$-valued $\sE$-measure $\phi_{\bullet}$ for $G$. Let $A$ be a finite subset of $\bS$ of cardinality at least~2, and write $\bS \setminus A = I_1 \sqcup \cdots \sqcup I_r$. Recall that every transitive $G(A)$ set is isomorphic to one of the form $I_1^{w_1} \times \cdots \times I_r^{w_r}$ with $w_1, \ldots, w_r \in \Sigma^{\star}$ (Proposition~\ref{prop:Ghat-sets}). We define
\begin{displaymath}
\phi_{G(A)} \colon \Omega(G(A)) \to R
\end{displaymath}
to be the unique additive map satisyfing
\begin{displaymath}
\phi(I_1^{w_1} \times \cdots \times I_r^{w_r}) = x_{a_1,b_1}(w_1) \cdots x_{a_r,b_r}(w_r),
\end{displaymath}
where $a_i$ and $b_i$ are the colors of the left and right endpoints of $I_i$. We now verify that the system $\phi_{\bullet}$ is indeed an $\sE$-measure. Conditions (a) and (c) of Definition~\ref{defn:E-meas} are clear.

\begin{lemma} \label{lem:Sigma-meas-1}
Let $I$ be an interval and let $z \in I$. Write $I=J \sqcup \{z\} \sqcup K$ for intervals $J$ and $K$, and let $c=\sigma(z)$. Then for a word $w \in \Sigma^{\star}$ of length $n$, we have a natural bijection
\begin{displaymath}
I^w = \big( \coprod_{i=0}^n J^{w[1,i]} \times K^{w(i,n]} \big) \amalg \big( \coprod_{w_i=c} J^{w[1,i)} \times K^{w(i,n]} \big)
\end{displaymath}
that is equivariant for the action of $H_I(b) = H_J \times H_K$.
\end{lemma}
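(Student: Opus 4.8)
The plan is to exhibit the bijection by hand, sorting each strictly increasing tuple in $I^w$ according to its position relative to the cut point $z$, and then checking that this sorting intertwines the group actions. The whole argument is a direct unwinding of the definition of $I^w$, so it should be short.

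First I would record the geometric input: deleting $z$ from $I$ produces the two intervals $J$ and $K$ with $J < z < K$, and by the interval-decomposition isomorphism (the analog for $I$ of the one established in \S\ref{s:group}, applied with $A = \{z\}$) the stabilizer of $z$ in $H_I$ is identified with $H_J \times H_K$, each factor acting on its own interval and fixing $z$. Because a tuple $(x_1,\dots,x_n) \in I^w$ is strictly increasing, its coordinates lying in $J$ form an initial segment, those lying in $K$ form a final segment, and at most one coordinate can equal $z$. Hence $I^w$ decomposes, as a set, into the tuples that do not use $z$ and the tuples that do; I will match these two pieces with the two coproducts on the right.

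Next I would treat the two cases. For a tuple avoiding $z$, let $i \in \{0,1,\dots,n\}$ be the number of coordinates lying in $J$; then $(x_1,\dots,x_i) \in J^{w[1,i]}$ and $(x_{i+1},\dots,x_n) \in K^{w(i,n]}$ (colors match by definition of $I^w$, and the empty-word conventions cover $i=0$ and $i=n$), which places the tuple in the $i$-th term of the first coproduct; conversely, concatenating any pair from that term is again a strictly increasing tuple in $I^w$ since every point of $J$ precedes every point of $K$. For a tuple using $z$, strict monotonicity makes the index $i$ with $x_i = z$ unique and the coloring constraint forces $w_i = \sigma(z) = c$; deleting $x_i$ gives $(x_1,\dots,x_{i-1}) \in J^{w[1,i)}$ and $(x_{i+1},\dots,x_n) \in K^{w(i,n]}$, i.e. the $i$-th term of the second coproduct (indexed by those $i$ with $w_i = c$), with inverse obtained by reinserting $z$ in slot $i$. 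The two families are disjoint (one uses $z$, the other does not), so the two cases together give the asserted bijection.

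Finally I would observe that equivariance is automatic: the construction refers only to the linear order and the coloring on $I$, both respected by the $H_J \times H_K$-action on each of $J$ and $K$, so the bijection intertwines that action on $I^w$ with the product action on the right-hand side. I do not anticipate a genuine obstacle here; the only points requiring care are the index bookkeeping — keeping the closed block $w[1,i]$, used when $z$ is absent, distinct from the half-open block $w[1,i)$, used when $z$ occurs as the $i$-th coordinate — and confirming that the two summands are genuinely disjoint. Both are routine.
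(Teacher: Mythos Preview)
Your proposal is correct and follows essentially the same approach as the paper: split $I^w$ into the tuples that avoid $z$ and those that hit it, index the first family by the number $i$ of coordinates lying in $J$ and the second by the unique slot $i$ with $x_i=z$ (forcing $w_i=c$), and identify each piece with the corresponding product $J^{w[1,i]}\times K^{w(i,n]}$ or $J^{w[1,i)}\times K^{w(i,n]}$. The paper's proof is slightly more terse but the decomposition and the bookkeeping are identical.
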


\begin{proof}
Recall that $I^w$ consists of tuples $x=(x_1<\cdots<x_n)$ in $I^n$ such that $\sigma(x_i)=w_i$. Let $X$ be the subset of $I^w$ consisting of points $x$ such that no $x_i$ is equal to $z$, and let $Y$ be the complement. We have a decomposition $X=X_0 \sqcup \cdots \sqcup X_n$, where $X_i$ is the subset of $X$ consisting of points $x$ such that $x_i<z<x_{i+1}$ (and where we ignore conditions involving $x_0$ or $x_{n+1}$), and an isomorphism $X_i \cong J^{w[1,i]} \times K^{w(i,n]}$. We also have a decomposition $Y=\bigsqcup_{w_i=c} Y_i$, where $Y_i$ consists of points $x$ such that $x_i=z$, and an isomorphism $Y_i \cong J^{w[1,i)} \times K^{w(i,n]}$. This completes the proof.
\end{proof}

\begin{lemma}
The system $\phi_{\bullet}$ satisfies Definition~\ref{defn:E-meas}(b).
\end{lemma}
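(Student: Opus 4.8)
The plan is to verify Definition~\ref{defn:E-meas}(b), i.e., that for $V = G(B) \subset U = G(A)$ with both in $\sE$, and any finitary $U$-set $X$, we have $\phi_U(X) = \phi_V(X)$. By additivity it suffices to treat $X$ transitive, and by Proposition~\ref{prop:Ghat-sets} we may write $X \cong I_1^{w_1} \times \cdots \times I_r^{w_r}$ where $\bS \setminus A = I_1 \sqcup \cdots \sqcup I_r$. By an evident induction on $\# B - \# A$, we may reduce to the case $B = A \sqcup \{z\}$ with $\# C = \# B$; so $z$ lies in a single interval, say $I_1$, and adding $z$ splits $I_1$ into $J \sqcup \{z\} \sqcup K$ while leaving $I_2, \ldots, I_r$ untouched. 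Thus $\bS \setminus B = J \sqcup K \sqcup I_2 \sqcup \cdots \sqcup I_r$, and we must compare $\phi_U$ and $\phi_V$ applied to $X$.

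First I would compute $\phi_U(X)$: by definition this is $x_{a_1,b_1}(w_1) \cdot \prod_{j=2}^r x_{a_j,b_j}(w_j)$, where $a_1, b_1$ are the colors of the endpoints of $I_1$. Next, to compute $\phi_V(X)$, I need to decompose $X$ as a $V$-set. Here the key input is Lemma~\ref{lem:Sigma-meas-1}: restricting the factor $I_1^{w_1}$ along $H_{I_1} \supset H_J \times H_K$ (which is exactly what happens under $G(A) \supset G(B)$ on the first coordinate) gives the $V$-equivariant decomposition
\begin{displaymath}
I_1^{w_1} \cong \Big( \coprod_{i=0}^{n_1} J^{w_1[1,i]} \times K^{w_1(i,n_1]} \Big) \amalg \Big( \coprod_{(w_1)_i = c} J^{w_1[1,i)} \times K^{w_1(i,n_1]} \Big),
\end{displaymath}
where $n_1 = \ell(w_1)$ and $c = \sigma(z)$. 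Multiplying through by the untouched factors $I_2^{w_2} \times \cdots \times I_r^{w_r}$ gives the orbit decomposition of $X$ as a $V$-set. Applying $\phi_V$ (which is additive and sends a product of interval-sets to the corresponding product of generators), using that the endpoint colors of $J$ are $a_1$ on the left and $c$ on the right, and those of $K$ are $c$ on the left and $b_1$ on the right, we obtain
\begin{displaymath}
\phi_V(X) = \Big( \sum_{i=0}^{n_1} x_{a_1,c}(w_1[1,i]) \, x_{c,b_1}(w_1(i,n_1]) + \sum_{(w_1)_i = c} x_{a_1,c}(w_1[1,i)) \, x_{c,b_1}(w_1(i,n_1]) \Big) \cdot \prod_{j=2}^r x_{a_j,b_j}(w_j).
\end{displaymath}
By relation (c) in the definition of $R$, the bracketed sum equals $x_{a_1,b_1}(w_1)$, so $\phi_V(X) = \phi_U(X)$, as desired.

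The main obstacle — really the only nontrivial point — is making sure the $V$-equivariance in Lemma~\ref{lem:Sigma-meas-1} correctly matches the isomorphism $G(B) \cong H_J \times H_K \times H_{I_2} \times \cdots \times H_{I_r}$ coming from the interval decomposition, so that the displayed decomposition of $X$ really is its decomposition into $V$-orbits (and not merely into $V$-stable pieces). This amounts to checking that each $J^{w_1[1,i]} \times K^{w_1(i,n_1]} \times \prod_{j\ge 2} I_j^{w_j}$ is a single $V$-orbit, which follows because each factor is a transitive set for the corresponding factor group by homogeneity of the intervals (as recorded in \S\ref{s:group}). Once this bookkeeping is in place, the identification of $\phi_V(X)$ with $\phi_U(X)$ is exactly relation (c), and there is nothing further to do. The induction on $\#B - \#A$ needs only transitivity of the $V$-sets involved, which is preserved at each one-point step by the same argument, so no additional complication arises there.
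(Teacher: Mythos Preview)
Your proof is correct and follows essentially the same approach as the paper: reduce to the case $B = A \cup \{z\}$, apply Lemma~\ref{lem:Sigma-meas-1} to decompose $I_1^{w_1}$ under $H_J \times H_K$, and then invoke relation~(c) of $R$ to collapse the resulting sum back to $x_{a_1,b_1}(w_1)$. (The stray clause ``with $\# C = \# B$'' appears to be an editing artifact, since $C$ is never defined; otherwise the argument matches the paper's proof line for line, with your added remark on transitivity of the pieces being a helpful clarification the paper leaves implicit.)
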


\begin{proof}
It suffices to treat the case where $V$ is a maximal subgroup of $U$. We can thus assume $V=G(B)$ and $U=G(A)$ where $B=A \cup \{z\}$ and $z$ is some element of $\bS \setminus A$; let $c=\sigma(z)$ be the color of $z$. Write $\bS \setminus A = I_1 \sqcup \cdots \sqcup I_r$ as above; cyclically rotating the labels, if necessary, we assume that $z \in I_1$. Write $I_1=J \sqcup \{z\} \sqcup K$. Let $X=I_1^{w_1} \times \cdots \times I_r^{w_r}$ be a transitive $G(A)$-set. Decomposing $I_1^{w_1}$ by Lemma~\ref{lem:Sigma-meas-1}, we find that $\phi_V(X)$ is equal to
\begin{displaymath}
\big( \sum_{i=0}^n x_{a_1,c}(w[1,i]) x_{c,b_1}(w(i,n]) + \sum_{w_i=c} x_{a_1,c}(w[1,i)) x_{c,b_1}(w(i,n]) \big) \times x_{a_2,b_2}(w_2) \cdots x_{a_r,b_r}(w_r)
\end{displaymath}
By definition of $R$, the first factor is equal to $x_{a_1,b_1}(w_1)$, and so the whole expression is equal to $\phi_U(X)$. Thus $\phi_V(X)=\phi_U(X)$, as required.
\end{proof}

\begin{lemma}
The system $\phi_{\bullet}$ satisfies Definition~\ref{defn:E-meas}(d).
\end{lemma}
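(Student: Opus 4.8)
The plan is to use Proposition~\ref{prop:minimal}, which reduces verifying Definition~\ref{defn:E-meas}(d) to checking only the multiplicativity in \emph{minimal} fibrations, i.e.\ condition (d'). Since the collection $\sE$ of subgroups $G(A)$ with $\#A \ge 2$ is downwards closed (any open subgroup of $G(A)$ is again some $G(B)$ with $B \supseteq A$, and $\#B \ge \#A \ge 2$), Proposition~\ref{prop:minimal} applies, and the previous two lemmas already gave us (a), (b), (c). So it remains only to verify (d') for the system $\phi_\bullet$.

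Next I would invoke the classification of minimal maps in Proposition~\ref{prop:minimal-class}. After re-indexing the intervals, any minimal map of transitive $G(A)$-sets has the form
\begin{displaymath}
\pi \colon I_1^{wcw'} \times I_2^{w_2} \times \cdots \times I_r^{w_r} \to I_1^{ww'} \times I_2^{w_2} \times \cdots \times I_r^{w_r},
\end{displaymath}
projecting away the $c$-coordinate in the first factor. Applying $\phi_\bullet$ and using its defining formula, $\phi_{G(A)}$ of the source is $x_{a_1,b_1}(wcw') \cdot \prod_{i \ge 2} x_{a_i,b_i}(w_i)$ and of the target is $x_{a_1,b_1}(ww') \cdot \prod_{i \ge 2} x_{a_i,b_i}(w_i)$. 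By relation (b) in the definition of $R$, we have $x_{a_1,b_1}(wcw') = x_{a_1,b_1}(ww') \, x_{p,q}(c)$, where $p$ is the last letter of $w$ (or $a_1$ if $w$ is empty) and $q$ is the first letter of $w'$ (or $b_1$ if $w'$ is empty). Thus $\phi_{G(A)}(\text{source}) = \phi_{G(A)}(\text{target}) \cdot x_{p,q}(c)$, so it suffices to identify $x_{p,q}(c)$ with $\phi_V(F)$ for the fiber $F$ and an appropriate $V \in \sE$ stabilizing the chosen point.

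This identification is exactly what Proposition~\ref{prop:minimal-class}(b) supplies: the fiber $F$ is isomorphic to $J^c$, where $J = (y,y')$ is a subinterval of $I_1$ whose left endpoint $y$ has color $p$ (the final letter of $w$, or the left endpoint of $I_1$, which has color $a_1$, if $w$ is empty) and whose right endpoint $y'$ has color $q$ (analogously). Choosing $V = G(B)$ where $B$ is the chosen point's set of coordinates together with $A$ — so that $V$ stabilizes the point and $J$ is one of the intervals cut out by $B$ — we get $\phi_V(J^c) = x_{p,q}(c)$ directly from the definition of $\phi_\bullet$. Combining the three displayed equalities gives $\phi_{G(A)}(\text{source}) = \phi_{G(A)}(\text{target}) \cdot \phi_V(F)$, which is (d'). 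I expect the only mildly delicate point to be bookkeeping the endpoint-color conventions (the ``$w$ empty'' and ``$w'$ empty'' cases), making sure the conventions in Definition~\ref{defn:Sigma-meas}(b)/relation (b) of $R$ match those in Proposition~\ref{prop:minimal-class}(b); once that is checked, the argument is a direct comparison of formulas with no real analytic or combinatorial obstacle.
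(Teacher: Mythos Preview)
Your proposal is correct and follows essentially the same approach as the paper: reduce to (d') via Proposition~\ref{prop:minimal} (using that $\sE$ is downwards closed), invoke the classification of minimal maps in Proposition~\ref{prop:minimal-class}, and then observe that the multiplicativity equation reduces exactly to the defining relation (b) of $R$. The paper's proof is slightly terser but the logic and the key inputs are identical, including the endpoint-color bookkeeping you flag.
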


\begin{proof}
It follows from the classification of open subgroups of $H$ (Proposition~\ref{prop:subgp}) that $\sE$ satisfies the condition of Proposition~\ref{prop:minimal}. Thus, by that proposition, it is enough to verify Proposition~\ref{prop:minimal}(d'). Let $A$ be a finite subset of $\bS$ of cardinality at least~2, and write $\bS \setminus A = I_1 \sqcup \cdots \sqcup I_r$. Let $a_i$ and $b_i$ be the colors of the endpoints of $I_i$. Let $\pi \colon X \to Y$ be a minimal map of transitive $G(A)$-sets with fiber $F$. By Proposition~\ref{prop:minimal-class}, after possibly reindexing, $\pi$ is isomorphic to
\begin{displaymath}
\pi_1 \times \id \times \cdots \times \id \colon I_1^{wcw'} \times I_2^{w_2} \times \cdots \times I_r^{w_r} \to I_1^{ww'} \times I_2^{w_2} \times \cdots \times I_r^{w_r},
\end{displaymath}
where $\pi_1$ projects away from the $c$ coordinate. Additionally, $F$ is isomorphic to $J^c$, where $J$ is an interval with endpoints of colors $w_r$ and $w'_1$, with $r=\ell(w)$; here we use the convention that $w_r=a_1$ if $w=\emptyset$, and $w'_1=b_1$ if $w'=\emptyset$. The equation $\phi(X)=\phi(Y) \phi(F)$ thus becomes
\begin{displaymath}
x_{a_1,b_1}(wcw') x_{a_2,b_2}(w_2) \cdots x_{a_r,b_r}(w_r) = x_{a_1,b_1}(ww') x_{w_r,w_1'}(c) x_{a_2,b_2}(w_2) \cdots x_{a_r,b_r}(w_r),
\end{displaymath}
which does indeed hold in $R$: this is just the definining relation (b) of $R$, multiplied on each side by the same quantity.
\end{proof}

We have thus verified that the system $\phi_{\bullet}$ is an $\sE$-measure. By Proposition~\ref{prop:meas-E-meas}, $\phi_{\bullet}$ corresponds to a measure for $G$ valued in $R$, i.e., a ring homomorphism $\phi \colon \Theta(G) \to R$. This homomorphism clearly satisfies $\phi([I^w])=x_{a,b}(w)$, where $a$ and $b$ are the colors of the endpoints of $I$. The following lemma completes the proof of the proposition.

\begin{lemma}
The map $\phi \colon \Theta(G) \to R$ is an isomorphism.
\end{lemma}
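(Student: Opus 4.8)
The plan is to exhibit a two-sided inverse to $\phi$. We have already constructed $\phi \colon \Theta(G) \to R$ and know $\phi([I^w]) = x_{a,b}(w)$. To go the other way, I would define a ring homomorphism $\psi \colon R \to \Theta(G)$ by sending the generator $x_{a,b}(w)$ to the class $[I^w]$, where $I$ is any proper interval with left and right endpoints colored $a$ and $b$; by Proposition~\ref{prop:S-homo}(a) and the discussion of intervals in \S\ref{s:group}, the $G$-set $I^w$ depends up to isomorphism only on $a$, $b$, and $w$, so this assignment is well-defined on generators. The main task is then to check that $\psi$ respects the three defining relations of $R$, i.e.\ that the classes $[I^w]$ satisfy relations (a), (b), (c) inside $\Theta(G)$.

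For relation (a): if $w$ is empty then $I^w$ is the one-point set, so $[I^w] = 1$ by the normalization relation in $\Theta(G)$. For relation (b): choose $A$ to be a two-element set whose complement has $I_1 = I$ an interval with endpoints colored $a,b$ (and a second interval $I_2$), and apply Proposition~\ref{prop:minimal-class} to the minimal map $I^{wcw'} \times I_2^{\emptyset} \to I^{ww'} \times I_2^{\emptyset}$ projecting away the $c$-coordinate; its fiber is $J^c$ with $J$ an interval whose endpoints have colors $w_r$ and $w'_1$ (with the stated conventions), so the multiplicativity-in-fibrations relation in $\Theta(G)$ gives $[I^{wcw'}] = [I^{ww'}] \cdot [J^c]$, which is exactly relation (b). For relation (c): take a point $z \in I$ of color $c$, write $I = J \sqcup \{z\} \sqcup K$, and apply Lemma~\ref{lem:Sigma-meas-1}, which gives a $G$-equivariant decomposition of $I^w$ into a disjoint union of products $J^{?} \times K^{?}$; taking classes in $\Theta(G)$ and using additivity and multiplicativity of the Burnside classes yields precisely the identity in relation (c). (Strictly, one must note that the pieces $J^{w[1,i]} \times K^{w(i,n]}$ are $G(B)$-sets for an appropriate $B$, and that their classes in $\Theta(G)$ factor as $x$-monomials; this is exactly the content already used in the construction of $\phi_{\bullet}$.) Hence $\psi$ is a well-defined ring homomorphism.

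Finally I would check $\psi \circ \phi = \id$ and $\phi \circ \psi = \id$. The composite $\phi \circ \psi$ sends $x_{a,b}(w) \mapsto [I^w] \mapsto x_{a,b}(w)$, so it is the identity since the $x_{a,b}(w)$ generate $R$. For $\psi \circ \phi$, the subtle point is that $\psi \circ \phi$ is a priori only known to fix the classes $[I^w]$ for $I$ a \emph{proper} interval; but by Proposition~\ref{prop:Ghat-sets}, every transitive $\hat G$-set is a product $I_1^{w_1} \times \cdots \times I_r^{w_r}$, and when $A$ has at least two elements all the $I_j$ are proper intervals, so such classes are monomials in the $[I^w]$ and are therefore fixed. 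Since $\Theta(G)$ is generated by the classes of finitary $\hat G$-sets, and every such class is a $\bZ$-linear combination of transitive ones which in turn (after enlarging the base set $A$ to have size $\ge 2$, which does not change the $\hat G$-set) are monomials in proper-interval classes, it follows that $\psi \circ \phi = \id$.

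The step I expect to be the main obstacle is verifying relation (c) under $\psi$: one must be careful that the decomposition of Lemma~\ref{lem:Sigma-meas-1} is equivariant for a single open subgroup of $G$ (so that all the pieces are genuine $\hat G$-sets with classes in $\Theta(G)$), and that passing to classes legitimately uses only the additivity and fibration axioms already imposed on $\Theta(G)$ — in other words, that no relation beyond (a), (b), (c) is secretly needed. Once that bookkeeping is in place, everything else is formal.
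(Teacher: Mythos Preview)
Your proposal is correct and follows essentially the same approach as the paper: the paper also defines an inverse $\psi$ on generators by $x_{a,b}(w)\mapsto[I^w]$, notes that conjugacy of intervals with the same endpoint colors makes this well-defined, and then says that ``by computations similar to the ones carried out above'' the defining relations of $R$ are killed; you have simply spelled out those computations (and the generation of $\Theta(G)$ by proper-interval classes) in detail. Your caution about relation~(c) is well-placed but resolved exactly as you indicate: the decomposition of Lemma~\ref{lem:Sigma-meas-1} is $G(B)$-equivariant for a single $B$, additivity gives the sum, and the fibration axiom factors each $[J^{u}\times K^{v}]$ as $[J^{u}]\cdot[K^{v}]$.
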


\begin{proof}
Let $\tilde{R}$ be the polynomial ring in the symbols $x_{a,b}(w)$. Define a ring homomorphism $\tilde{\psi} \colon \tilde{R} \to \Theta(G)$ by $\psi(x_{a,b}(w))=[I^w]$, where $I$ is any proper interval with endpoints of colors $a$ and $b$. This is well-defined since if $J$ is a second such interval then $I$ and $J$ are conjugate by an element of $G$, and so $[I^w]=[J^w]$ in $\Theta(G)$. By computations similar to the ones carried out above, we see that $\tilde{\psi}$ kills the defining relations of $R$, and thus induces a ring homomorphism $\psi \colon R \to \Theta(G)$. This is clearly inverse to $\phi$, and so the proof is complete.
\end{proof}

\subsection{Measures and symbols}

We just proved that measures for $G$ are equivalent to $\Sigma$-measures. This is a significant step forward since $\Sigma$-measures are purely combinatorial objects. However, they are still rather complicated: $\Sigma$-measures have infinitely many parameters and defining equations. We now introduce $\Sigma$-symbols, which have finitely many parameters and defining equations, and connect them to $\Sigma$-measures.

\begin{definition} \label{defn:symbol}
\begin{subequations}
A \defn{$\Sigma$-symbol} with values in $k$ is a function
\begin{displaymath}
\eta \colon \Sigma^3 \to k, \qquad (a, b, c) \mapsto \eta_{a,b}(c)
\end{displaymath}
satisfying the following two conditions, for all $a,b,c,d \in \Sigma$:
\begin{align}
\eta_{a,b}(c) \eta_{c,b}(d) &= \eta_{a,b}(d) \eta_{a,d}(c) \label{eq:symb-1} \\
\eta_{a,b}(c) &= \eta_{a,d}(c)+\eta_{d,b}(c)+\delta_{c,d} \label{eq:symb-2}
\end{align}
We let $\cS_{\Sigma}(k)$ denote the set of all $\Sigma$-symbols.
\end{subequations}
\end{definition}

\begin{proposition}
We have a natural bijection $\cM_{\Sigma}(k) \to \cS_{\Sigma}(k)$ given by restricting measures to words of length~1. In other words, if $\nu$ is a $\Sigma$-measure then $(a,b,c) \mapsto \nu_{a,b}(c)$ is a $\Sigma$-symbol, and this construction is bijective.
\end{proposition}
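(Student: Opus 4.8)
The plan is to construct an inverse map explicitly and then check that both compositions are the identity. Given a $\Sigma$-symbol $\eta$, I would define $\nu_{a,b}(w)$ for an arbitrary word $w = w_1 \cdots w_n$ by iterating the relation in Definition~\ref{defn:Sigma-meas}(b): since that relation says $\nu_{a,b}(wcw') = \nu_{a,b}(ww') \, \nu_{w_r, w'_1}(c)$, every value $\nu_{a,b}(w)$ is forced to be a product of symbol-values $\eta_{?,?}(w_i)$, one for each letter of $w$. Concretely, I would set
\begin{displaymath}
\nu_{a,b}(w_1 \cdots w_n) = \prod_{i=1}^{n} \eta_{p_i, q_i}(w_i),
\end{displaymath}
where $p_i$ is the color immediately to the left of position $i$ (i.e.\ $w_{i-1}$, or $a$ if $i=1$) and $q_i$ is the color immediately to the right (i.e.\ $w_{i+1}$, or $b$ if $i=n$). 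This is the unique candidate, so the content is to verify axioms (a), (b), (c) of Definition~\ref{defn:Sigma-meas} for this $\nu$, and conversely that a genuine $\Sigma$-measure is recovered by this formula from its length-one values (which requires knowing the product formula holds for an actual $\Sigma$-measure — but that itself follows by an easy induction from axiom (b), peeling off one letter at a time).

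The verification breaks into three parts. Axiom (a) is immediate (empty product is $1$). Axiom (b) for the constructed $\nu$: inserting a letter $c$ between positions $r$ and $r+1$ of $ww'$ changes the product by exactly the factor $\eta_{w_r, w'_1}(c)$ and leaves all other factors unchanged — the left-neighbor of the old position $r+1$, $r+2$, etc.\ are not affected because the letters to their immediate left and right in $wcw'$ past the insertion point are the same as before. Wait — one must be slightly careful: inserting $c$ does change the right-neighbor of the last letter of $w$ and the left-neighbor of the first letter of $w'$. So the factor $\eta_{w_{r-1}, w_r}(w_r)$... no: the factor associated to position $r$ (the last letter of $w$) was $\eta_{w_{r-1}, w'_1}(w_r)$ in $ww'$ and becomes $\eta_{w_{r-1}, c}(w_r)$ in $wcw'$; similarly the factor for the first letter of $w'$ changes. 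So the naive product formula does \emph{not} satisfy (b) on the nose, and one needs relation~\eqref{eq:symb-1} to compensate. This is the main obstacle: showing that the product is independent of the order in which letters are peeled off / inserted, which is precisely an associativity-type coherence statement, and \eqref{eq:symb-1} is the cocycle condition that makes it work. I would prove a lemma: \eqref{eq:symb-1} implies that the product $\prod_i \eta_{p_i,q_i}(w_i)$ is well-defined (order-independent), essentially because \eqref{eq:symb-1} rearranged says $\eta_{a,b}(c)/\eta_{a,d}(c) = \eta_{c,b}(d)/\eta_{a,d}(d) \cdot (\dots)$ — more cleanly, \eqref{eq:symb-1} is exactly the compatibility needed so that deleting two letters in either order gives the same answer.

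Then axiom (c): expand $\nu_{a,b}(w)$ using the claimed formula and match against $\sum_{i=0}^n \nu_{a,c}(w[1,i]) \nu_{c,b}(w(i,n]) + \sum_{w_i = c} \nu_{a,c}(w[1,i)) \nu_{c,b}(w(i,n])$. Both sides, after using (b) to factor everything into $\eta$'s, reduce to an identity in the $\eta_{?,?}(w_j)$'s that should follow by induction on $n$ from \eqref{eq:symb-2} (the $n=1$ case is \emph{exactly} \eqref{eq:symb-2}, with the $\delta_{c,d}$ term accounting for $w_1 = c$). I would run the induction by splitting off the first letter of $w$: both sides split according to whether $c$ is inserted before or after position $1$, using (b) and the inductive hypothesis on $w(1,n]$, with \eqref{eq:symb-2} handling the boundary term. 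Finally, the two compositions: starting from a symbol, the formula restricted to length-one words gives back $\eta$ trivially; starting from a $\Sigma$-measure $\nu$, I must check $\nu$ agrees with the formula built from its length-one restriction — but this is the induction on $\ell(w)$ using axiom (b) mentioned above, so it is essentially free. The bijection and its naturality in $k$ then follow. I expect the order-independence lemma (requiring \eqref{eq:symb-1}) and the inductive verification of axiom (c) (requiring \eqref{eq:symb-2}) to be the two real pieces of work; everything else is bookkeeping with the conventions $w_r = a$, $w'_1 = b$ for empty words.
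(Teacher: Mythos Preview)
Your strategy matches the paper's: establish injectivity by noting that axiom (b) forces $\nu$ to be determined by its length-one values, then construct an explicit inverse from a symbol $\eta$ and verify the $\Sigma$-measure axioms, with \eqref{eq:symb-1} handling (b) and \eqref{eq:symb-2} handling (c) by induction on word length.

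The difference is in the choice of formula for the inverse, and your choice makes the work harder. You propose the ``local neighbor'' product $\nu_{a,b}(w) = \prod_i \eta_{w_{i-1}, w_{i+1}}(w_i)$. The paper instead uses the recursion $\nu_{a,b}(w_1\cdots w_n) = \eta_{a,b}(w_n)\,\nu_{a,w_n}(w_1\cdots w_{n-1})$, which unwinds to $\prod_i \eta_{a, w_{i+1}}(w_i)$ with $w_{n+1}=b$; this is exactly what iterating (b) by deleting letters left to right produces. With the paper's choice, verifying (b) reduces after cancelling common factors to the single identity $\eta_{a,c}(w_r)\,\eta_{a,w'_1}(c) = \eta_{a,w'_1}(w_r)\,\eta_{w_r,w'_1}(c)$, which is literally \eqref{eq:symb-1}. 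With your choice, as you noticed, inserting $c$ alters two neighboring factors, and the resulting five-letter identity is not \eqref{eq:symb-1} alone: one also needs the auxiliary fact $\eta_{a,c}(b)\,\eta_{a,d}(c) = \eta_{a,c}(b)\,\eta_{b,d}(c)$ (the paper's Lemma~\ref{lem:explicit-1}), which in turn rests on consequences of both symbol axioms such as $\eta_{c,b}(a)\,\eta_{c,a}(b) = 0$ for $a\ne b$. Your formula is ultimately correct --- the paper later proves exactly your product expression as Lemma~\ref{lem:explicit-2} --- but getting there is extra work that the paper's recursion sidesteps. Note also that your first paragraph is slightly off: your formula is not produced by \emph{any} iteration of (b), since each application of (b) uses the \emph{current} neighbors after prior deletions, not the original ones in $w$, and no deletion order leaves every letter with both original neighbors intact. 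Your plan for (c), peeling off one letter and inducting via \eqref{eq:symb-2}, is exactly what the paper does (it peels from the right rather than the left).
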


We break the proof into several lemmas.

\begin{lemma}
Let $\nu$ be a $\Sigma$-measure and define $\eta \colon \Sigma^3 \to k$ by $\eta_{a,b}(c) = \nu_{a,b}(c)$. Then $\eta$ is a $\Sigma$-symbol, and $\nu$ can be recovered from $\eta$.
\end{lemma}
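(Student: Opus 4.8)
The plan is to show that a $\Sigma$-measure $\nu$ is completely reconstructible from its restriction $\eta$ to length-one words, and that $\eta$ necessarily satisfies \eqref{eq:symb-1} and \eqref{eq:symb-2}. For the reconstruction, I would argue by induction on $\ell(w)$ that $\nu_{a,b}(w)$ is a polynomial in the values $\eta_{x,y}(z) = \nu_{x,y}(z)$. The base cases $\ell(w) \le 1$ are immediate. For $\ell(w) = n \ge 2$, write $w = w' c w''$ with $w'$ nonempty (say $c = w_n$, so $w' = w[1,n)$ and $w''$ empty); then Definition~\ref{defn:Sigma-meas}(b) gives $\nu_{a,b}(w) = \nu_{a,b}(w[1,n)) \cdot \nu_{w_{n-1}, b}(w_n)$, and both factors are known by induction (the first has length $n-1$, the second length $1$). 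This establishes injectivity of the restriction map and simultaneously shows how to recover $\nu$ from $\eta$.

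Next I would verify that $\eta$ satisfies the two symbol relations. Relation \eqref{eq:symb-2}: this is exactly Definition~\ref{defn:Sigma-meas}(c) applied to the length-one word $w = c$, splitting at the single letter $c = w_1$ (now the summation index in the first sum ranges over $i \in \{0,1\}$, and the second sum has the single term $i=1$). Writing it out with the conventions on empty words, the right-hand side becomes $\nu_{a,d}(c)\,\nu_{d,b}(\emptyset) + \nu_{a,d}(\emptyset)\,\nu_{d,b}(c) + \delta_{c,d}\,\nu_{a,d}(\emptyset)\nu_{d,b}(\emptyset)$; using axiom (a) that $\nu$ on the empty word is $1$, this collapses to $\eta_{a,d}(c) + \eta_{d,b}(c) + \delta_{c,d}$, which is \eqref{eq:symb-2}. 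Relation \eqref{eq:symb-1}: apply Definition~\ref{defn:Sigma-meas}(b) to the word $w = cd$ (of length $2$) in two ways. Reading it as inserting $d$ after $c$ gives $\nu_{a,b}(cd) = \nu_{a,b}(c)\,\nu_{c,b}(d) = \eta_{a,b}(c)\eta_{c,b}(d)$; reading it as inserting $c$ before $d$ gives $\nu_{a,b}(cd) = \nu_{a,b}(d)\,\nu_{a,d}(c) = \eta_{a,b}(d)\eta_{a,d}(c)$. Equating the two expressions for $\nu_{a,b}(cd)$ yields \eqref{eq:symb-1}.

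I expect the only subtlety — rather than a genuine obstacle — to be bookkeeping the empty-word conventions in Definition~\ref{defn:Sigma-meas}(b,c) correctly when the words involved are very short, so that the axioms really do specialize to the clean identities \eqref{eq:symb-1} and \eqref{eq:symb-2}. The surjectivity direction (that every $\Sigma$-symbol extends to a $\Sigma$-measure) is not needed for this lemma and will be handled in the subsequent lemmas; here the content is just these two specializations plus the inductive reconstruction formula.
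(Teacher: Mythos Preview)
Your proposal is correct and follows essentially the same approach as the paper: both derive \eqref{eq:symb-1} by applying Definition~\ref{defn:Sigma-meas}(b) to the length-two word $cd$ in two ways, derive \eqref{eq:symb-2} by specializing Definition~\ref{defn:Sigma-meas}(c) to the length-one word $w=c$ with splitting letter $d$, and obtain the reconstruction by iterating axiom (b) to peel off one letter at a time. Your write-up simply spells out the bookkeeping (empty-word conventions, the induction) a bit more explicitly than the paper does.
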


\begin{proof}
Making the substitution $(w,c,w') \to (c,d,\emptyset)$ in Definition~\ref{defn:Sigma-meas}(b), we find
\begin{displaymath}
\nu_{a,b}(c d) = \nu_{a,b}(c) \nu_{c,b}(d).
\end{displaymath}
Making the substitution $(w,c,w') \to (\emptyset,c,d)$ in the same axiom, we find
\begin{displaymath}
\nu_{a,b}(c d) = \nu_{a,b}(d) \nu_{a,b}(c).
\end{displaymath}
This gives \eqref{eq:symb-1}. Making the substition $(w,c) \to (c,d)$ in Definition~\ref{defn:Sigma-meas}(c) gives \eqref{eq:symb-2}. Thus $\eta$ is a $\Sigma$-symbol. Applying Definition~\ref{defn:Sigma-meas}(b) iteratively, we see that $\nu$ is determined by its values on length~1 words. These values are recorded by $\eta$, and so $\nu$ can be recovered from $\eta$.
\end{proof}

The above lemma provides us with an injective function $\Phi \colon \cM_{\Sigma}(k) \to \cS_{\Sigma}(k)$. To complete the proof of the proposition, we must show that $\Phi$ is surjective. Let a $\Sigma$-symbol $\eta$ be given. We recursively define $\nu$ by $\nu_{a,b}(\emptyset)=1$ and
\begin{displaymath}
\nu_{a,b}(w_1 \cdots w_n)=\eta_{a,b}(w_n) \nu_{a,w_n}(w_1\cdots w_{n-1})
\end{displaymath}
for $n \ge 1$. We clearly have $\Phi(\nu)=\eta$, provided that $\nu$ is a $\Sigma$-measure. It thus suffices to show this, which we do in the next two lemmas.

\begin{lemma}
$\nu$ satisfies Definition~\ref{defn:Sigma-meas}(b).
\end{lemma}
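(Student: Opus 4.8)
The plan is a double induction. The goal is the identity of Definition~\ref{defn:Sigma-meas}(b),
\[
\nu_{a,b}(wcw') = \nu_{a,b}(ww')\,\nu_{w_r,w'_1}(c), \qquad r=\ell(w),
\]
with the usual conventions ($w_r=a$ when $w$ is empty, $w'_1=b$ when $w'$ is empty). The outer induction is on $\ell(w')$; the base case $w'=\emptyset$ is itself proved by an inner induction on $\ell(w)$, and that inner argument is the only place where an axiom of a $\Sigma$-symbol gets used.

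For the base case $w'=\emptyset$ we have $w'_1=b$ and $\nu_{w_r,b}(c)=\eta_{w_r,b}(c)$, so we must show $\nu_{a,b}(wc)=\nu_{a,b}(w)\,\eta_{w_r,b}(c)$. Unwinding the recursive definition of $\nu$ once gives $\nu_{a,b}(wc)=\eta_{a,b}(c)\,\nu_{a,c}(w)$, so it suffices to establish the auxiliary claim
\[
\eta_{a,b}(c)\,\nu_{a,c}(w) = \eta_{w_r,b}(c)\,\nu_{a,b}(w)
\]
for every word $w$ (trivially true for $w=\emptyset$, where $w_r=a$). I would prove this by induction on $\ell(w)$: writing $w=ud$ with $d=w_r$ its last letter, the recursion gives $\nu_{a,c}(w)=\eta_{a,c}(d)\,\nu_{a,d}(u)$ and $\nu_{a,b}(w)=\eta_{a,b}(d)\,\nu_{a,d}(u)$, so the claim reduces to the pointwise identity $\eta_{a,b}(c)\,\eta_{a,c}(d)=\eta_{d,b}(c)\,\eta_{a,b}(d)$ — which is exactly \eqref{eq:symb-1} after interchanging the roles of $c$ and $d$ (and for $\ell(u)=0$ it is immediate).

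For the inductive step, let $\ell(w')=m\ge 1$ and write $w'=v\,e$ with $e=w'_m$, so $\ell(v)=m-1$ and the first letter of $v$ is $w'_1$ (read as $e=w'_1$ if $v=\emptyset$, consistent with the convention). Peeling off $e$ by the recursion, $\nu_{a,b}(wcw')=\eta_{a,b}(e)\,\nu_{a,e}(wcv)$; the outer inductive hypothesis applied to $wcv$ (suffix length $m-1$) gives $\nu_{a,e}(wcv)=\nu_{a,e}(wv)\,\nu_{w_r,w'_1}(c)$; and running the recursion in reverse gives $\eta_{a,b}(e)\,\nu_{a,e}(wv)=\nu_{a,b}(wve)=\nu_{a,b}(ww')$. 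Stringing these together yields the desired equation.

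The only genuine content is a single application of \eqref{eq:symb-1}; the rest is bookkeeping with the recursion and with the empty-word conventions. Accordingly, the main thing to be careful about — the ``obstacle,'' such as it is — is keeping the conventions straight: tracking precisely what $w_r$ and $w'_1$ denote when $w$, $w'$, or $v$ is empty, so that the reassembly step $\eta_{a,b}(e)\,\nu_{a,e}(wv)=\nu_{a,b}(ww')$ and the base case really do match up with the statement being proved.
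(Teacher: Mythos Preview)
Your argument is correct and follows essentially the same route as the paper: both proofs induct by peeling off the last letter of $w'$ (your inductive step is identical to the paper's ``$w'$ non-empty'' case) and reduce everything to a single use of \eqref{eq:symb-1}. The only difference is in the $w'=\emptyset$ case: you establish the auxiliary identity $\eta_{a,b}(c)\,\nu_{a,c}(w)=\eta_{w_r,b}(c)\,\nu_{a,b}(w)$ directly (note that your ``inner induction'' never actually uses its hypothesis --- one unwinding plus \eqref{eq:symb-1} suffices for every $w$), whereas the paper instead peels the \emph{first} letter of $w$ using the already-proven $w'\ne\emptyset$ case at the same total length and then appeals to the induction twice. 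Your handling of this case is arguably a bit cleaner.
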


\begin{proof}
We must show
\begin{displaymath}
\nu_{a,b}(w c w') = \nu_{a,b}(ww') \nu_{w_r,w'_1}(c)
\end{displaymath}
for all $a,b,c \in \Sigma$ and $w,w' \in \Sigma^{\star}$, where $r=\ell(w)$; recall the convention that $w_r=a$ if $w=\emptyset$ and $w'_1=b$ if $w'=\emptyset$. We proceed by induction on the length $n$ of the word $wcw'$. The base case ($n=1$) is trivial, and the $n=2$ case follows directly from \eqref{eq:symb-1}. Suppose now that $n \ge 3$ and the identity holds in length $n-1$.

First suppose that $w'$ is non-empty, and let $s=\ell(w')$. We have
\begin{align*}
\nu_{a,b}(wc w')
&= \eta_{a,b}(w'_s) \nu_{a,w'_s} (w c w'_1 \cdots w'_{s-1}) \\
&= \eta_{a,b}(w'_s) \nu_{a,w'_s}(w w'_1 \cdots w'_{s-1}) \nu_{w_r,w'_1}(c) \\
&= \nu_{a,b}(ww') \nu_{w_r,w'_1}(c).
\end{align*}
In the first and third steps, we used the definition of $\mu$, while in the second we used the inductive hypothesis.

Now suppose that $w'$ is empty. Since $n \ge 3$, we have $r \ge 2$. We have
\begin{align*}
\nu_{a,b}(w c)
&= \nu_{a,b}(w_2 \cdots w_r c) \nu_{a,w_2}(w_1) \\
&= \nu_{a,b}(w_2 \cdots w_r) \nu_{w_r,b}(c) \nu_{a,w_2}(w_1) \\
&= \nu_{a,b}(w) \nu_{w_r,b}(c).
\end{align*}
In the first step, we applied  the previous paragraph with with $(w, \rho, w')$ being $(\emptyset, w_1, w_2 \cdots w_r c)$. In the final two steps, we used the inductive hypothesis. This completes the proof.
\end{proof}

\begin{lemma}
$\nu$ satisfies Definition~\ref{defn:Sigma-meas}(c).
\end{lemma}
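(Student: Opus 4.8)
The plan is to induct on the length $n$ of $w$, using the recursive definition of $\nu$ together with the symbol relation \eqref{eq:symb-2} (relation \eqref{eq:symb-1} is not needed for this part). The base case $n=0$ is immediate: the right-hand side of Definition~\ref{defn:Sigma-meas}(c) collapses to the single term $\nu_{a,c}(\emptyset)\nu_{c,b}(\emptyset)=1$, which equals $\nu_{a,b}(\emptyset)$.

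For the inductive step, take $w=w_1\cdots w_n$ with $n\ge1$ and let $v=w[1,n)$ be the word obtained by deleting the last letter $w_n$, so that $\nu_{a,b}(w)=\eta_{a,b}(w_n)\,\nu_{a,w_n}(v)$ by construction. First I would rewrite the right-hand side of (c) for $w$ by peeling off the two terms with $i=n$: in the first sum this term is $\nu_{a,c}(w)$, and in the second sum it is present only when $w_n=c$, where it equals $\nu_{a,c}(v)$; using the recursion, these two boundary contributions combine to $\bigl(\eta_{a,c}(w_n)+\delta_{w_n,c}\bigr)\nu_{a,w_n}(v)$. For every remaining index $i<n$, the subword $w(i,n]$ ends in $w_n$, so the recursion gives $\nu_{c,b}(w(i,n])=\eta_{c,b}(w_n)\,\nu_{c,w_n}(w(i,n-1])$; pulling the common factor $\eta_{c,b}(w_n)$ out of all the $i<n$ terms leaves
\begin{displaymath}
\sum_{i=0}^{n-1}\nu_{a,c}(w[1,i])\,\nu_{c,w_n}(w(i,n-1]) \;+\; \sum_{\substack{1\le i\le n-1\\ w_i=c}}\nu_{a,c}(w[1,i))\,\nu_{c,w_n}(w(i,n-1]),
\end{displaymath}
which is exactly the right-hand side of (c) for the shorter word $v$ with endpoint colors $a$ and $w_n$ and auxiliary color $c$; by the inductive hypothesis it equals $\nu_{a,w_n}(v)$.

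Assembling the pieces, the right-hand side of (c) for $w$ becomes $\bigl(\eta_{a,c}(w_n)+\eta_{c,b}(w_n)+\delta_{w_n,c}\bigr)\nu_{a,w_n}(v)$, and invoking \eqref{eq:symb-2} with its roles ``$c$'' and ``$d$'' played by $w_n$ and $c$ respectively collapses the parenthesized factor to $\eta_{a,b}(w_n)$, yielding $\eta_{a,b}(w_n)\,\nu_{a,w_n}(v)=\nu_{a,b}(w)$, as desired. I expect the only delicate point to be the accounting of the $i=n$ boundary terms, and in particular the Kronecker-delta contribution --- getting these to line up with the $\delta_{c,d}$ in \eqref{eq:symb-2} is the heart of the argument --- whereas everything else is a mechanical unwinding of the recursive definition of $\nu$.
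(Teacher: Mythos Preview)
Your proof is correct and follows essentially the same approach as the paper: induct on $n$, peel off the $i=n$ terms from both sums (handling the $w_n=c$ boundary via the Kronecker delta), use the recursion $\nu_{c,b}(w(i,n])=\eta_{c,b}(w_n)\nu_{c,w_n}(w(i,n))$ on the remaining terms to recognize the inductive hypothesis for $v=w[1,n)$, and then invoke \eqref{eq:symb-2} with $(c,d)\to(w_n,c)$ to collapse the three-term factor. Your observation that only \eqref{eq:symb-2} is needed here is also in line with the paper's argument.
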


\begin{proof}
For $a,b,c \in \Sigma$ and $w \in \Sigma^{\star}$, with $n=\ell(w)$, put
\begin{displaymath}
X_{a,b}^c(w) = \sum_{i=0}^n \nu_{a,c}(w[1,i]) \nu_{c,b}(w(i,n]) + \sum_{w_i=c} \nu_{a,c}(w[1,i)) \nu_{c,b}(w(i,n]).
\end{displaymath}
We must prove $X_{a,b}^c(w) = \nu_{a,b}(w)$. We proceed by induction on $n$. The case $n=0$ is clear. Thus suppose $n \ge 1$ and the identity holds for smaller $n$.

In the definition of $X_{a,b}^c(w)$, break off the $i=n$ terms from each sum, and then apply the recursive definition of $\nu$ to the second factors in the sum. This yields
\begin{align*}
X_{a,b}^c(w) =& \nu_{a,c}(w) +  \eta_{c,b}(w_n) \sum_{i=0}^{n-1} \nu_{a,c}(w[1,i]) \nu_{c,w_n}(w[i,n)) \\ &+ \delta_{w_n,c} \nu_{a,c}(w[1,n)) + \eta_{c,b}(w_n) \sum_{1 \le i \le n-1,w_i=c} \nu_{a,c}(w[1,i)) \nu_{c,w_n}(w(i,n)),
\end{align*}
and so
\begin{displaymath}
X_{a,b}^c(w) = \nu_{a,c}(w) + \delta_{w_n,c} \nu_{a,c}(w[1,n)) + \eta_{c,b}(w_n) X^c_{a,w_n}(w[1,n))
\end{displaymath}
The final $X$ factor on the right is equal to $\nu_{a,w_n}(w[1,n))$ by the inductive hypothesis. Applying the definition of $\nu$ to the first term above, and replacing $c$ with $w_n$ in the second term (which is valid due to the Kronecker delta), we thus find
\begin{align*}
X_{a,b}^c(w)
&= \eta_{a,c}(w_n) \nu_{a,w_n}(w[1,n)) + \delta_{w_n,c} \nu_{a,w_n}(w[1,n)) + \eta_{c,b}(w_n) \nu_{a,w_n}(w[1,n)) \\
&= (\eta_{a,c}(w_n) + \delta_{w_n,c} + \eta_{c,b}(w_n)) \nu_{a,w_n}(w[1,n)) \\
&= \eta_{a,b}(w_n) \nu_{a,w_n}(w[1,n)) = \nu_{a,b}(w_n).
\end{align*}
In the penultimate step, we applied \eqref{eq:symb-2}, and in the final step the definition of $\nu$. This completes the proof.
\end{proof}

\subsection{Symbols and bisection structures}

We now relate $\Sigma$-symbols to the oriented bisection structures introduced in \S \ref{ss:orbi}. We use somewhat different conventions here, though. We will take our structures valued in $\{0,1\}$ instead of $\{\pm\}$. We will also extend them by zero to the diagonal. Thus an oriented bisection structure is a function
\begin{displaymath}
S \colon \Sigma \times \Sigma \to \{0,1\}
\end{displaymath}
satisfying condition $(\ast)$ of \S \ref{ss:orbi}, and $S_{a,a}=0$ for all $a \in \Sigma$; here, and in what follows, we write $S_{a,b}$ for the value of $S$ at $(a,b)$.

\begin{proposition} \label{prop:digraph}
Let $S$ be an oriented bisection structure on $\Sigma$. Define $\eta^S \colon \Sigma^3 \to k$ by
\begin{displaymath}
\eta^S_{a,b}(c) = S_{c,a} - S_{c,b} - \delta_{b,c}
\end{displaymath}
Then $\eta^S$ is a $\Sigma$-symbol. If the ring $k$ is connected then every $\Sigma$-symbol $\eta$ has the form $\eta^S$ for a unique $S$.
\end{proposition}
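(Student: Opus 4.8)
The plan is to verify the two symbol axioms for $\eta^S$ by brute-force case analysis using condition $(\ast)$, and then to reverse-engineer $S$ from a given symbol $\eta$ over a connected ring.

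First I would check axiom \eqref{eq:symb-2}. This is essentially immediate from the formula: $\eta^S_{a,d}(c)+\eta^S_{d,b}(c)+\delta_{c,d}=(S_{c,a}-S_{c,d}-\delta_{d,c})+(S_{c,d}-S_{c,b}-\delta_{b,c})+\delta_{c,d}$, and the $S_{c,d}$ and $\delta_{c,d}$ terms cancel, leaving $S_{c,a}-S_{c,b}-\delta_{b,c}=\eta^S_{a,b}(c)$. No hypothesis on $S$ is needed here; this is just telescoping.

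The main work is axiom \eqref{eq:symb-1}, which reads $\eta^S_{a,b}(c)\,\eta^S_{c,b}(d)=\eta^S_{a,b}(d)\,\eta^S_{a,d}(c)$. Substituting the formula, the left side is $(S_{c,a}-S_{c,b}-\delta_{b,c})(S_{d,c}-S_{d,b}-\delta_{b,d})$ and the right side is $(S_{d,a}-S_{d,b}-\delta_{b,d})(S_{c,a}-S_{c,d}-\delta_{d,c})$. I would dispose of the degenerate cases first: if any two of $a,b,c,d$ coincide, the identity can be checked directly (e.g. $a=b$ makes both sides manipulate into comparable forms; $c=d$ is symmetric; $a=c$, $b=d$, etc. each reduce the number of free $\{0,1\}$-variables to a small finite check). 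When $a,b,c,d$ are pairwise distinct, all Kronecker deltas vanish and the claim becomes $(S_{c,a}-S_{c,b})(S_{d,c}-S_{d,b})=(S_{d,a}-S_{d,b})(S_{c,a}-S_{c,d})$. Here the relevant instances of $(\ast)$ are: applied to $\{a,b,c\}$, at most one of $S_{a,b}=S_{a,c}$, $S_{b,a}=S_{b,c}$, $S_{c,a}=S_{c,b}$ fails; and applied to $\{b,c,d\}$ and to $\{a,c,d\}$, similarly. Rather than a blind $2^{6}$-case table over the six bits $S_{c,a},S_{c,b},S_{d,c},S_{d,b},S_{d,a},S_{c,d}$, I would note that each factor on each side is in $\{-1,0,1\}$, so a product is nonzero only when both factors are nonzero, and analyze when $S_{c,a}=S_{c,b}$ versus $S_{c,a}\neq S_{c,b}$, etc., using $(\ast)$ to rule out the bad combinations. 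I expect this case analysis to be the main obstacle --- not conceptually hard, but the place where the argument is most easily botched --- and it is worth organizing it around the bisection-structure interpretation: $\eta^S_{a,b}(c)=0$ precisely when $c$ ``does not separate'' $a$ from $b$ (when $b\neq c$), i.e. when $R_c(a,b)$ holds in the underlying bisection structure $R$, and otherwise $\eta^S_{a,b}(c)=\pm 1$; rephrasing the identity in those terms should make the needed applications of $(\ast)$ transparent.

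For the converse, suppose $k$ is connected and $\eta$ is a $\Sigma$-symbol. Setting $b=c$ in \eqref{eq:symb-2} gives $\eta_{a,c}(c)=\eta_{a,d}(c)+\eta_{d,c}(c)+\delta_{c,d}$; I would first extract from \eqref{eq:symb-1}, with suitable substitutions (e.g.\ $a=d$ or $b=c$), that the quantities $\eta_{a,b}(c)$ are idempotent-like enough to force, over a connected ring, that each is one of a small set of values, ultimately of the form $S_{c,a}-S_{c,b}-\delta_{b,c}$ for $S_{c,a}\in\{0,1\}$. Concretely: define $S_{a,b}$ (for $a\neq b$) so that it matches the value dictated by $\eta$ --- for instance one can read $S_{c,a}-S_{c,b}$ off from $\eta_{a,b}(c)$ when $b\neq c$, and pin down the individual $S_{c,a}$ by using a third color or the relation $\eta_{a,b}(c)+\eta_{b,a}(c)+\dots$ obtained by symmetrizing \eqref{eq:symb-2}; connectedness is used to conclude an element satisfying $x^2=x$ (or $x(x-1)=0$) is $0$ or $1$, so that $S$ really takes values in $\{0,1\}$. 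Then one checks $S_{a,a}=0$ and that condition $(\ast)$ holds --- this is forced by \eqref{eq:symb-1} applied to the distinct-elements case, run in reverse. Uniqueness of $S$ is clear since $\eta^S$ determines $S_{c,a}-S_{c,b}$ for all $a,b,c$ with $b\neq c$, and setting $a=b$ would be degenerate, so instead one fixes any reference color and recovers all values of $S$; over a connected ring the recovered data is unambiguous. The hard part of the converse is the connectedness bookkeeping, i.e.\ verifying that the relations genuinely force the $\eta$-values into the two-valued range predicted by oriented bisection structures, rather than into some idempotent in between.
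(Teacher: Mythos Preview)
Your forward direction is essentially the paper's proof. The telescoping for \eqref{eq:symb-2} is identical, and for \eqref{eq:symb-1} the paper also proceeds by case analysis, though it only splits on coincidences among $b,c,d$: the letter $a$ enters only through $S_{c,a}$ and $S_{d,a}$ with no Kronecker $\delta_{a,-}$, so casing on $a$ is unnecessary. In the generic case the paper uses precisely the instances of $(\ast)$ you name.

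Your converse plan, however, is missing the key simplification and as written has a gap. You propose to read off only the differences $S_{c,a}-S_{c,b}$ from $\eta_{a,b}(c)$ (with $a,b,c$ distinct) and then ``pin down the individual $S_{c,a}$ by using a third color''; but differences only determine $S_{c,-}$ up to a constant, a third color does not exist when $\lvert\Sigma\rvert\le 2$, and your uniqueness argument (``$\eta^S$ determines $S_{c,a}-S_{c,b}$'') suffers the same defect. The paper instead evaluates the symbol at an \emph{endpoint}: it sets $S_{a,b}=-\eta_{a,b}(a)$ for $a\neq b$. Everything then falls out of three one-line substitutions. From \eqref{eq:symb-2} with $a=b=d$ one gets $\eta_{a,a}(c)=-\delta_{a,c}$; then \eqref{eq:symb-1} with $c=d=a$ gives $\eta_{a,b}(a)^2=\eta_{a,b}(a)\,\eta_{a,a}(a)=-\eta_{a,b}(a)$, so $-\eta_{a,b}(a)$ is idempotent and hence in $\{0,1\}$ by connectedness. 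Condition $(\ast)$ is exactly $\eta_{c,b}(a)\,\eta_{c,a}(b)=0$ for $a\neq b$, which is \eqref{eq:symb-1} with $(a,b,c,d)\mapsto(c,b,b,a)$ (the other side is $\eta_{c,b}(b)\,\eta_{b,b}(a)=0$). Finally \eqref{eq:symb-2} with $d=c$ rearranges to $\eta_{a,b}(c)=\eta_{c,b}(c)-\eta_{c,a}(c)-\delta_{c,a}$, which is literally $\eta^S_{a,b}(c)$. So the formula $S_{a,b}=-\eta_{a,b}(a)$ is the whole trick; once you have it there is no bookkeeping left.
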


Recall that $k$ is \defn{connected} if it has exactly two idempotents, namely 0 and~1; in particular, this means $1 \ne 0$ in $k$. We break the proof into several lemmas. In the first two, $S$ denotes an oriented bisection structure on $\Sigma$.

\begin{lemma}
$\eta^S$ satisfies \eqref{eq:symb-1}.
\end{lemma}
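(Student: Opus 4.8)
The plan is to verify \eqref{eq:symb-1} by a direct computation organized around a reformulation of condition $(\ast)$. First I would record the basic facts about $\eta^S$: since $S$ is $\{0,1\}$-valued with $S_{x,x}=0$, one checks immediately that $\eta^S_{a,b}(c)=S_{c,a}-S_{c,b}-\delta_{b,c}$ always lies in $\{-1,0,1\}$ (the value $-2$ would force $b=c$ and $S_{c,b}=1$, contradicting $S_{c,c}=0$), that $\eta^S_{a,a}(c)=-\delta_{a,c}$, and that for pairwise distinct $a,b,c$ one has $\eta^S_{a,b}(c)=S_{c,a}-S_{c,b}$. Feeding the last fact into $(\ast)$ gives the key reformulation: \emph{for pairwise distinct $a,b,c$, at most one of $\eta^S_{b,c}(a)$, $\eta^S_{a,c}(b)$, $\eta^S_{a,b}(c)$ is nonzero} --- informally, among the three points of $\{a,b,c\}$, at most one ``separates'' the other two under $S$.

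The heart of the argument is the case where $a,b,c,d$ are pairwise distinct. Suppose the left side $\eta^S_{a,b}(c)\,\eta^S_{c,b}(d)$ is nonzero. Then $S_{c,a}\neq S_{c,b}$ and $S_{d,c}\neq S_{d,b}$; the latter says $\eta^S_{b,c}(d)\neq 0$, so the reformulation applied to $\{b,c,d\}$ forces $S_{c,b}=S_{c,d}$, hence also $S_{c,a}\neq S_{c,d}$, i.e.\ $\eta^S_{a,d}(c)\neq 0$; the reformulation applied to $\{a,c,d\}$ then forces $S_{d,a}=S_{d,c}$, hence $S_{d,a}\neq S_{d,b}$, i.e.\ $\eta^S_{a,b}(d)\neq 0$. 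With these two equalities, $S_{c,a}-S_{c,d}=S_{c,a}-S_{c,b}$ and $S_{d,a}-S_{d,b}=S_{d,c}-S_{d,b}$, so both sides of \eqref{eq:symb-1} equal $(S_{c,a}-S_{c,b})(S_{d,c}-S_{d,b})$. Conversely, if the left side vanishes, a short argument of the same flavour (splitting on which factor is zero and deriving a contradiction either from two ``separating'' points in $\{b,c,d\}$ or from $S_{d,a}=S_{d,b}$) shows the right side vanishes too, so the two sides agree in every subcase.

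It remains to treat the configurations in which two of $a,b,c,d$ coincide. If $a=b$ then both sides are products involving $\eta^S_{a,a}(x)=-\delta_{a,x}$, and a direct check according to whether $c$ or $d$ equals $a$ finishes it. Otherwise, assuming $a\neq b$, each of the cases $c=d$, $b=c$, $b=d$, $a=c$, $a=d$ causes one side to simplify, and equality reduces to the vanishing of a product of two quantities in $\{-1,0,1\}$. That vanishing follows either from the reformulation of $(\ast)$ applied to whichever triple inside $\{a,b,c,d\}$ is genuinely distinct, or, when the relevant indices coincide and no such triple is available, from the elementary identity $(u-v)(u+v-1)=0$ for $u,v\in\{0,1\}$ (for instance $\eta^S_{a,b}(b)=S_{b,a}-1$ times $S_{b,a}$ is $0$). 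I would close by checking that these cases exhaust all coincidence patterns.

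I expect the main obstacle to be purely organizational: enumerating the degenerate coincidence patterns without overlap or omission, and in each one correctly identifying which instance of $(\ast)$ --- or which small-$\Sigma$ degeneracy, where $(\ast)$ is vacuous --- does the work. Once the $(\ast)$-reformulation is isolated as above, none of the individual verifications is difficult.
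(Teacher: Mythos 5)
Your argument is correct, and it is the same basic strategy as the paper's: a case analysis on coincidences among the letters, powered by condition $(\ast)$ and the fact that $S$ is $\{0,1\}$-valued (your identity $(u-v)(u+v-1)=0$ for $u,v\in\{0,1\}$ is exactly what the paper uses in its case $c=d\ne b$, and your $a=b$ observation via $\eta^S_{a,a}(c)=-\delta_{a,c}$ matches Lemma~\ref{lem:digraph-3}(a)). Two organizational remarks, since you flag the enumeration of coincidence patterns as the main obstacle. First, the only Kronecker deltas in \eqref{eq:symb-1} are $\delta_{c,d}$, $\delta_{c,b}$, $\delta_{d,b}$ --- the letter $a$ never sits in a delta --- so the paper splits only on coincidences among $b,c,d$ (five cases) and lets $a$ be arbitrary in the generic case; your separate cases $a=b$, $a=c$, $a=d$ are then unnecessary, because when $a$ collides with $c$ or $d$ the relevant difference $S_{d,a}-S_{d,c}$ or $S_{c,a}-S_{c,d}$ vanishes for free and the generic computation goes through. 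Second, in the generic case the paper replaces your nonvanishing analysis (deduce $S_{c,b}=S_{c,d}$ and $S_{d,a}=S_{d,c}$ from the hypothesis that a factor is nonzero, then argue the converse separately) by a one-line algebraic substitution: since $(\ast)$ gives $(S_{c,d}-S_{c,b})(S_{d,c}-S_{d,b})=0$, one may replace $S_{c,b}$ by $S_{c,d}$ in the left side, and symmetrically $S_{d,a}$ by $S_{d,c}$ in the right side, after which the two sides are literally identical; this handles the zero and nonzero situations uniformly and dispenses with your ``short argument of the same flavour'' for the converse. Neither change is mathematically substantive --- your proof is complete once the degenerate cases are written out --- but together they cut the bookkeeping roughly in half.
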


\begin{proof}
Let $a,b,c,d \in \Sigma$. We show
\begin{displaymath}
\eta^S_{a,b}(c) \eta^S_{c,b}(d) = \eta^S_{a,b}(d) \eta^S_{a,d}(c)
\end{displaymath}
There are three Kronecker $\delta$'s appearing in the above equation, namely, $\delta_{c,d}$, $\delta_{c,b}$, and $\delta_{d,b}$. We proceed in cases to handle the possible values of these.

\textit{Case 1: $b$, $c$, and $d$ are distinct.} The identity is
\begin{displaymath}
(S_{c,a}-S_{c,b})(S_{d,c}-S_{d,b}) = (S_{d,a}-S_{d,b})(S_{c,a}-S_{c,d}).
\end{displaymath}
By $(\ast)$, we have $(S_{c,d}-S_{c,b})(S_{d,c}-S_{d,b})=0$, and so
\begin{displaymath}
(S_{c,a}-S_{c,b})(S_{d,c}-S_{d,b}) = (S_{c,a}-S_{c,b})(S_{d,c}-S_{d,b})
\end{displaymath}
Similarly, we have
\begin{displaymath}
(S_{d,a}-S_{d,b})(S_{c,a}-S_{c,d}) = (S_{d,c}-S_{d,b})(S_{c,a}-S_{c,d}).
\end{displaymath}
We have thus established the identity.

\textit{Case 2: $c=d=b$.} The identity becomes
\begin{displaymath}
(-1)(S_{c,a}-1) = (S_{c,a}-1)^2,
\end{displaymath}
which is true since $S_{c,a}-1$ is either 0 or $-1$.

\textit{Case 3: $c=d$ and $c \ne b$.} The identity becomes
\begin{displaymath}
(S_{c,a}-S_{c,b})(-S_{c,b}) = (S_{c,a}-S_{c,b})(S_{c,a}-1)
\end{displaymath}
This is equivalent to
\begin{displaymath}
(S_{c,a}-S_{c,b})(S_{c,a}+S_{c,b}-1) = 0.
\end{displaymath}
If $S_{c,a}$ and $S_{c,b}$ coincide then the first factor vanishes; otherwise, one is~0 and one is~1, and so their sum is~1 and the second factor vanishes.

\textit{Case 4: $c=b$ and $c \ne d$.} Then $\eta^S_{c,b}(d)=0$, and the identity becomes
\begin{displaymath}
(S_{d,a}-S_{d,c})(S_{c,a}-S_{c,d})=0.
\end{displaymath}
This follows from $(\ast)$.

\textit{Case 5: $d=b$ and $c \ne d$.} The identity becomes
\begin{displaymath}
(S_{c,a}-S_{c,d})(S_{d,c}-1) = (S_{d,a}-1)(S_{c,a}-S_{c,d}),
\end{displaymath}
which is equivalent to
\begin{displaymath}
(S_{c,a}-S_{c,d})(S_{d,c}-S_{d,a})=0.
\end{displaymath}
This follows from $(\ast)$.
\end{proof}

\begin{lemma}
$\eta^S$ satisfies \eqref{eq:symb-2}.
\end{lemma}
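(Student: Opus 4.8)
The plan is to verify the identity
\[
\eta^S_{a,b}(c) = \eta^S_{a,d}(c) + \eta^S_{d,b}(c) + \delta_{c,d}
\]
by simply substituting the defining formula $\eta^S_{x,y}(z) = S_{z,x} - S_{z,y} - \delta_{y,z}$ and checking that everything cancels. The left side is $S_{c,a} - S_{c,b} - \delta_{b,c}$. The right side is
\[
(S_{c,a} - S_{c,d} - \delta_{d,c}) + (S_{c,d} - S_{c,b} - \delta_{b,c}) + \delta_{c,d}.
\]
The two $S_{c,d}$ terms cancel, and the $-\delta_{d,c}$ from the first bracket cancels with the $+\delta_{c,d}$ at the end (these are the same Kronecker delta). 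What remains on the right is $S_{c,a} - S_{c,b} - \delta_{b,c}$, which matches the left side exactly.

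The key point is that this identity is a formal consequence of the shape of the formula alone: it does not use condition $(\ast)$ at all, nor the hypothesis $S_{a,a}=0$. So the proof is a one-line computation. I would write it as a short displayed chain of equalities, being careful to note that $\delta_{d,c} = \delta_{c,d}$ so the cancellation is legitimate. There is no real obstacle here — unlike the verification of \eqref{eq:symb-1}, which required a case analysis on the coincidences among $b,c,d$ and genuine use of $(\ast)$, this one is purely algebraic bookkeeping.

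Concretely, the proof I would write is:

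\begin{proof}
Let $a,b,c,d \in \Sigma$. Using the definition of $\eta^S$ and the fact that $\delta_{d,c}=\delta_{c,d}$, we compute
\begin{align*}
\eta^S_{a,d}(c) + \eta^S_{d,b}(c) + \delta_{c,d}
&= (S_{c,a} - S_{c,d} - \delta_{d,c}) + (S_{c,d} - S_{c,b} - \delta_{b,c}) + \delta_{c,d} \\
&= S_{c,a} - S_{c,b} - \delta_{b,c} \\
&= \eta^S_{a,b}(c),
\end{align*}
which is exactly \eqref{eq:symb-2}.
\end{proof}

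If the paper prefers not to call out $\delta_{d,c}=\delta_{c,d}$ explicitly one could instead just write the deltas in a symmetric form, but I would include the remark for clarity. After this lemma, the remaining work for Proposition~\ref{prop:digraph} is to show that, over a connected ring $k$, every $\Sigma$-symbol $\eta$ arises as $\eta^S$ for a unique oriented bisection structure $S$ — that is where one recovers $S_{a,b}$ from $\eta$ (e.g.\ via $S_{a,b} = \delta_{a,b} - \eta_{b,a}(b)$ or a similar specialization, using connectedness to force the recovered values into $\{0,1\}$ and $(\ast)$ to follow from \eqref{eq:symb-1}), but that is the content of the subsequent lemmas rather than of this one.
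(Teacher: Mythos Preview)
Your proof is correct and essentially identical to the paper's: both substitute the definition of $\eta^S$ into the right-hand side of \eqref{eq:symb-2} and observe that the $S_{c,d}$ and $\delta_{c,d}$ terms cancel, leaving $S_{c,a}-S_{c,b}-\delta_{b,c}=\eta^S_{a,b}(c)$. Your observation that neither $(\ast)$ nor $S_{a,a}=0$ is needed here is also accurate.
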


\begin{proof}
Let $a,b,c,d \in \Sigma$. We have
\begin{align*}
& \eta^S_{a,d}(c)+\eta^S_{d,b}(c)+\delta_{c,d} \\
=& (S_{c,a} - S_{c,d} - \delta_{c,d}) + (S_{c,d}-S_{c,b} - \delta_{c,b}) + \delta_{c,d} \\
=& S_{c,a} - S_{c,b} - \delta_{c,b} = \eta^S_{a,b}(c).
\end{align*}
This completes the proof.
\end{proof}

\begin{lemma} \label{lem:digraph-3}
Let $\eta$ be a $\Sigma$-symbol and let $a,b,c \in \Sigma$.
\begin{enumerate}
\item We have $\eta_{a,a}(c)=-\delta_{a,c}$.
\item We have $\eta_{a,b}(c) = \eta_{c,b}(c)-\eta_{c,a}(c)-\delta_{c,a}$.
\item If $a \ne b$ then $\eta_{a,b}(a)^2 = -\eta_{a,b}(a)$.
\item If $a \ne b$ then $\eta_{c,b}(a) \eta_{c,a}(b)=0$.
\end{enumerate}
\end{lemma}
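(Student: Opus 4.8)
The plan is to derive all four identities by specializing the two defining relations \eqref{eq:symb-1} and \eqref{eq:symb-2}, and to establish them in the order (a), (b), (c), (d): part (b) will use (a), and the proofs of (c) and (d) will use (a) as well. There is no induction or case analysis involved; the whole content lies in choosing the right substitutions.

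For part (a), I would set $b = d = a$ in \eqref{eq:symb-2}, obtaining $\eta_{a,a}(c) = \eta_{a,a}(c) + \eta_{a,a}(c) + \delta_{c,a}$; subtracting $\eta_{a,a}(c)$ from both sides gives $\eta_{a,a}(c) = -\delta_{a,c}$. In particular $\eta_{a,a}(a) = -1$ and $\eta_{a,a}(c) = 0$ for $c \neq a$, facts used repeatedly below. For part (b), the first step is to put $d = c$ in \eqref{eq:symb-2}, which gives $\eta_{a,b}(c) = \eta_{a,c}(c) + \eta_{c,b}(c) + 1$; it then remains to rewrite $\eta_{a,c}(c)$ in terms of $\eta_{c,a}(c)$. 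For that I would apply \eqref{eq:symb-2} a second time, with its four arguments specialized to $(a,a,c,c)$, producing $\eta_{a,a}(c) = \eta_{a,c}(c) + \eta_{c,a}(c) + 1$; combined with part (a) this yields $\eta_{a,c}(c) = -\delta_{a,c} - 1 - \eta_{c,a}(c)$. Substituting back and simplifying gives exactly $\eta_{a,b}(c) = \eta_{c,b}(c) - \eta_{c,a}(c) - \delta_{c,a}$.

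For parts (c) and (d) I turn to the multiplicative relation \eqref{eq:symb-1}. For (c), specialize $(a,b,c,d)$ in \eqref{eq:symb-1} to $(a,b,a,a)$: the left side becomes $\eta_{a,b}(a)^2$ and the right side becomes $\eta_{a,b}(a)\,\eta_{a,a}(a) = -\eta_{a,b}(a)$ by part (a), so $\eta_{a,b}(a)^2 = -\eta_{a,b}(a)$ (in fact with no hypothesis on $a,b$). For (d), specialize $(a,b,c,d)$ in \eqref{eq:symb-1} to $(c,b,b,a)$: the right side is precisely $\eta_{c,b}(a)\,\eta_{c,a}(b)$, while the left side is $\eta_{c,b}(b)\,\eta_{b,b}(a)$; when $a \neq b$, part (a) gives $\eta_{b,b}(a) = -\delta_{a,b} = 0$, so the left side vanishes and hence so does $\eta_{c,b}(a)\,\eta_{c,a}(b)$.

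I do not expect any genuine obstacle. The only mildly non-obvious moves are the auxiliary computation of $\eta_{a,c}(c)$ inside the proof of (b), which requires applying \eqref{eq:symb-2} in a second, different specialization, and the choice of specialization $(c,b,b,a)$ in (d), where one must notice that part (a) collapses one factor on the left-hand side of \eqref{eq:symb-1} to zero. Everything else is routine bookkeeping with Kronecker deltas.
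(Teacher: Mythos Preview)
Your proposal is correct and essentially matches the paper's proof: parts (a), (c), and (d) use exactly the same specializations of \eqref{eq:symb-1} and \eqref{eq:symb-2} as the paper does. The only difference is in part (b): you reach the identity via two applications of \eqref{eq:symb-2} together with part (a), whereas the paper gets it in a single step by substituting $(a,b,c,d)\to(c,b,c,a)$ in \eqref{eq:symb-2}, which directly yields $\eta_{c,b}(c)=\eta_{c,a}(c)+\eta_{a,b}(c)+\delta_{c,a}$ and hence the claim upon rearranging.
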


\begin{proof}
(a) By \eqref{eq:symb-2}, we have $\eta_{a,a}(c)=2\eta_{a,a}(c)+\delta_{a,c}$.

(b) By \eqref{eq:symb-2}, we have $\eta_{c,b}(c) = \eta_{c,a}(c)+\eta_{a,b}(c)+\delta_{a,c}$.

(c) By \eqref{eq:symb-1}, we have $\eta_{a,b}(a) \eta_{a,b}(a) = \eta_{a,b}(a) \eta_{a,a}(a)$, and $\eta_{a,a}(a)=-1$ by (a).

(d) By \eqref{eq:symb-1}, the product in question is $\eta_{c,b}(b) \eta_{b,b}(a)$, which vanishes by (a).
\end{proof}

\begin{lemma} \label{lem:digraph-4}
Let $\eta$ be a $\Sigma$-symbol valued in a connected ring $k$. Then there exists a unique oriented bisection structure $S$ such that $\eta=\eta^S$.
\end{lemma}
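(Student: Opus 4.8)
The plan is to pin down the only possible $S$ from the formula $\eta^S_{a,b}(c)=S_{c,a}-S_{c,b}-\delta_{b,c}$, verify that this $S$ is legitimate, and then check it recovers $\eta$. Setting $a=c$ in that formula shows that if $\eta=\eta^S$ then $\eta_{a,b}(a)=-S_{a,b}$ for $a\ne b$; this already forces uniqueness and tells us how to define $S$. So I would set $S_{a,a}=0$ and $S_{a,b}=-\eta_{a,b}(a)$ for $a\ne b$. The first thing to check is that $S$ is $\{0,1\}$-valued: by Lemma~\ref{lem:digraph-3}(c), $\eta_{a,b}(a)^2=-\eta_{a,b}(a)$, so $e:=-\eta_{a,b}(a)$ satisfies $e^2=e$, and since $k$ is connected $e\in\{0,1\}$.

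Next I would prove, as an identity about the function $S$ just defined (before knowing it is an oriented bisection structure), that $\eta_{a,b}(c)=S_{c,a}-S_{c,b}-\delta_{b,c}$ for all $a,b,c$; call this $(\dagger)$. Using Lemma~\ref{lem:digraph-3}(a), which in particular gives $\eta_{c,c}(c)=-1$, one checks that $S_{c,a}=-\eta_{c,a}(c)-\delta_{a,c}$ holds for \emph{every} $a$, including $a=c$. Substituting this and the analogous expression for $S_{c,b}$ into the right-hand side of $(\dagger)$ and cancelling reduces $(\dagger)$ to exactly Lemma~\ref{lem:digraph-3}(b).

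The step with real content is verifying condition $(\ast)$ of \S\ref{ss:orbi}, so that $S$ is genuinely an oriented bisection structure and $\eta^S$ is defined. Given distinct $x,y,z$, I would apply Lemma~\ref{lem:digraph-3}(d) with the subscript pair drawn from two of them and the argument equal to the third; by $(\dagger)$ this reads $(S_{a,c}-S_{a,b})(S_{b,c}-S_{b,a})=0$ for distinct $a,b,c$. Running it over the three cyclic choices shows that the three ``defects'' $p=S_{x,y}-S_{x,z}$, $q=S_{y,x}-S_{y,z}$, $r=S_{z,x}-S_{z,y}$ --- each lying in $\{-1,0,1\}$ and each vanishing exactly when the corresponding equality of $(\ast)$ holds --- satisfy $pq=pr=qr=0$. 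Since a nonzero element of $\{-1,0,1\}$ is a unit in a connected ring, at most one of $p,q,r$ can be nonzero, i.e.\ at most one equality of $(\ast)$ fails. Once $S$ is known to be an oriented bisection structure, $(\dagger)$ says precisely $\eta=\eta^S$, and uniqueness was noted above. The main obstacle is this $(\ast)$ verification; everything else is bookkeeping with the identities of Lemma~\ref{lem:digraph-3} together with the meaning of ``connected.''
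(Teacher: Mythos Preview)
Your proof is correct and follows essentially the same route as the paper's: define $S_{a,b}=-\eta_{a,b}(a)$, use Lemma~\ref{lem:digraph-3}(c) with connectedness to get $S\in\{0,1\}$, use Lemma~\ref{lem:digraph-3}(b) to get $\eta=\eta^S$, and use Lemma~\ref{lem:digraph-3}(d) to verify $(\ast)$. The only differences are cosmetic ordering (you establish $(\dagger)$ before $(\ast)$, the paper after) and that you spell out the cyclic-permutation-and-units step for $(\ast)$ explicitly, whereas the paper leaves it implicit in the phrase ``we must show $(S_{a,b}-S_{a,c})(S_{b,a}-S_{b,c})=0$.''
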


\begin{proof}
Let $a \ne b$ be elements of $\Sigma$. By Lemma~\ref{lem:digraph-3}(c), we see that $-\eta_{a,b}(a)$ is an idempotent of $k$. Since $k$ is connected, it follows that it is either~0 or~1. Define $S$ by $S_{a,b}=-\eta_{a,b}(a)$ for $a \ne b$, and $S_{a,a}=0$. Since $-\eta_{a,a}(a)=1$ by Lemma~\ref{lem:digraph-3}(a), we see that $S_{a,b}=-\eta_{a,b}(a)-\delta_{a,b}$ is valid for all $a,b \in \Sigma$.

We now verify that $S$ satisfies condition $(\ast)$. Let $a$, $b$, and $c$ be distinct elements of $\Sigma$. We must show
\begin{displaymath}
(S_{a,b}-S_{a,c})(S_{b,a}-S_{b,c})=0.
\end{displaymath}
Up to signs, the first factor is $\eta_{c,b}(a)$ and the second is $\eta_{c,a}(b)$. The product of these vanishes by Lemma~\ref{lem:digraph-3}(d), and so the claim follows. It now follows from Lemma~\ref{lem:digraph-3}(b) that $\eta=\eta^S$. Since $S$ can be recovered from $\eta^S$ (as $1 \ne 0$ in $k$), uniqueness of $S$ follows.
\end{proof}

\subsection{Proof of Theorem~\ref{mainthm}}

Let $T$ be directed $\Sigma$-labeled tree. This induces an oriented bisection structure on $\Sigma$ (\S \ref{ss:bisect-defn}) and thus, by the above results, a $\bZ$-valued measure for $G$, i.e., a ring homomorphism $\mu_T \colon \Theta(G) \to \bZ$. Consider the product of these measures
\begin{displaymath}
\phi \colon \Theta(G) \to \prod_T \bZ.
\end{displaymath}
We must show that $\phi$ is an isomorphism. For any ring $k$, there is an induced map
\begin{displaymath}
\phi^* \colon \Hom(\prod_T \bZ, k) \to \Hom(\Theta(G), k),
\end{displaymath}
where the $\Hom$'s are taken in the category of rings. By the results of this section (and Theorem~\ref{thm:bisect}), $\phi^*$ is bijective if $k$ is a connected ring. It follows that $\phi^*$ is also bijective if $k$ is a finite product of connected rings. It thus suffices to show that $\Theta(G)$ is such a product, for then Yoneda's lemma will show that $\phi$ is an isomorphism.

We now show that $\Theta(G)$ is a finite product of connected rings. It is equivalent to show that $\Theta(G)$ has finitely many idempotents, and for this it is sufficient to show that $\Theta(G)$ has finitely many minimal primes. Suppose $\fp$ is a minimal prime. Since $\Theta(G)/\fp$ is a connected ring, the quotient map $\Theta(G) \to \Theta(G)/\fp$ factors through some $\mu_T$. We thus find $\ker(\mu_T) \subset \fp$, and so $\fp=\ker(\mu_T)$ by the minimality of $\fp$. Hence $\Theta(G)$ has finitely many minimal primes.

\subsection{Description of measures}

Let $T$ be a directed $\Sigma$-labeled tree and let $\nu$ be the corresponding $\bZ$-valued $\Sigma$-measure. We now explain how to compute $\nu$ directly from $T$. Let $S$ be the oriented bisection structure associated to $T$; we use the convention that $S_{a,b}$ is~1 if $a$ points towards $b$, and~0 if $a$ points away from $b$. Let $\eta=\eta^S$ be the symbol asociated to $S$ .

Let $a,b \in \Sigma$ and let $w \in \Sigma^{\star}$ be of length $n$. Put $w_0=a$ and $w_{n+1}=b$. We say that $(a,b,w)$ is \defn{monotonic} if for all $0 \le i<j<l \le n+1$ with $w_i$, $w_j$, and $w_{\ell}$ distinct, we have that $w_j$ belongs to the shortest path joining $w_i$ and $w_l$. This means that each $w_i$ lies on the shortest path between $a$ and $b$ (inclusive), and that as we go from $w_i$ to $w_{i+1}$ we either stay at the same edge or move closer to $b$ along this path.

We say that an edge on the path from $a$ to $b$ is \defn{positively oriented} if it points away from $a$ and towards $b$; otherwise we say that it is \defn{negatively oriented}. We also apply this terminology to $a$ and $b$ themselves: $a$ is positively oriented if it points towards $b$, and $b$ is positively oriented if it points away from $a$. (If $a=b$ then $a$ is considered positively oriented.)

We say that $(a,b,w)$ is \defn{good} if it is monotonic and $w_i$ is positively oriented whenever $w_i$ occurs more than once in $w_0 \cdots w_{n+1}$. Assuming $(a,b,w)$ is good, we put $\epsilon_{a,b}(w)=(-1)^m$, where $m$ is the number of $w_i$'s, for $1 \le i \le n$, that are positively oriented. The following is our main result:

\begin{proposition} \label{prop:explicit}
If $(a,b,w)$ is good then $\nu_{a,b}(w)=\epsilon_{a,b}(w)$; otherwise $\nu_{a,b}(w)=0$.
\end{proposition}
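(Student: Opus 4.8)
The plan is to induct on the length $n=\ell(w)$ of $w$, using the recursion $\nu_{a,b}(w_1\cdots w_n)=\eta_{a,b}(w_n)\,\nu_{a,w_n}(w_1\cdots w_{n-1})$ that reconstructs a $\Sigma$-measure from its symbol $\eta=\eta^S$. Throughout I set $w_0=a$ and $w_{n+1}=b$, and abbreviate $c=w_n$, $w'=w_1\cdots w_{n-1}$, so that $\nu_{a,b}(w)=\eta_{a,b}(c)\,\nu_{a,c}(w')$ and, by induction, $\nu_{a,c}(w')$ is $\epsilon_{a,c}(w')$ when $(a,c,w')$ is good and $0$ otherwise. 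The base case $n=0$ is immediate: $\nu_{a,b}(\emptyset)=1$, and $(a,b,\emptyset)$ is always good with $\epsilon_{a,b}(\emptyset)=1$ (using the convention that $a$ is positively oriented when $a=b$).

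The first step is to rewrite $\eta^S_{a,b}(c)=S_{c,a}-S_{c,b}-\delta_{b,c}$ in terms of the tree $T$, using the orientation conventions fixed just above the proposition. Deleting the edge $c$ from $T$ yields at most two components; reading off which of them contains $a$ and which contains $b$, one checks directly that $\eta^S_{a,b}(c)=0$ \emph{unless} $c$ lies on the geodesic path from $a$ to $b$ and, in case $c\in\{a,b\}$, the endpoint $c$ is positively oriented; and that when $\eta^S_{a,b}(c)\neq 0$ its value is $-1$ if $c$ is positively oriented on that path and $+1$ if $c$ is negatively oriented interior to it. In particular a negatively oriented \emph{endpoint} contributes $0$ rather than $+1$; this asymmetry is what will make the edge cases delicate.

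The heart of the argument is the combinatorial claim that \emph{$(a,b,w)$ is good if and only if $(a,c,w')$ is good and $\eta_{a,b}(c)\neq 0$, and that when these hold one has $\eta_{a,b}(c)\cdot\epsilon_{a,c}(w')=\epsilon_{a,b}(w)$}. Granting it, the proposition drops out of the inductive hypothesis: if $(a,b,w)$ is good then $\nu_{a,b}(w)=\eta_{a,b}(c)\,\nu_{a,c}(w')=\eta_{a,b}(c)\,\epsilon_{a,c}(w')=\epsilon_{a,b}(w)$, whereas if $(a,b,w)$ is not good then either $(a,c,w')$ is not good, so $\nu_{a,c}(w')=0$, or $\eta_{a,b}(c)=0$; either way $\nu_{a,b}(w)=0$. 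To prove the claim I would rely on the tree fact that \emph{if $c$ lies on the path from $a$ to $b$, then the path from $a$ to $c$ is an initial segment of the path from $a$ to $b$}; consequently every edge on the path from $a$ to $c$ — as well as the endpoint conventions at $a$ and at $c$ — is positively oriented "toward $c$" exactly when it is positively oriented "toward $b$", and the notions of monotonicity for the two triples agree along $w_0,\dots,w_n$. Since passing from $w_0\cdots w_{n+1}$ to $w_0\cdots w_n$ only deletes occurrences of letters, the implication "$(a,b,w)$ good $\Rightarrow$ $(a,c,w')$ good" is routine, and the positive-orientation proviso at the endpoints is precisely what forces $\eta_{a,b}(c)\neq 0$ in that case (note that $c=w_n$ recurs as $w_0$ or as $w_{n+1}$ when $c\in\{a,b\}$). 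Conversely, appending $b$ at the end can only spoil goodness via a fresh repeated occurrence of $b$ in $w_0\cdots w_n$; if such an occurrence were some $w_i$ with $1\le i\le n-1$ and $b\notin\{a,c\}$, then $b$ would lie on the path from $a$ to $c$ while $c$ lies on the path from $a$ to $b$, which is impossible in a tree — and the remaining cases $b=c$ and $b=a$ are controlled by $\eta_{a,b}(c)\neq 0$. Finally, the sign identity reduces to the observation that $\epsilon_{a,b}(w)/\epsilon_{a,c}(w')$ equals $(-1)^{\iota}$ with $\iota=1$ if $c$ is positively oriented toward $b$ and $\iota=0$ otherwise — the letters $w_1,\dots,w_{n-1}$ contribute identically to the two products — and this agrees with $\eta_{a,b}(c)$ by the first step.

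I expect the main obstacle to be the book-keeping inside this combinatorial claim: reconciling the "occurs more than once" clause in the definition of \emph{good} with the asymmetric vanishing of $\eta$ at the endpoints, which is exactly what happens when $a$ or $b$ recurs among the letters of $w$ or when $w_n$ coincides with $a$ or $b$. Isolating the initial-segment tree fact (and the incompatibility "$b$ strictly inside the path to $c$ while $c$ is on the path to $b$") and checking the short list of endpoint cases against it is the crux; everything else is bookkeeping along the recursion.
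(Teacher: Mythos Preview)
Your approach is correct but genuinely different from the paper's. The paper does \emph{not} induct directly on the recursion $\nu_{a,b}(w)=\eta_{a,b}(w_n)\,\nu_{a,w_n}(w')$. Instead it first proves a symmetric product formula
\[
\nu_{a,b}(w)=\prod_{i=1}^n \eta_{w_{i-1},w_{i+1}}(w_i)
\]
(Lemma~\ref{lem:explicit-2}), whose derivation requires the auxiliary algebraic identity $\eta_{a,c}(b)\,\eta_{a,d}(c)=\eta_{a,c}(b)\,\eta_{b,d}(c)$ (Lemma~\ref{lem:explicit-1}). It then proves a divisibility statement: $\eta_{w_i,w_l}(w_j)\mid \nu_{a,b}(w)$ for \emph{every} triple $i<j<l$ (Lemma~\ref{lem:explicit-4}). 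Vanishing in the non-good case is then immediate, since any failure of monotonicity or any negatively oriented repeated letter produces a triple with $\eta_{w_i,w_l}(w_j)=0$; the sign in the good case is read off factor by factor from the product formula using the explicit values of $\eta$ (Lemma~\ref{lem:explicit-3}, which is your first step).

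Your route trades the product formula and the divisibility lemma for the single combinatorial equivalence ``$(a,b,w)$ good $\Leftrightarrow$ $(a,w_n,w')$ good and $\eta_{a,b}(w_n)\ne 0$'' together with the matching sign identity. This is more elementary---no Lemma~\ref{lem:explicit-1} or~\ref{lem:explicit-4}---but forces the endpoint case analysis you flag (when $w_n\in\{a,b\}$, and when $b$ recurs among $w_0,\dots,w_n$). The paper's route is cleaner in that the divisibility lemma handles all failures of goodness uniformly, without singling out the last letter; yours has the advantage of treating the vanishing and the sign in one stroke. One small point worth tightening in your write-up: you say ``appending $b$ at the end can only spoil goodness via a fresh repeated occurrence of $b$,'' but you should also note explicitly that monotonicity for the new triples with $l=n+1$ follows from the initial-segment fact (since each $w_j$ with $j\le n$ lies on the path $a\to c$ and $c$ lies on the path $a\to b$, one gets $w_j$ between $w_i$ and $b$); you invoke this fact earlier but don't quite close the loop at that spot.
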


We require a few lemmas before giving the proof.

\begin{lemma} \label{lem:explicit-1}
For $a,b,c,d \in \Sigma$, we have
\begin{displaymath}
\eta_{a,c}(b) \eta_{a,d}(c) = \eta_{a,c}(b) \eta_{b,d}(c).
\end{displaymath}
\end{lemma}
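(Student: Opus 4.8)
The statement to prove is the identity $\eta_{a,c}(b) \eta_{a,d}(c) = \eta_{a,c}(b) \eta_{b,d}(c)$ for a $\Sigma$-symbol $\eta$. Since this is a multiplicative identity with the common factor $\eta_{a,c}(b)$, the natural approach is to rewrite the right-hand side using the defining relations \eqref{eq:symb-1} and \eqref{eq:symb-2}, aiming to produce a correction term that is annihilated by $\eta_{a,c}(b)$. Concretely, by \eqref{eq:symb-2} applied with the pair of ``endpoints'' being $(a,b)$ (splitting at $a$, say), we can expand $\eta_{b,d}(c)$ in terms of $\eta_{a,d}(c)$ plus lower-order pieces, and then the claim reduces to showing $\eta_{a,c}(b)$ kills those pieces.

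\textbf{Key steps.} First I would apply \eqref{eq:symb-2} in the form that relates $\eta_{b,d}(c)$ and $\eta_{a,d}(c)$: namely $\eta_{a,d}(c) = \eta_{a,b}(c) + \eta_{b,d}(c) + \delta_{c,b}$, so that $\eta_{b,d}(c) = \eta_{a,d}(c) - \eta_{a,b}(c) - \delta_{c,b}$. Substituting this into the right-hand side, the desired identity becomes
\begin{displaymath}
\eta_{a,c}(b)\,\eta_{a,d}(c) = \eta_{a,c}(b)\bigl(\eta_{a,d}(c) - \eta_{a,b}(c) - \delta_{c,b}\bigr),
\end{displaymath}
which simplifies to $\eta_{a,c}(b)\bigl(\eta_{a,b}(c) + \delta_{c,b}\bigr) = 0$. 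Thus the whole problem reduces to the single identity $\eta_{a,c}(b)\,\eta_{a,b}(c) = -\delta_{c,b}\,\eta_{a,c}(b)$. Now I would handle the two cases $c = b$ and $c \ne b$ separately. If $c \ne b$, the right-hand side is zero, and $\eta_{a,c}(b)\,\eta_{a,b}(c) = 0$ follows directly from Lemma~\ref{lem:digraph-3}(d) (with its roles of $a,b,c$ matched appropriately: that lemma says $\eta_{c',b'}(a')\eta_{c',a'}(b') = 0$ when $a' \ne b'$, so here with $c' = a$, $a' = b$, $b' = c$ we get exactly $\eta_{a,b}(c)\eta_{a,c}(b) = 0$). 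If $c = b$, then using Lemma~\ref{lem:digraph-3}(a) we have $\eta_{a,b}(b) = -\delta_{a,b}$... actually I should instead use \eqref{eq:symb-1} directly: $\eta_{a,c}(b)\eta_{a,b}(c)$ with $c = b$ is $\eta_{a,b}(b)\eta_{a,b}(b) = \eta_{a,b}(b)^2$, and by Lemma~\ref{lem:digraph-3}(c) (when $a \ne b$) this equals $-\eta_{a,b}(b) = -\delta_{c,b}\eta_{a,c}(b)$ as needed; when $a = b$ all terms collapse via Lemma~\ref{lem:digraph-3}(a). So both cases close.

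\textbf{Main obstacle.} The only real subtlety is bookkeeping: making sure the correct instance of \eqref{eq:symb-2} is used (there is a choice of which endpoint to ``split through,'' and only one choice produces the common factor $\eta_{a,c}(b)$ cleanly), and correctly matching the index patterns of Lemma~\ref{lem:digraph-3}(c),(d) — which are stated with a specific ordering of arguments — to the expression at hand. There is also the mild annoyance of the degenerate cases where some of $a,b,c,d$ coincide, but these are all dispatched immediately by Lemma~\ref{lem:digraph-3}(a). No genuinely hard estimate or construction is involved; the proof is a two-line reduction followed by citing the earlier lemmas. An alternative route, if the reduction above turns out to be more delicate than expected, would be to substitute the formula $\eta = \eta^S$ from Proposition~\ref{prop:digraph} — but that only works over connected rings, so the algebraic manipulation above is preferable as it works over any $k$.
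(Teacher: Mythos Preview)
Your reduction is exactly the paper's: rewrite the claim as $\eta_{a,c}(b)\bigl(\eta_{a,b}(c)+\delta_{b,c}\bigr)=0$ via \eqref{eq:symb-2}, then split on $b=c$ versus $b\ne c$. Two small points are worth flagging.

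First, a citation slip in the $b=c$ case: Lemma~\ref{lem:digraph-3}(c) asserts $\eta_{a,b}(a)^2=-\eta_{a,b}(a)$, not the identity $\eta_{a,b}(b)^2=-\eta_{a,b}(b)$ that you actually need. The latter is true and follows by the same one-line trick (set $c=d=b$ in \eqref{eq:symb-1} and use Lemma~\ref{lem:digraph-3}(a) to evaluate $\eta_{b,b}(b)=-1$), so the argument survives, but the reference as written does not literally apply.

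Second, a contextual remark: in this subsection $\eta$ is not an arbitrary $\Sigma$-symbol but specifically $\eta=\eta^S$ for an oriented bisection structure $S$, and the paper's own proof exploits this. For $b\ne c$ the paper invokes condition $(\ast)$ directly (since $\eta^S_{a,c}(b)\eta^S_{a,b}(c)=(S_{b,a}-S_{b,c})(S_{c,a}-S_{c,b})$), and for $b=c$ it simply observes $\eta^S_{a,b}(b)=S_{b,a}-1\in\{-1,0\}$. Your route through Lemma~\ref{lem:digraph-3}(d) and the analogue of (c) is a genuine alternative that works for any $\Sigma$-symbol over any ring, so your closing remark has it backwards: the paper takes the ``substitute $\eta=\eta^S$'' path, and it is your argument that is the more general one.
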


\begin{proof}
The stated equation is equivalent to
\begin{displaymath}
\eta_{a,c}(b) (\eta_{a,d}(c)-\eta_{b,d}(c)) = 0.
\end{displaymath}
We have
\begin{displaymath}
\eta_{a,d}(c)-\eta_{b,d}(c) = \eta_{a,b}(c) + \delta_{b,c},
\end{displaymath}
and so we must show
\begin{displaymath}
\eta_{a,c}(b) (\eta_{a,b}(c)+\delta_{b,c}) = 0.
\end{displaymath}
If $b \ne c$ this follows from $(\ast)$, while if $b=c$ it follows since $\eta_{a,b}(b) \in \{-1,0\}$.
\end{proof}

\begin{lemma} \label{lem:explicit-2}
For $a,b \in \sigma$ and $w \in \Sigma^{\star}$ of length $n$, we have
\begin{displaymath}
\nu_{a,b}(w) = \prod_{i=1}^n \eta_{w_{i-1},w_{i+1}}(w_i),
\end{displaymath}
where we put $w_0=a$ and $w_{n+1}=b$.
\end{lemma}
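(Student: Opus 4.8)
The plan is to induct on the length $n$ of $w$, stripping off the \emph{first} letter and using Lemma~\ref{lem:explicit-1} to repair the resulting product. For $n=0$ both sides equal $1$, and for $n=1$ we have $\nu_{a,b}(w_1)=\eta_{a,b}(w_1)$ by the recursive definition of $\nu$ (equivalently, because $\eta$ is obtained from $\nu$ by restriction to length-one words), which is exactly $\eta_{w_0,w_2}(w_1)$ under the convention $w_0=a$, $w_2=b$.

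Now let $n\ge 2$ and assume the formula for all shorter words. Applying Definition~\ref{defn:Sigma-meas}(b) with the word ``$w$'' there taken to be empty, $c=w_1$, and $w'=w_2\cdots w_n$ gives
\[
\nu_{a,b}(w_1\cdots w_n)=\nu_{a,b}(w_2\cdots w_n)\,\nu_{a,w_2}(w_1)=\nu_{a,b}(w_2\cdots w_n)\,\eta_{a,w_2}(w_1).
\]
The inductive hypothesis applied to $w_2\cdots w_n$ (a word of length $n-1$, whose left endpoint is $a$ and whose right endpoint is $b$) expands $\nu_{a,b}(w_2\cdots w_n)$ as a product over its letters; keeping the convention $w_0=a$, $w_{n+1}=b$, the factor attached to its first letter $w_2$ is $\eta_{a,w_3}(w_2)$ and the remaining factors are $\eta_{w_{i-1},w_{i+1}}(w_i)$ for $3\le i\le n$. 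Hence
\[
\nu_{a,b}(w_1\cdots w_n)=\eta_{a,w_2}(w_1)\,\eta_{a,w_3}(w_2)\prod_{i=3}^{n}\eta_{w_{i-1},w_{i+1}}(w_i).
\]
The only discrepancy with the target formula is the second factor: we have $\eta_{a,w_3}(w_2)$ where we want $\eta_{w_1,w_3}(w_2)$. But the instance of Lemma~\ref{lem:explicit-1} obtained by setting its four parameters to $a,w_1,w_2,w_3$ reads exactly $\eta_{a,w_2}(w_1)\,\eta_{a,w_3}(w_2)=\eta_{a,w_2}(w_1)\,\eta_{w_1,w_3}(w_2)$. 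Substituting this in, and recalling $w_0=a$ so that $\eta_{a,w_2}(w_1)=\eta_{w_0,w_2}(w_1)$, we obtain $\nu_{a,b}(w_1\cdots w_n)=\prod_{i=1}^n\eta_{w_{i-1},w_{i+1}}(w_i)$, completing the induction.

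The only real care needed is in the bookkeeping of the endpoint conventions, so that the first factor produced by the inductive hypothesis lines up with the left-hand side of Lemma~\ref{lem:explicit-1}; when $n=2$ the product $\prod_{i=3}^{n}$ is empty and the ``first factor of the inductive hypothesis'' is $\eta_{a,b}(w_2)=\eta_{a,w_3}(w_2)$ with $w_3=w_{n+1}=b$, so the argument goes through verbatim. An essentially equivalent alternative is to iterate Definition~\ref{defn:Sigma-meas}(b) directly to get $\nu_{a,b}(w)=\prod_{i=1}^n\eta_{a,w_{i+1}}(w_i)$ and then apply Lemma~\ref{lem:explicit-1} to consecutive pairs of factors, working from $i=n$ down to $i=2$, to replace each leading subscript $a$ by $w_{i-1}$; no step here is more than a direct computation, so there is no genuine obstacle.
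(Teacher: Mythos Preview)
Your proof is correct and is essentially identical to the paper's own argument: both induct on $n$, strip off the first letter via Definition~\ref{defn:Sigma-meas}(b) with $(w,c,w')\to(\emptyset,w_1,w_2\cdots w_n)$, apply the inductive hypothesis, and then invoke Lemma~\ref{lem:explicit-1} with $(a,b,c,d)\to(a,w_1,w_2,w_3)$ to convert the factor $\eta_{a,w_3}(w_2)$ into $\eta_{w_1,w_3}(w_2)$. Your added remarks on the $n=2$ bookkeeping and the alternative ``work down from $i=n$'' variant are fine but not needed.
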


\begin{proof}
Write $\nu'_{a,b}(w)$ for the right side above. We show $\nu_{a,b}(w)=\nu'_{a,b}(w)$ by induction on the length $n$ of $w$. If $n \le 1$, the statement is clear. Now assume $n \ge 2$. We have
\begin{align*}
\nu_{a,b}(w)
&= \eta_{a,w_2}(w_1) \nu_{a,b}(w_2 \cdots w_n) \\
&= \eta_{a,w_2}(w_1) \eta_{a,w_3}(w_2) \eta_{w_2,w_4}(w_3) \cdots \\
&= \eta_{a,w_2}(w_1) \eta_{w_1,w_3}(w_2) \eta_{w_2,w_4}(w_3) \cdots = \nu'_{a,b}(w).
\end{align*}
In the first step, we used Definition~\ref{defn:Sigma-meas}(b) with $(w,c,w') \to (\emptyset, w_1, w[2,n])$; in the second step, we used the inductive hypothesis; and in the third step we applied Lemma~\ref{lem:explicit-1} to the first two factors, with $(a,b,c,d) \to (a,w_1,w_2,w_3)$. The result thus follows.
\end{proof}

\begin{lemma} \label{lem:explicit-3}
Let $a,b \in \Sigma$ be distinct. Then
\begin{displaymath}
\eta_{a,b}(a) = \begin{cases} -1 & \text{if $a$ points towards $b$} \\
0 & \text{otherwise} \end{cases} \qquad
\eta_{a,b}(b) = \begin{cases} -1 & \text{if $b$ points away from $a$} \\
0 & \text{otherwise} \end{cases}
\end{displaymath}
Let $a,b,c \in \Sigma$ be distinct. Then
\begin{displaymath}
\eta_{a,b}(c) = \begin{cases} -1 & \text{if $c$ is between $a$ and $b$, and points towards $b$} \\
1 & \text{if $c$ is between $a$ and $b$, and points towards $a$} \\
0 & \text{otherwise} \end{cases}
\end{displaymath}
\end{lemma}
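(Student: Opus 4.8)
The plan is to unwind the definition $\eta=\eta^S$ from Proposition~\ref{prop:digraph} and then read off the values of $S$ directly from the directed tree $T$. Recall that $\eta_{a,b}(c)=S_{c,a}-S_{c,b}-\delta_{b,c}$, that $S_{x,x}=0$, and (from \S\ref{ss:orbi}, with the $\{0,1\}$ convention fixed just before the lemma) that $S_{x,y}=1$ precisely when the edge $x$ points toward the component of $T\setminus\{x\}$ containing $y$.

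For the first two formulas I would simply specialize $c$. Taking $c=a$ with $a\ne b$ gives $\eta_{a,b}(a)=S_{a,a}-S_{a,b}-\delta_{b,a}=-S_{a,b}$; since $S_{a,b}=1$ exactly when $a$ points toward $b$, this is the asserted value. Taking $c=b$ gives $\eta_{a,b}(b)=S_{b,a}-S_{b,b}-\delta_{b,b}=S_{b,a}-1$, which is $0$ when $b$ points toward $a$ and $-1$ when $b$ points away from $a$; this again matches.

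For the third formula, with $a,b,c$ distinct the Kronecker delta vanishes and $\eta_{a,b}(c)=S_{c,a}-S_{c,b}$. Here I would invoke the geodesic description of $R^T$ from \S\ref{ss:bisect-defn}: two edges lie in the same component of $T\setminus\{c\}$ if and only if the geodesic joining them avoids $c$, i.e., $c$ is not between them. Thus if $c$ is not between $a$ and $b$, then $a$ and $b$ lie in the same component, so $S_{c,a}=S_{c,b}$ and $\eta_{a,b}(c)=0$. If $c$ is between $a$ and $b$, then $a$ and $b$ lie in opposite components of $T\setminus\{c\}$, so exactly one of $S_{c,a}$ and $S_{c,b}$ equals $1$: it is $S_{c,b}$ when $c$ points toward $b$, giving $\eta_{a,b}(c)=-1$, and it is $S_{c,a}$ when $c$ points toward $a$, giving $\eta_{a,b}(c)=+1$. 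This is precisely the stated case analysis, and it matches the ``positively/negatively oriented'' terminology introduced just before the lemma.

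The lemma is pure bookkeeping, so I do not expect any real obstacle; the only thing requiring care is keeping the two sign conventions straight — the one for $S$ (where $S_{x,y}=1$ means ``$x$ points toward $y$'') and the asymmetry built into $\eta^S$ (whose leading term is $S_{c,a}$, not $S_{c,b}$) — so that the difference between the $\eta_{a,b}(a)$ and $\eta_{a,b}(b)$ formulas comes out on the correct side. It is also worth noting that a single deleted edge always disconnects a tree, so the equivalence ``$S_{c,a}\ne S_{c,b}$ iff $c$ is between $a$ and $b$'' holds with no exceptional small cases.
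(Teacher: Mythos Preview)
Your proof is correct and follows exactly the same approach as the paper: unwind the definition $\eta^S_{a,b}(c)=S_{c,a}-S_{c,b}-\delta_{b,c}$ and read off the values of $S$ from the tree. The paper in fact only writes out the third case explicitly and dismisses the first two as ``direct computation,'' so your version is a bit more detailed but otherwise identical.
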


\begin{proof}
These follow from direct computation. We explain the final formula. We have
\begin{displaymath}
\eta_{a,b}(c) = S_{c,a}-S_{c,b}
\end{displaymath}
since $\delta_{b,c}=0$ by assumption. The edge $c$ is between $a$ and $b$, i.e., on the shortest path joining $a$ and $b$, if and only if $S_{c,a} \ne S_{c,b}$. Thus $\eta_{a,b}(c)=0$ unless $a \le c \le b$. Assume that $a \le c \le b$. If $c$ points towards $a$ then $S_{c,a}=1$ and $S_{c,b}=0$; thus $\eta_{a,b}(c)=1$ in this case. Similarly, if $c$ points towards $b$ then $\eta_{a,b}(c)=-1$.
\end{proof}

\begin{lemma} \label{lem:explicit-4}
Let $a,b \in \Sigma$ and let $w \in \Sigma^{\star}$ have length $n$. Then for $0 \le i<j<l \le n+1$ we have that $\eta_{w_i,w_l}(w_j)$ divides $\nu_{a,b}(w)$. Here we put $w_0=a$ and $w_{n+1}=b$.
\end{lemma}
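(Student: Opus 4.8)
\emph{The plan} is to first reduce to the special case $i=0$, $l=n+1$, and then to prove that case by induction on $n=\ell(w)$, using the recursion that defines $\nu$ together with the quadratic relation \eqref{eq:symb-1}. For the reduction: applying Lemma~\ref{lem:explicit-2} to the subword $w[i+1,l-1]$, whose endpoint convention makes its $0$th letter $w_i$ and its last letter $w_l$ and whose letter in position $t-i$ is $w_t$, gives
\[
\nu_{w_i,w_l}(w[i+1,l-1])=\prod_{t=i+1}^{l-1}\eta_{w_{t-1},w_{t+1}}(w_t).
\]
Since $i+1\ge 1$ and $l-1\le n$, the right-hand side is a sub-product of the factorization $\nu_{a,b}(w)=\prod_{t=1}^{n}\eta_{w_{t-1},w_{t+1}}(w_t)$ furnished by the same lemma, hence divides $\nu_{a,b}(w)$. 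As $i<j<l$ we have $1\le j-i\le l-1-i$, so it suffices to show $\eta_{w_i,w_l}(w_j)$ divides $\nu_{w_i,w_l}(w[i+1,l-1])$; that is, it suffices to treat the case $i=0$, $l=n+1$.

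For that special case I would prove $\eta_{a,b}(w_j)\mid\nu_{a,b}(w)$ for all $1\le j\le n$ by induction on $n$. The base case $n=1$ is immediate since $\nu_{a,b}(w_1)=\eta_{a,b}(w_1)$. For $n\ge 2$ I would use the defining recursion $\nu_{a,b}(w)=\eta_{a,b}(w_n)\,\nu_{a,w_n}(w[1,n-1])$: when $j=n$ the claim is clear, and when $j<n$ the inductive hypothesis applied to $w[1,n-1]$ (which has endpoints $a$ and $w_n$) yields $\eta_{a,w_n}(w_j)\mid\nu_{a,w_n}(w[1,n-1])$, while the instance of \eqref{eq:symb-1} obtained via the substitution $(a,b,c,d)\to(a,b,w_j,w_n)$, namely
\[
\eta_{a,b}(w_j)\,\eta_{w_j,b}(w_n)=\eta_{a,b}(w_n)\,\eta_{a,w_n}(w_j),
\]
shows that $\eta_{a,b}(w_j)$ divides $\eta_{a,b}(w_n)\,\eta_{a,w_n}(w_j)$, hence divides $\eta_{a,b}(w_n)\,\nu_{a,w_n}(w[1,n-1])=\nu_{a,b}(w)$, closing the induction.

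Once the reduction to ``full endpoints'' is made, the argument is essentially index bookkeeping, and I do not expect a genuine obstacle. The only point that takes a moment is recognizing which symbol identity bridges the induction: the recursion for $\nu$ naturally peels off the factor $\eta_{a,w_n}(w_j)$ rather than the target $\eta_{a,b}(w_j)$, and \eqref{eq:symb-1} in the displayed form is exactly what converts the former divisibility into the latter. (Peeling off the first letter of $w$ through Definition~\ref{defn:Sigma-meas}(b) would work equally well, the same identity then being needed to handle $j=1$.)
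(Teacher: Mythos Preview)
Your argument is correct. Both the reduction (via the product formula of Lemma~\ref{lem:explicit-2}, whose factors over $[i+1,l-1]$ are literally a sub-product of those over $[1,n]$) and the induction (with \eqref{eq:symb-1} instantiated at $(a,b,c,d)\to(a,b,w_j,w_n)$ to convert the divisor $\eta_{a,w_n}(w_j)$ into $\eta_{a,b}(w_j)$) go through without issue.

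The paper proceeds differently: it never reduces to the full-endpoint case and never invokes \eqref{eq:symb-1}. Instead it uses only the multiplicativity relation of Definition~\ref{defn:Sigma-meas}(b). For the consecutive case $j=i+1$, $l=i+2$, that relation exhibits $\eta_{w_i,w_{i+2}}(w_{i+1})$ directly as a factor of $\nu_{a,b}(w)$; otherwise one deletes a letter strictly between $w_i$ and $w_l$ (say $w_{i+1}$ when $j>i+1$, or symmetrically $w_{l-1}$ when $l>j+1$), obtaining a shorter word whose $\nu$-value divides $\nu_{a,b}(w)$ and to which induction applies. Your route trades that repeated peeling for one global use of Lemma~\ref{lem:explicit-2} plus one use of the symbol identity; the paper's route is slightly more self-contained (it needs neither Lemma~\ref{lem:explicit-2} nor \eqref{eq:symb-1}), while yours makes the link to the symbol axioms more explicit.
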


\begin{proof}
If $j=i+1$ and $l=i+2$ this follows from Definition~\ref{defn:Sigma-meas}(b) with $(w,c,w') \to (w[1,i), w_i, w(i,n])$.  Suppose now that $j>i+1$. Then by Definition~\ref{defn:Sigma-meas} with $(w,c,w') \to (w[1,i], w_{i+1}, w[i+2,n])$, we see that $\nu_{a,b}(w[1,i] w[i+2,n])$ divides $\nu_{a,b}(w)$. The former is divisible by $\eta_{w_i,w_l}(w_j)$ by induction, which completes the proof.
\end{proof}

\begin{proof}[Proof of Proposition~\ref{prop:explicit}]
Suppose $(a,b,w)$ is not good; we show that $\nu_{a,b}(w)=0$. First suppose $(a,b,w)$ is not monotonic. Then there exists $0 \le i<j<l \le n+1$ with $w_i$, $w_j$, and $w_l$ distint such that $w_j$ is not between $w_i$ and $w_l$. By Lemma~\ref{lem:explicit-3}, we have $\eta_{w_i,w_l}(w_j)=0$. By Lemma~\ref{lem:explicit-4}, this symbol divides $\nu_{a,b}(w)$, and so $\nu_{a,b}(w)=0$. Next suppose that there is a letter $c$ occurring more than once in $w_0 \cdots w_{n+1}$ that is negatively oriented. Let $i<j$ be such that $w_i=w_j=c$. If $c \ne a$ then Lemma~\ref{lem:explicit-3} shows that $\eta_{a,w_j}(w_i)=0$, while if $c \ne b$ then the same result shows that $\eta_{w_i,b}(w_j)=0$. By Lemma~\ref{lem:explicit-4}, these symbols divide $\nu_{a,b}(w)$, and so this vanishes as well.

Now suppose that $(a,b,w)$ is good. Let $1 \le i \le n$. If $w_{i-1}=w_i=w_{i+1}$ then $\eta_{w_{i-1},w_{i+1}}(w_i)=-1$. Otherwise, Lemma~\ref{lem:explicit-3} shows that $\eta_{w_{i-1},w_{i+1}}(w_i)$ is $-1$ if $w_i$ is positively oriented and $+1$ otherwise. Thus the result follows from Lemma~\ref{lem:explicit-2}.
\end{proof}

\subsection{An example} \label{ss:ex}

Let $\Sigma=\{\ra,\ldots,\rf\}$ be a six element set. Consider the following directed $\Sigma$-labeled tree $T$:
\begin{displaymath}
\begin{tikzpicture}[decoration={markings,mark=at position 0.5 with {\arrow{>}}}] 
\tikzset{leaf/.style={circle,fill=black,draw,minimum size=.75mm,inner sep=0pt}}
\node[leaf] (A) at (-1.866, .5) {};
\node[leaf] (B) at (-1.866, -.5) {};
\node[leaf] (C) at (-1,0) {};
\node[leaf] (D) at (0,0) {};
\node[leaf] (E) at (1,0) {};
\node[leaf] (F) at (1.866, .5) {};
\node[leaf] (G) at (1.866, -.5) {};
\draw[postaction={decorate}] (A) to node[align=center,xshift=1.5mm,yshift=2mm]{\tiny a} (C);
\draw[postaction={decorate}] (C) to node[align=center,xshift=1.5mm,yshift=-2mm]{\tiny b} (B);
\draw[postaction={decorate}] (C) to node[align=center,yshift=2mm]{\tiny c} (D);
\draw[postaction={decorate}] (E) to node[align=center,yshift=2mm]{\tiny d} (D);
\draw[postaction={decorate}] (E) to node[align=center,xshift=-1.5mm,yshift=2mm]{\tiny e} (F);
\draw[postaction={decorate}] (E) to node[align=center,xshift=-1.5mm,yshift=-2mm]{\tiny f} (G);
\end{tikzpicture}
\end{displaymath}
The oriented bisection structure $S$ is specified in the following table:
\begin{center}
\begin{tabular}{c|cccccc}
& a & b & c & d & e & f \\
\hline
a & 0 & 1 & 1 & 1 & 1 & 1 \\
b & 0 & 0 & 0 & 0 & 0 & 0 \\
c & 0 & 0 & 0 & 1 & 1 & 1 \\
d & 1 & 1 & 1 & 0 & 0 & 0 \\
e & 0 & 0 & 0 & 0 & 0 & 0 \\
f & 0 & 0 & 0 & 0 & 0 & 0
\end{tabular}
\end{center}
Here the row labeled ``$\ra$'' specifies the values of $S_{\ra,-}$.

Consider $\nu_{\rb,\rf}(w)$. For $(\rb,\rf,w)$ to be good, $w$ can only use the edges joining $\rb$ and $\rf$ (inclusively), i.e., $\rb$, $\rc$, $\rd$, and $\rf$, and they must appear in that order. In fact, $\rb$ can not occur in $w$ since it is negatively oriented and is one of the endpoints, and $\rd$ can only occur once. It follows that the good $w$'s in this case have the form $\rc^i \rd^j \rf^l$ where $i,l \in \bN$ and $j \in \{0,1\}$. Moreover, for such $i$, $j$, and $l$, we have
\begin{displaymath}
\nu_{\rb,\rf}(\rc^i \rd^j \rf^l) = (-1)^{i+l}.
\end{displaymath}

\section{The case of the line} \label{s:line}

Recall that $\rL=\bS \setminus \{\infty\}$ is a homogeneous structure with a total order and $\Sigma$-coloring, and its automorphism group $H$ is oligomorphic. The following theorem describes measures for $H$.

\begin{theorem}
Given a directed $\Sigma$-labeled tree $T$ and two (possibly equal) vertices $x$ and $y$ of $T$, there is an associated $\bZ$-valued measure $\mu_{T,x,y}$ for $H$. The product of these measures defines a ring isomorphism $\Theta(H) \to \prod_{T,x,y} \bZ$. In particular, $\Theta(H) \cong \bZ^M$, where $M=(2n+2)^n$ and $n=\# \Sigma$.
\end{theorem}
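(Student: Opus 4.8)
The plan is to run the four-step strategy of \S\ref{s:class} (see \S\ref{ss:plan}) again, with $H$ in place of $G$. The one structural novelty is that for a finite $A=\{a_1<\cdots<a_r\}\subset\bL$ the complement $\bL\setminus A$ has $r+1$ intervals: the $r-1$ ``middle'' intervals $(a_i,a_{i+1})$, each with two endpoints in $A$, and the two ``end'' intervals $(-\infty,a_1)$ and $(a_r,\infty)$, each with a single endpoint in $A$ (and when $A=\emptyset$ the complement is all of $\bL$). As in Propositions~\ref{prop:Ghat-sets} and~\ref{prop:minimal-class}, every transitive $H(A)$-set is a product $\prod_I I^{w_I}$ over the intervals $I$ of $\bL\setminus A$, and every minimal map deletes a single coordinate from one factor; homogeneity of $\bL$ shows that the class of $I^w$ in $\Theta(H)$ depends only on $w$ and on the colours of the endpoints of $I$ that lie in $A$. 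A measure for $H$ is therefore packaged by a $\Sigma$-measure $\nu$ (for the middle intervals) together with families $\lambda_b(w)=\mu((-\infty,b)^w)$, $\rho_a(w)=\mu((a,\infty)^w)$ and $\tau(w)=\mu(\bL^w)$; I will call such a package an \emph{$H$-measure}. Its defining relations are those of Definition~\ref{defn:Sigma-meas} for $\nu$, together with ``multiplicativity'' relations from Proposition~\ref{prop:minimal-class}(b)-type fibres --- e.g.\ $\lambda_b(wcw')=\lambda_b(ww')\,\nu_{w_r,w_1'}(c)$ and $\lambda_b(cw')=\lambda_b(w')\,\lambda_{w_1'}(c)$ (the first from a middle fibre, the second from a left-end fibre), plus the mirror statements for $\rho$ and the evident ones for $\tau$ --- and ``additivity'' relations from Lemma~\ref{lem:Sigma-meas-1} applied to an end interval, e.g.\ $\lambda_b(w)=\sum_i\lambda_c(w[1,i])\nu_{c,b}(w(i,n])+\sum_{w_i=c}\lambda_c(w[1,i))\nu_{c,b}(w(i,n])$ and its $\rho$- and $\tau$-analogues. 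Exactly as in \S\ref{ss:Sigma-meas} I would encode all this in a ring $R_H$ and prove $\Theta(H)\cong R_H$ by building the associated $\sE$-measure (with $\sE$ all $H(A)$) and its inverse, using Proposition~\ref{prop:minimal} to reduce to minimal maps.

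Second, reduce to a finite object. Iterating the multiplicativity relations shows $\nu$, $\lambda$, $\rho$ are determined by their length-one values, and then $\tau$ is forced (inserting one point into $\bL$ gives $\tau(c)=\lambda_c(c)+\rho_c(c)+1$, and in general expresses $\tau(w)$ through $\lambda,\rho,\nu$; consistency of the several expressions becomes relations among $\lambda,\rho$). By inductions patterned on the two lemmas of \S\ref{ss:Sigma-meas}, the full relation set is generated by those involving words of length $\le 2$. The resulting notion --- the analogue of Definition~\ref{defn:symbol} --- is an \emph{$H$-symbol}: a $\Sigma$-symbol $\eta$ together with functions $\lambda,\rho\colon\Sigma^2\to k$ subject to a short explicit list, the key entries being the ``$\lambda$-versions'' of \eqref{eq:symb-1}--\eqref{eq:symb-2}, namely $\lambda_b(c)\,\eta_{c,b}(d)=\lambda_b(d)\,\lambda_d(c)$ and $\lambda_b(c)=\lambda_d(c)+\eta_{d,b}(c)+\delta_{c,d}$, their mirror statements for $\rho$, and a single $\lambda$--$\rho$ compatibility coming from the two ways of computing $\tau(c)$.

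Third, the heart: solve over a connected ring $k$. Proposition~\ref{prop:digraph} already identifies $\eta$ with the symbol $\eta^S$ of an oriented bisection structure $S$ on $\Sigma$, i.e.\ of a directed $\Sigma$-labeled tree $T$. Specializing the $\lambda$-relations at $b=d=c$ gives $\lambda_c(c)^2=-\lambda_c(c)$, so $\chi(c):=\lambda_c(c)+1$ is an idempotent, hence lies in $\{0,1\}$; the $\lambda$-version of \eqref{eq:symb-2} then forces $\lambda_b(c)=\chi(c)-S_{c,b}-\delta_{b,c}$ for all $b,c$ (the same shape as $\eta^S$, with ``$c$ points toward $a$'' replaced by the value $\chi(c)$), and substituting this into the $\lambda$-version of \eqref{eq:symb-1} at $b=d$ forces $(\chi(c)-S_{c,d})(\chi(d)-S_{d,c})=0$ for all $c\ne d$. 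I would then prove a lemma in the spirit of \S\ref{s:bisect}: a function $\chi\colon\Sigma\to\{0,1\}$ satisfies this condition relative to $S=S^T$ iff $\chi=\chi_x$ for a unique vertex $x$ of $T$, where $\chi_x(c)=1$ iff edge $c$ points toward $x$ --- concretely, adjoining a formal leaf-edge $e$ with $\tilde S_{c,e}:=\chi(c)$ and $\tilde S_{e,c}:=1$ yields an oriented bisection structure on $\Sigma\sqcup\{e\}$ precisely under this condition, and by the oriented analogue of Theorem~\ref{thm:bisect} such structures with $e$ a distinguished inward leaf correspond to $T$ with a marked vertex. The same analysis applied to $\rho$ yields a second, independent vertex $y$, and the $\lambda$--$\rho$ compatibility reduces to $\tau(c)=\chi_x(c)-\chi_y(c)$ computed two ways, hence holds automatically. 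Thus $H$-symbols over connected $k$ are in natural bijection with triples $(T,x,y)$, of which there are $N_4\cdot(n+1)^2=2^n(n+1)^n=(2n+2)^n=M$ (a tree with $n$ edges has $n+1$ vertices); running the construction backwards from $(T,x,y)$ produces the measure $\mu_{T,x,y}$, i.e.\ a homomorphism $R_H\to\bZ$. Finally, globalize exactly as in the proof of Theorem~\ref{mainthm}: the above gives a bijection $\Hom(\prod_{T,x,y}\bZ,k)\cong\Hom(\Theta(H),k)$ for connected $k$, hence for finite products of connected rings, and $\Theta(H)$ is such a product (each minimal prime is the kernel of some $\mu_{T,x,y}$, so there are finitely many), so Yoneda forces $\Theta(H)\to\prod_{T,x,y}\bZ$ to be an isomorphism.

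The main obstacle is the third step. Setting up $R_H$ and reducing to $H$-symbols is bookkeeping parallel to \S\ref{ss:Sigma-meas}, with extra cases for the two kinds of end interval and for $A=\emptyset$, and the globalization is verbatim. What needs genuine work is: (a) isolating the correct finite list of $H$-symbol identities and checking it generates everything; (b) the idempotency extractions leading to $\lambda_b(c)=\chi_x(c)-S_{c,b}-\delta_{b,c}$ and its $\rho$-analogue; and above all (c) the combinatorial lemma matching admissible $\{0,1\}$-valued functions with vertices of $T$, together with the reverse direction --- verifying that a triple $(T,x,y)$ really defines a measure, for which the cleanest route is an explicit ``good configuration'' formula for $\nu,\lambda,\rho,\tau$ in terms of monotone paths among the edges of $T$ and the two marked vertices, in the style of Proposition~\ref{prop:explicit}.
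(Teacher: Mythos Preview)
Your proposal is correct and follows essentially the same strategy as the paper. The only notable difference is packaging: the paper absorbs your $\lambda,\rho,\tau$ into a single unified object by setting $\ol{\Sigma}=\Sigma\cup\{\pm\infty\}$ and defining a $\ol{\Sigma}$-measure (resp.\ $\ol{\Sigma}$-symbol) to be a rule $\nu_{a,b}(w)$ with $a\in\Sigma\cup\{-\infty\}$, $b\in\Sigma\cup\{+\infty\}$, $w\in\Sigma^{\star}$, satisfying the same axioms as before---so your $\lambda_b$, $\rho_a$, $\tau$ become $\nu_{-\infty,b}$, $\nu_{a,\infty}$, $\nu_{-\infty,\infty}$ and no separate case analysis is needed. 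Your leaf-adjunction argument for the $\chi\leftrightarrow x$ correspondence is a nice concrete justification of what the paper merely asserts (``One then shows that there is a unique vertex $x$ of $T$ such that $S_{a,\infty}$ is~1 if $a$ points to $x$\ldots''); in the paper's language this is exactly the observation that extending $S$ to $\Sigma\times\ol{\Sigma}$ subject to $(\ast)$ amounts to attaching two extra leaf-edges to $T$.
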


\begin{proof}
We simply indicate the main ideas of the proof. Put $\ol{\Sigma}=\Sigma \cup \{\pm \infty\}$. A \defn{$\ol{\Sigma}$-measure} is a rule assigning to $a \in \Sigma \cup \{-\infty\}$, $b \in \Sigma \cup \{+\infty\}$, and $w \in \Sigma^{\star}$ a value $\nu_{a,b}(w)$ in $k$ satisfying axioms similar to those in Definition~\ref{defn:Sigma-meas}. One first shows that measures for $H$ are equivalent to $\ol{\Sigma}$-measures: given a measure $\mu$ for $H$, the corresponding $\ol{\Sigma}$-measure is defined by $\nu_{a,b}(w)=\mu(I^w)$, where $I$ is an interval in $\bL$ with endpoints of type $a$ and $b$.

Next, a \defn{$\ol{\Sigma}$-symbol} is a rule assigning to $a \in \Sigma \cup \{-\infty\}$, $b \in \Sigma \cup \{+\infty\}$, and $c \in \Sigma$ a value $\eta_{a,b}(c)$ in $k$ satisfying axioms similar to those in Definition~\ref{defn:symbol}. One shows that there is a bijective correspondence between $\ol{\Sigma}$-measures and $\ol{\Sigma}$-symbols.

Suppose $\eta$ is a $\ol{\Sigma}$-symbol valued in a connected ring $k$. Define
\begin{displaymath}
S \colon \Sigma \times \ol{\Sigma} \to \{0,1\}
\end{displaymath}
as follows. For $a,b \in \Sigma$, we define $S_{a,b}$ just as before. We put $S_{a,\infty}=-\eta_{a,\infty}(a)$ and $S_{a,-\infty}(a)=\eta_{-\infty,a}(a)+1$. One shows using arguments similar to before that these values do belong to $\{0,1\}$, and that $\eta$ can be recovered from $S$. Moreover, $S$ satisfies the following condition: given $a,b \in \Sigma$ and $c \in \ol{\Sigma}$, at least one of the equalities $S_{a,b}=S_{a,c}$ or $S_{b,a}=S_{b,c}$ holds.

Restricting $S$ to $\Sigma \times \Sigma$ yields an oriented bisection structure, and thus a directed $\Sigma$-labeled tree $T$. One then shows that there is a unique vertex $x$ of $T$ such that $S_{a,\infty}$ is~1 if $a$ points to $x$, and~0 otherwise; similarly, one gets a vertex $y$ associated to $-\infty$. From this description, it is also clear how one can start with $(T,x,y)$ and then define $S$, $\eta$, $\nu$, and finally $\mu$. This is how one obtains the stated description for $\Theta(H)$.

Finally, we explain the enumeration. If $T$ is a tree with $n$ edges then $T$ has $n+1$ vertices, and so there are $(n+1)^2$ choices for $(x,y)$. Hence $M=(n+1)^2 \cdot N$, where $N$ is the number of choices for $T$. (Note that directed $\Sigma$-labeled trees have no non-trivial automorphisms.) We have already seen in Proposition~\ref{prop:enum} that $N=2^n \cdot (n+1)^{n-2}$.
\end{proof}

\end{document}